\newcommand{\ignore}[1]{}
\DeclareMathOperator{\cone}{cone}
\DeclareMathOperator{\lin}{lin}
\DeclareMathOperator{\lat}{lat}
\DeclareMathOperator{\scr}{SCR}
\DeclareMathOperator{\dij}{DIJ}
\DeclareMathOperator{\aff}{aff}
\DeclareMathOperator{\supp}{supp}
\DeclareMathOperator{\ind}{ind}
\DeclareMathOperator{\disc}{disc}
\DeclareMathOperator{\sources}{sources}
\DeclareMathOperator{\sinks}{sinks}
\newcommand{\1}{\mathbf{1}}
\newcommand{\0}{\mathbf{0}}
\newcommand{\cR}{\mathbb R}
\newcommand{\cZ}{\mathbb Z}
\newcommand{\zU}{\mathcal U}
\newcommand{\zC}{\mathcal C}
\newcommand{\zF}{\mathcal F}
\newcommand{\zE}{\mathcal E}
\newtheorem{theorem}{Theorem}[section]
\newtheorem{CO}[theorem]{Corollary}
\newtheorem{LE}[theorem]{Lemma}
\newtheorem{CN}[theorem]{Conjecture}
\newtheorem{RE}[theorem]{Remark}
\newtheorem{DE}[theorem]{Definition}
\newcounter{claim_nb}[theorem]
\newtheorem{claim}[claim_nb]{Claim}
\newtheorem*{claim*}{Claim}
\newtheorem*{subclaim*}{Subclaim}
\newcounter{claim_nbs}[section]
\newcounter{subclaim_nb}[claim_nbs]
\newenvironment{cproof}
{\begin{proof}
 [Proof of Claim.]
 \vspace{-1.2\parsep}}
{\renewcommand{\qed}{\hfill $\Diamond$} \end{proof}}
\newif\ifnotes\notesfalse 
\newcommand{\notename}[2]{{\textcolor{red}{\footnotesize{\bf (#1:} {#2}{\bf ) }}}}
\newcommand{\anote}[1]{{\notename{Ahmad}{#1}}}
\newcommand{\gnote}[1]{{\notename{Gerard}{#1}}}
\newcommand{\onote}[1]{{\notename{Olha}{#1}}}
\newcommand{\snote}[1]{{\notename{Siyue}{#1}}}
\renewcommand{\b}[1]{{\color{blue} #1}}
\renewcommand{\r}[1]{{\color{red} #1}}
\newcommand{\notename}[2]{{}}
\newcommand{\gnote}[1]{}
\newcommand{\anote}[1]{}
\newcommand{\onote}[1]{}
\newcommand{\snote}[1]{}
\renewcommand{\b}[1]{#1}
\renewcommand{\r}[1]{}
\title{Strongly connected orientations and integer lattices}
\author{Ahmad Abdi \and G\'{e}rard Cornu\'{e}jols \and Siyue Liu \and Olha Silina}
\begin{document}

\maketitle

\begin{abstract}
Let $D=(V,A)$ be a digraph whose underlying \b{undirected} graph is $2$-edge-connected, and let $P$ be the polytope whose vertices are the incidence vectors of arc sets whose reversal makes $D$ strongly connected. We study the lattice theoretic properties of the integer points contained in a proper face $F$ of $P$ not contained in $\{x:x_a=i\}$ for any $a\in A,i\in \{0,1\}$. We prove under a mild necessary condition that $F\cap \{0,1\}^A$ contains an \emph{integral basis} $B$, i.e., $B$ is linearly independent, and any integral vector in the linear hull of $F$ is an integral linear combination of $B$. This result is surprising as the integer points in $F$ do not necessarily form a Hilbert basis. In proving the result, we develop a theory similar to Matching Theory for degree-constrained dijoins in bipartite digraphs. Our result has consequences for head-disjoint strong orientations in hypergraphs, and also to a famous conjecture by Woodall that the minimum size of a dicut of $D$, say~$\tau$, is equal to the maximum number of disjoint dijoins. We prove a relaxation of this conjecture, by finding for any prime number $p\geq 2$, a $p$-adic packing of dijoins of value $\tau$ and of support size at most $2|A|$. We also prove that the all-ones vector belongs to the lattice generated by $F\cap \{0,1\}^A$, where $F$ is the face of $P$ satisfying $x(\delta^+(U))=1$ for every dicut $\delta^+(U)$ \b{with minimum size}.\\

\noindent {\bf Keywords:} strongly connected orientation, $M$-convex set, Hilbert basis, integer lattice, integral basis, Woodall's conjecture.
\end{abstract}


\section{Introduction}\label{sec:intro}

Let $D=(V,A)$ be a digraph whose underlying undirected graph is $2$-edge-connected. A \emph{strengthening set} is an arc subset $J$ such that the digraph obtained from $D$ after reversing the arcs in $J$ is strongly connected. Observe that $J\subseteq A$ is a strengthening set if, and only if, its indicator vector $x$ satisfies the following \emph{generalized set covering inequalities}: \begin{equation}\label{cut-ineq}\tag{CUT}
	\sum_{a\in \delta^+(U)}x_a + \sum_{b\in \delta^-(U)}(1-x_b)\geq 1 \quad \forall U\subset V,U\neq \emptyset.
\end{equation} In words, \eqref{cut-ineq} asks that after reversing the arcs of $J$ in $D$, every nonempty proper node subset $U$ has at least one incoming arc. Observe that \eqref{cut-ineq} can be rewritten as $x(\delta^+(U))-x(\delta^-(U))\geq 1-|\delta^-(U)|$; as the right-hand sides correspond to a crossing supermodular function, the system above may be viewed as a \emph{supermodular flow system}. Let $$
\scr(D):=[0,1]^A \cap 
\left\{
x: x \text{ satisfies \eqref{cut-ineq}}
\right\}\b{,}
$$ 
\b{where $\scr$ is an acronym for strongly connected re-orientations.}
It is well-known that $\scr(D)$ is a nonempty integral polytope, and so its vertices are precisely the indicator vectors of the strengthening sets of $D$ (\cite{Edmonds77}, see~\cite{Schrijver03}, \S60.1). This polytope and its variants have played an important role in graph orientations, combinatorial and matroid optimization; see (\cite{Schrijver03}, Chapters 60-61) and (\cite{Frank11}, Chapter 16).

In this paper, we study the lattice theoretic properties of the integer points in $\scr(D)$. Given a rational linear subspace $S\subseteq \cR^A$, an \emph{integral basis for $S$} is a subset $B\subseteq S\cap \cZ^A$ of linearly independent vectors such that every vector in $S\cap \cZ^A$ is an integral linear combination of $B$.

\begin{theorem}\label{scr-theorem}
	Let $D=(V,A)$ be a digraph whose underlying undirected graph is $2$-edge-connected. Let $\zF$ be a nonempty family over ground set $V$ such that $\emptyset,V\notin \zF$, and the following face of $\scr(D)$ is nonempty:
	$$F:=\scr(D)\cap \left\{x\in \cR^A: x(\delta^+(U))-x(\delta^-(U))=1-|\delta^-(U)|,\, \forall U\in \zF\right\}.$$
	Suppose $\gcd\{1-|\delta^-(U)|:U\in \zF\}=1$. Then $F\cap \{0,1\}^A$ contains an integral basis for $\lin(F)$. 
	\end{theorem}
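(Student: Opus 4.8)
The plan is to recast the statement as a lattice-theoretic one, isolate the (easy) point where the $\gcd$ hypothesis is used, and reduce the combinatorial core to a ``matching theory'' for degree-constrained dijoins in an auxiliary bipartite digraph. \emph{Reformulation.} Since $F$ is a face of the integral polytope $\scr(D)$, it is itself integral, so $\zS:=F\cap\{0,1\}^A$ satisfies $F=\mathrm{conv}(\zS)$ and $\lin(F)=\spn(\zS)$; hence I must show that $\Lambda:=\langle\zS\rangle_{\cZ}$ equals $\lin(F)\cap\cZ^A$ and admits a $\cZ$-basis contained in $\zS$. Fix any $x_0\in\zS$, put $L_0:=\spn\{x-x_0:x\in\zS\}$ (the direction space of $\aff(F)$, independent of $x_0$), and note $\Lambda=\cZ x_0+\Lambda^{\mathrm{diff}}$ where $\Lambda^{\mathrm{diff}}:=\langle\{x-y:x,y\in\zS\}\rangle_{\cZ}\subseteq L_0$. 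The hypothesis $\gcd\{1-|\delta^-(U)|:U\in\zF\}=1$ is used exactly here: by B\'ezout there are integers $\mu_U$ with $\sum_U\mu_U(1-|\delta^-(U)|)=1$, so the integral vector $c:=\sum_U\mu_U(\chi^{\delta^+(U)}-\chi^{\delta^-(U)})$ satisfies $\langle c,x\rangle=1$ for all $x\in\aff(F)$; thus $\langle c,\cdot\rangle$ vanishes on $L_0$ but equals $1$ at $x_0$, which forces $x_0\notin L_0$, $\lin(F)=\cR x_0\oplus L_0$, and for every $z\in\lin(F)\cap\cZ^A$ the number $m:=\langle c,z\rangle$ is an integer with $z-mx_0\in L_0\cap\cZ^A$. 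Therefore everything reduces to the \emph{key claim}: $\Lambda^{\mathrm{diff}}=L_0\cap\cZ^A$, and $\Lambda^{\mathrm{diff}}$ has a basis of the form $\{x_i-x_0\}_i$ with all $x_i\in\zS$. Granting this, $\{x_0\}\cup\{x_i-x_0\}_i$ is a basis of $\lin(F)\cap\cZ^A$, and the unimodular substitution $x_i-x_0\mapsto x_i$ turns it into a basis $\{x_0,x_1,\dots\}\subseteq\zS$, as required.

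\emph{Reduction and the bipartite model.} To prove the key claim I would first reduce to the case where no coordinate is constant on $F$ — delete arcs forced to $0$, ``pre-reverse'' and delete arcs forced to $1$, and check that a basis lifts back — and enlarge $\zF$ to the family of all nonempty proper $U$ whose constraint \eqref{cut-ineq} is tight on $F$. Since the right-hand sides of \eqref{cut-ineq} arise from a crossing supermodular function, the standard uncrossing for supermodular flows (Edmonds--Giles) replaces the tight family by a cross-free, essentially laminar, family $\mathcal{L}$ cutting out the same $\aff(F)$. The laminar structure is what produces the auxiliary \emph{bipartite} digraph: contracting the atoms of $\mathcal{L}$ and retaining only the arcs running between consecutive levels of $\mathcal{L}$, the conditions ``exactly one arc enters $U$'' for $U\in\mathcal{L}$ turn into prescribed in-/out-degrees at the contracted nodes, and the plan is to show that $\zS$ is in support- and degree-preserving bijection with the incidence vectors of the \emph{degree-constrained dijoins} of the resulting bipartite digraph $H$.

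\emph{Matching theory for degree-constrained dijoins.} The heart of the proof is then a self-contained theory, parallel to the theory of perfect matchings in bipartite graphs, for the set $\zD$ of incidence vectors of degree-constrained dijoins of a bipartite digraph. I would establish that (i) when $\zD\neq\emptyset$, $\zD$ is an $M$-convex set, so $\mathrm{conv}(\zD)$ is integral with vertex set exactly $\zD$; (ii) any two members of $\zD$ are joined by a sequence of ``alternating-circuit'' exchanges that remain in $\zD$, yielding an ear-decomposition / Dulmage--Mendelsohn-type structure; (iii) consequently the alternating-circuit vectors — each of which is a difference of two members of $\zD$ — integrally span the direction lattice of $\mathrm{conv}(\zD)$, which, transported back to $D$, is exactly $\Lambda^{\mathrm{diff}}=L_0\cap\cZ^A$; and (iv) the ear-decomposition realises such circuit vectors as successive differences $x_i-x_0$ from a fixed $x_0\in\zD$, which furnishes the basis of $\Lambda^{\mathrm{diff}}$ lying inside $\zD$. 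Note that all of this concerns \emph{signed} integer combinations, which is why it can hold even though $\zS$ need not be a Hilbert basis.

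\emph{Main obstacle.} The bulk of the work, and the part least likely to reduce to quoting matching theory, will be steps (ii)--(iv): identifying the correct notion of ``alternating circuit'' for the dijoin constraint — hitting every dicut of $H$, rather than a degree condition — proving the ear-decomposition, and showing that these circuits generate the direction lattice \emph{integrally} and not merely rationally. A useful sanity check is the bundle of three parallel arcs with the face $\{x_1+x_2+x_3=2\}$: there $\Lambda^{\mathrm{diff}}$ already equals $L_0\cap\cZ^3$, yet $\Lambda$ has index $2$ in $\lin(F)\cap\cZ^3$ because no integral functional is identically $1$ on the face — which confirms that the $\gcd$ hypothesis (handling the $x_0$-offset) and the matching-theoretic argument (handling the difference lattice) are doing genuinely separate jobs.
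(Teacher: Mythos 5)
Your preliminary reduction is clean and correct: using B\'ezout to produce an integral functional $c$ that is identically $1$ on $\aff(F)$, hence vanishes on the direction space $L_0$, and thereby reducing the theorem to two statements about the difference lattice $\Lambda^{\mathrm{diff}}$ --- that it equals $L_0\cap\cZ^A$, and that it has a basis of the form $\{x_i-x_0\}$ with $x_i\in\zS$ --- is a valid way to isolate the role of the $\gcd$ hypothesis. It also parallels the Claim in the paper's proof of \Cref{scr-theorem-extension}, where the same computation is done on $\1^\top\lambda$ rather than via a dual functional, and your three-parallel-arcs sanity check correctly illustrates why the $\gcd$ hypothesis is needed only for the $x_0$-offset.

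That said, the remainder is a program rather than a proof, and parts of the program as stated would not work. The ``key claim'' you identify is essentially all of the paper's \Cref{main-digraft}, and you have not supplied an argument for it. Concretely: your route to a bipartite digraph via uncrossing the tight family to a laminar $\mathcal{L}$ and then ``contracting the atoms of $\mathcal{L}$, retaining only the arcs between consecutive levels'' does not produce a bipartite digraph in the required sense (every node a source or a sink) --- a chain $U_1\subset U_2\subset U_3$ contracts to a directed path whose internal nodes have both in- and out-arcs --- and discarding arcs that skip levels changes the dicut structure. The paper achieves bipartiteness by a different device with no analogue in your sketch: subdividing each arc $a=(r,s)$ into $(r,t_a),(s,t_a)$ with a new sink $t_a$ (\Cref{scr->dij}), which is precisely what makes the perfect-$b$-matching and $M$-convexity arguments available. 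Your items (ii)--(iv) --- alternating circuits, an ear-decomposition, and integral (not merely rational) generation of the difference lattice --- are exactly the hard combinatorics, and the sketch does not engage with the paper's actual mechanism: decomposition along tight and facet-defining dicuts into basic digrafts, with the integer decomposition property (via de Werra) and the Gerards--Seb\H{o} IGSC theorem handling the robust base case, and the Affine Critical and Jump-Free Lemmas driving the induction. You flag this as the ``main obstacle'' yourself, and it remains unresolved, so the core of the theorem is not proved.
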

	
Above, $\lin(F)$ refers to the linear hull of $F$. It can be readily checked that the GCD condition is necessary for $F\cap \{0,1\}^A$ to contain an integral basis for $\lin(F)$. \Cref{scr-theorem} is a consequence of a more general theorem about the lattice generated by the integer points in any face of $\scr(D)$ where the GCD condition is replaced by `$1-|\delta^-(U)|\neq 0$ for some $U\in \zF$'. This theorem is stated in \S\ref{subsec:scr-theorem-proof}.

\Cref{scr-theorem} is best possible in two different ways. First, the result does not extend to faces $F$ involving both $\0\leq x\leq \1$ and \eqref{cut-ineq} inequalities. Secondly, for the face $F$ from \Cref{scr-theorem}, the integer points in $F$ do not necessarily form a \emph{Hilbert basis}, so the result cannot be strengthened in this direction either. We shall explain both of these points further in \S\ref{subsec:schrijver-example}, where we also conjecture an extension of \Cref{scr-theorem} to faces of $\scr(D)$ where some capacity constraints are also fixed to equality.

\subsection{Three applications of the main theorem}

Let us discuss some applications of our result. 

\paragraph{Woodall's conjecture.} Let $D=(V,A)$ be a digraph whose underlying undirected graph is connected. A \emph{dicut} is the set of arcs leaving a nonempty proper node subset with no incoming arc, i.e., it is of the form $\delta^+(U)$ where $U\subset V, U\neq \emptyset$ and $\delta^-(U)=\emptyset$. A \emph{dijoin} is an arc subset whose contraction makes the digraph strongly connected. Subsequently, every strengthening set is a dijoin. It can be readily checked that $J$ is a dijoin if, and only if, $J$ intersects every dicut at least once \b{if, and only if, the digraph obtained from $D$ after bidirecting the arcs in $J$ is strongly connected. See \Cref{fig:dijoin-dicut} for an illustration of these concepts.}

A famous conjecture by Douglas Woodall states that the maximum number of \b{(arc)} disjoint dijoins is equal to the minimum size \b{(i.e., number of arcs)} of a dicut~\cite{Woodall78}. This conjecture has a convenient reformulation that appears in an unpublished note by Lex Schrijver.

\begin{CN}[\cite{Schrijver-note}]\label{Woodall-CN-2}
	Let $\tau\geq 2$ be an integer, and let $D=(V,A)$ be a digraph, where every dicut has size at least $\tau$. Then $A$ can be partitioned into $\tau$ strengthening sets.
\end{CN}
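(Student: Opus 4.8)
The plan is to place the all-ones vector on a suitable face of $\scr(D)$ and apply \Cref{scr-theorem} to that face. We may assume $D$ has a dicut of size exactly $\tau$; the remaining case, where the minimum dicut size strictly exceeds $\tau$, calls for a separate reduction that I would treat afterwards. The starting point is the easy fractional form of the conjecture: $\tfrac1\tau\1\in\scr(D)$. At $x=\tfrac1\tau\1$ the left-hand side of \eqref{cut-ineq} for a nonempty proper $U$ equals $\tfrac1\tau|\delta^+(U)|+(1-\tfrac1\tau)|\delta^-(U)|$, and a one-line case analysis on $|\delta^-(U)|\in\{0,1,\geq 2\}$ — using $\tau\geq 2$ and that $\delta^+(U)$, resp.\ $\delta^+(V\setminus U)=\delta^-(U)$, is a dicut of size $\geq\tau$ when $\delta^-(U)=\emptyset$, resp.\ $\delta^+(U)=\emptyset$ — shows this is always $\geq 1$. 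Hence $\1\in\tau\cdot\scr(D)$, so $\1$ is a convex combination of total weight $\tau$ of indicator vectors of strengthening sets.

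Next, let $F$ be the minimal face of $\scr(D)$ containing $\tfrac1\tau\1$. Since every coordinate of $\tfrac1\tau\1$ lies strictly in $(0,1)$, no box inequality is tight and $F$ is cut out solely by the \eqref{cut-ineq} inequalities tight at $\tfrac1\tau\1$; writing $\zF$ for the corresponding vertex sets, equality forces $|\delta^+(U)|+(\tau-1)|\delta^-(U)|=\tau$, so $|\delta^-(U)|\leq 2$ and $\zF$ contains every minimum dicut. Consequently $\emptyset,V\notin\zF$ and $\gcd\{1-|\delta^-(U)|:U\in\zF\}=1$ (a minimum dicut contributes the value $1$), so \Cref{scr-theorem} applies: $F\cap\{0,1\}^A$ — which, because $\scr(D)\subseteq[0,1]^A$ forces its $0/1$ points to be vertices of $\scr(D)$, is exactly $\{\chi_J : J\text{ a strengthening set},\ \chi_J\in F\}$ — contains an integral basis $B$ for $\lin(F)$. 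As $\tfrac1\tau\1\in F\subseteq\lin(F)$ and $\lin(F)$ is a subspace, $\1\in\lin(F)\cap\cZ^A$, so $\1=\sum_J c_J\chi_J$ for integers $c_J$ and strengthening sets $J$ with $\chi_J\in F$. Taking the inner product of both sides with the indicator vector of a minimum dicut $\delta^+(U_0)$ — on which every such $\chi_J$ evaluates to $1$ because $F$ enforces $x(\delta^+(U_0))=1$, while $\1$ evaluates to $\tau$ — yields $\sum_J c_J=\tau$ for free. Thus $\1$ is an \emph{integral} combination of strengthening sets of total weight exactly $\tau$.

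The remaining step, which is the genuinely hard one, is to replace this signed representation by a nonnegative one, i.e., a partition $\1=\sum_{i=1}^{\tau}\chi_{J_i}$. At this point we hold two representations of $\1$ over the strengthening sets in $F$: a nonnegative rational one of total weight $\tau$ and an integral one of total weight $\tau$; the obstruction is that the lattice argument controls integrality but not signs, whereas the LP argument controls signs but not integrality, and merging the two is precisely the substance of Woodall's conjecture. I expect the decisive tool to be an augmenting-type exchange in the bipartite degree-constrained dijoin model developed in the body of the paper: starting from the integral representation above, repeatedly rotate a negative coefficient to zero along an alternating structure while staying inside $F$ and preserving the total weight $\tau$ — the analogue of alternating-path moves in Matching Theory, whose feasibility here remains open. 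Short of this, the two representations can instead be combined $p$-adically: for each prime $p\geq 2$ this yields a $p$-adic packing of dijoins of value $\tau$ supported on at most $2|A|$ strengthening sets, which is the relaxation announced in the abstract.
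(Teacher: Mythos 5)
There is a genuine gap, and it is the one you yourself flag in your last paragraph: the statement you are asked to prove, \Cref{Woodall-CN-2}, is an open conjecture (Schrijver's reformulation of Woodall's conjecture), and the paper does not prove it either — it only proves relaxations. What your argument actually establishes is essentially \Cref{ARF-partition-CO}: placing $\tfrac1\tau\1$ on a face of $\scr(D)$ cut out by the tight \eqref{cut-ineq} constraints (which include the minimum dicuts, forcing the GCD to be $1$), invoking \Cref{scr-theorem}, and pairing with a minimum dicut to get $\1=\sum_J c_J\1_J$ with $c_J\in\cZ$ and $\sum_J c_J=\tau$. That part is sound and is, in substance, the same derivation the paper gives for \Cref{ARF-partition-CO} (the paper works with the family of minimum dicuts only, you with the full minimal face; the difference is immaterial). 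Note also that your reduction to the case where a dicut of size exactly $\tau$ exists is left dangling: if the minimum dicut size exceeds $\tau$ (or $D$ has no dicut at all), your face is all of $\scr(D)$, the family $\zF$ is empty, and \Cref{scr-theorem} does not apply, so even the relaxation would need a separate argument there.

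The decisive step — upgrading the signed integral representation to a genuine partition, i.e.\ replacing $c_J\in\cZ$ by $c_J\in\cZ_{\geq 0}$ — is exactly the content of Woodall's conjecture, and your proposal does not supply it. The "augmenting-type exchange'' you sketch (rotating negative coefficients to zero along alternating structures while staying in $F$) is a hope, not an argument: the paper's exchange machinery (the $M$-convex Exchange Axiom, the Jump-Free Lemma) operates on single points of $F(D,\zF)\cap\{0,1\}^A$ and on dicut values, and gives no mechanism for repairing signs in a lattice combination; indeed the example in \S\ref{subsec:schrijver-example} shows that the integer points of such faces need not even form a Hilbert basis, so nonnegativity cannot be recovered by any general conic/lattice principle of this kind. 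So the proposal should be read as a correct proof of the relaxation \Cref{ARF-partition-CO} plus an honest statement that the conjecture itself remains open, not as a proof of \Cref{Woodall-CN-2}.
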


Note the difference between the original formulation of Woodall's conjecture and \Cref{Woodall-CN-2}. While the former is concerned with \emph{packing} dijoins, the latter seeks a \emph{partition} into strengthening sets. This subtle difference comes from the key distinction that while every superset of a dijoin is also a dijoin, a superset of a strengthening set may not remain a strengthening set. 

As a consequence of \Cref{scr-theorem}, we obtain the following relaxation of this conjecture. For a subset $J\subseteq A$, denote by $\1_J\in \{0,1\}^A$ the indicator vector of $J$. \b{A \emph{minimum dicut} is a dicut with minimum size.}

\begin{theorem}\label{ARF-partition-CO}
	Let $\tau\geq 2$ be an integer, and let $D=(V,A)$ be a digraph where the minimum size of a dicut is $\tau$. Then there exists an assignment $\lambda_J\in \cZ$ to every strengthening set $J$ that intersects every minimum dicut exactly once, such that $\sum_{J}\lambda_J\1_J = \1$, $\1^\top \lambda = \tau$, and $\big\{\1_J:\lambda_J\neq 0\big\}$ is an integral basis for its linear hull.
\end{theorem}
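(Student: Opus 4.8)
The plan is to apply Theorem~\ref{scr-theorem} with the family $\zF$ chosen to be the collection of all $U\subset V$, $U\neq\emptyset$, with $\delta^-(U)=\emptyset$ and $|\delta^+(U)|=\tau$, i.e.\ the "shores" of the minimum dicuts. For such $U$ the constraint $x(\delta^+(U))-x(\delta^-(U))=1-|\delta^-(U)|$ reads $x(\delta^+(U))=1$, and $1-|\delta^-(U)|=1$, so the GCD condition of Theorem~\ref{scr-theorem} is trivially satisfied (every right-hand side equals $1$). Also $\emptyset,V\notin\zF$ since $D$ has a dicut of finite size and $\tau\geq 2$ forces connectivity of the underlying graph to behave well; I should first check $\zF\neq\emptyset$, which holds because some dicut has size exactly $\tau$. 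The face
$$F=\scr(D)\cap\{x:\ x(\delta^+(U))=1\ \ \forall U\in\zF\}$$
is nonempty: I would exhibit a fractional point, or better an integral point, lying on it — for instance take any strengthening set and argue, or more cleanly invoke the known fact (to be cited from the supermodular-flow/orientation literature, e.g.\ Schrijver \S60) that $\tau\cdot\mathbf{1}$, suitably interpreted, can be fractionally decomposed, equivalently that $\frac1\tau\mathbf{1}\in\scr(D)$ and it satisfies every minimum-dicut constraint with equality. Thus $\frac1\tau\mathbf 1\in F$, so $F\neq\emptyset$ and Theorem~\ref{scr-theorem} applies, yielding an integral basis $B=\{\mathbf 1_{J_1},\dots,\mathbf 1_{J_k}\}\subseteq F\cap\{0,1\}^A$ for $\lin(F)$.

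Next I would translate "$\mathbf 1_J\in F\cap\{0,1\}^A$" into the combinatorial statement of the theorem: the vertices of $\scr(D)$ are exactly incidence vectors of strengthening sets, so $\mathbf 1_J\in\scr(D)\cap\{0,1\}^A$ means $J$ is a strengthening set, and the added equalities $x(\delta^+(U))=1$ for all $U\in\zF$ say precisely that $J$ meets every minimum dicut exactly once. So $B$ consists of indicator vectors of strengthening sets intersecting every minimum dicut exactly once — exactly the objects over which the coefficients $\lambda_J$ range in the statement. It remains to produce the coefficients with $\sum_J\lambda_J\mathbf 1_J=\mathbf 1$ and $\mathbf 1^\top\lambda=\tau$. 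For this I need $\mathbf 1\in\lin(F)\cap\cZ^A$ and then use that $B$ is an integral basis to write $\mathbf 1$ as an integral combination of $B$. That $\mathbf 1\in\lin(F)$: since $\frac1\tau\mathbf 1\in F$ and $F$ is a polytope containing it, $\mathbf 1=\tau\cdot(\frac1\tau\mathbf 1)\in\lin(F)$ (the linear hull is a subspace, hence closed under scaling). Hence $\mathbf 1=\sum_{i}\lambda_{J_i}\mathbf 1_{J_i}$ with $\lambda_{J_i}\in\cZ$; set $\lambda_J=0$ for all other $J$. Finally $\mathbf 1^\top\lambda=\tau$: apply the linear functional $x\mapsto x(\delta^+(U))$ for any fixed $U\in\zF$ to the identity $\sum_i\lambda_{J_i}\mathbf 1_{J_i}=\mathbf 1$; the left side is $\sum_i\lambda_{J_i}\,\mathbf 1_{J_i}(\delta^+(U))=\sum_i\lambda_{J_i}\cdot 1=\mathbf 1^\top\lambda$ since each $J_i$ meets this dicut exactly once, and the right side is $|\delta^+(U)|=\tau$. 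And $\{\mathbf 1_J:\lambda_J\neq 0\}\subseteq B$ is linearly independent and spans the same subspace, so it is itself an integral basis for its linear hull (a subset of an integral basis that still spans the relevant lattice — here I should be slightly careful and instead note that $B$ itself is the integral basis, and the nonzero-$\lambda$ vectors form a subset of it; to get "integral basis for its linear hull" I may simply take $B$ as the witness, or argue that removing vectors with $\lambda_J=0$ from an integral basis leaves an integral basis of the span of what remains — this is immediate since integral bases of a lattice are exactly $\cZ$-bases and any subset is a $\cZ$-basis of its own span when it is part of a $\cZ$-basis).

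The genuinely non-routine point is establishing $F\neq\emptyset$, equivalently that $\frac1\tau\mathbf 1\in\scr(D)$ and attains equality on every minimum dicut. Attaining equality on minimum dicuts is automatic once $\frac1\tau\mathbf 1\in\scr(D)$, because a minimum dicut $\delta^+(U)$ has $\delta^-(U)=\emptyset$, so the CUT inequality at $U$ becomes $x(\delta^+(U))\geq 1$, and $\frac1\tau\mathbf 1(\delta^+(U))=|\delta^+(U)|/\tau=1$. So everything reduces to $\frac1\tau\mathbf 1\in\scr(D)$, i.e.\ for every nonempty proper $U$, $\frac1\tau(|\delta^+(U)|-|\delta^-(U)|)\geq 1-|\delta^-(U)|$, equivalently $|\delta^+(U)|\geq \tau-(\tau-1)|\delta^-(U)|$; when $\delta^-(U)=\emptyset$ this is the hypothesis $|\delta^+(U)|\geq\tau$, and when $|\delta^-(U)|\geq 1$ the right side is $\leq\tau-(\tau-1)=1\leq|\delta^+(U)|$ provided $\delta^+(U)\neq\emptyset$; and $\delta^+(U)=\emptyset$ cannot happen for any $U$ because then $\delta^+(V\setminus U)=\emptyset$ would make $\delta^-(V\setminus U)$... actually if $\delta^+(U)=\emptyset$ then $\delta^+(V\setminus U)=\delta^-(U)$ and $\delta^-(V\setminus U)=\emptyset$, so $\delta^+(V\setminus U)$ is a dicut and must have size $\geq\tau\geq 2\geq 1$, so $\delta^-(U)\neq\emptyset$, and we are back in the case $|\delta^-(U)|\geq 1$ with now the inequality $0\geq 1-|\delta^-(U)|$, i.e.\ $|\delta^-(U)|\geq 1$, which holds. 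So $\frac1\tau\mathbf 1\in\scr(D)$ after all, and the argument is complete; I will write this verification out carefully since it is the only place the hypothesis "minimum dicut size $=\tau$" and "$\tau\geq 2$" enter, and it is the load-bearing step that makes the face nonempty.
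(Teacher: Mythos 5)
Your proposal is correct and follows essentially the same route as the paper's own proof: take $\zF$ to be the shores of the minimum dicuts, exhibit $\frac1\tau\1\in F$, invoke \Cref{scr-theorem} for an integral basis $B\subseteq F\cap\{0,1\}^A$, write $\1$ as an integer combination of $B$, read off $\1^\top\lambda=\tau$ by pairing against any $U\in\zF$, and note that the nonzero part of $B$ is again an integral basis for its span. One small slip in your verification that $\frac1\tau\1\in\scr(D)$: in the sub-case $\delta^+(U)=\emptyset$, the inequality to check is $(\tau-1)|\delta^-(U)|\geq\tau$, not $0\geq 1-|\delta^-(U)|$ as you wrote; it nevertheless follows from what you already established, namely $|\delta^-(U)|=|\delta^+(V\setminus U)|\geq\tau$ together with $\tau\geq 2$, so the conclusion stands (the paper compresses this entire verification into the single observation that $|\delta^+(U)|+(\tau-1)|\delta^-(U)|\geq\tau$ for every nonempty proper $U$).
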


Observe that \Cref{Woodall-CN-2} states that one can replace $\lambda_J \in \cZ$ by $\lambda_J \in \cZ_{\geq 0}$ in this theorem. This result does not extend to the capacitated setting; we shall explain this in \S\ref{subsec:schrijver-example}.

\begin{figure}[ht]
	\centering
	\includegraphics[scale=0.4]{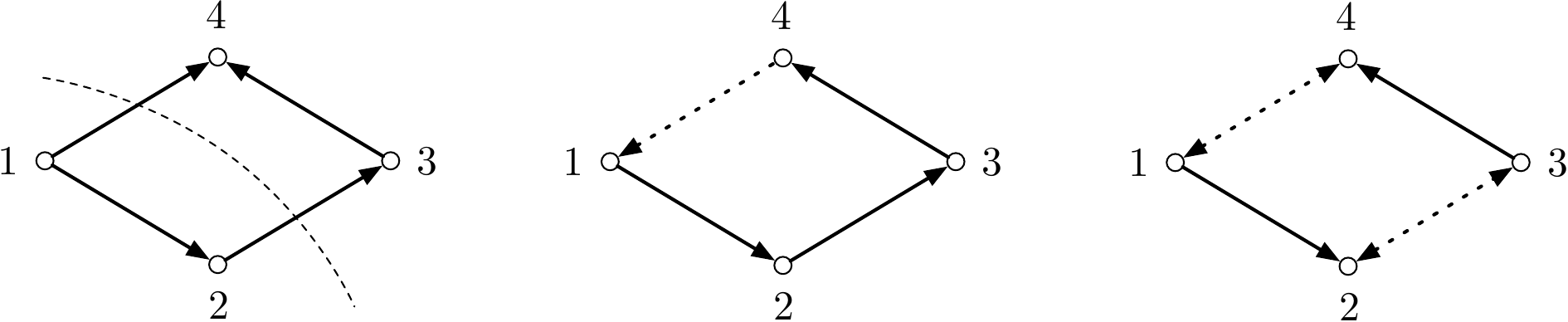}
	\caption{\b{A digraph $D=(V,A)$ (left). The set $\delta^+(\{1,2\})=\{(1,4),(2,3)\}$ is a dicut of $D$. The subset $\{(1,4)\}$ is a strengthening set of $D$, as reversing the arc in the set yields a strongly connected digraph (middle). The subset $\{(1,4),(2,3)\}$ is a dijoin of $D$, as bidirecting the arcs in the set yields a strongly connected digraph (right).}
	}
	\label{fig:dijoin-dicut}
\end{figure}

\paragraph{$p$-adic programming.} Given a prime number $p\geq 2$, a rational number is \emph{(finitely) $p$-adic} if it is of the form $r/p^k$ for some integer $r$ and nonnegative integer $k$, and a vector is \emph{$p$-adic} if each entry is a $p$-adic rational number. The $2$-adic, or \emph{dyadic}, rationals are important for numerical computations because they have a finite binary representation, and therefore can be represented exactly on a computer in floating-point arithmetic. Recently, the first two authors along with Guenin and Tun\c{c}el characterized when a linear program admits an optimal solution that is $p$-adic, and furthermore, they provided a polynomial algorithm for solving a linear program whose domain is restricted to the set of $p$-adic vectors~\cite{Abdi-DLP}. 

\Cref{scr-theorem} implies, for any prime number $p\geq 2$, the existence of a sparse $p$-adic optimal solution to a linear program related to packing dijoins. To elaborate, let $D=(V,A)$ be a digraph whose underlying undirected graph is connected. Denote by $M$ the matrix whose columns are labeled by $A$, and whose rows are the indicator vectors of the dijoins of $D$. Consider the following pair of dual linear programs,
$$
(P) \quad \min\left\{\1^\top x: Mx\geq \1, x\geq \0\right\} \qquad  \qquad
(D) \quad \max\left\{\1^\top y:M^\top y\leq \1, y\geq \0\right\}
$$ where $\1,\0$ denote the all-ones and all-zeros vectors of appropriate dimensions, respectively. A seminal theorem is that the primal linear program $(P)$ models exactly the \emph{minimum dicut problem}, i.e., $(P)$ admits an integral optimal solution~(\cite{Lucchesi78}, see~\cite{Cornuejols01}, \S1.3.4). Woodall's conjecture equivalently states that the dual linear program $(D)$, in turn, computes the maximum number of pairwise disjoint dijoins, that is, $(D)$ admits an integral optimal solution~\cite{Woodall78}. The main result of this paper implies some number-theoretic evidence for this conjecture, as it has the following consequence.

\begin{theorem}\label{p-adic-CO}
For any prime number $p\geq 2$, $(D)$ admits a $p$-adic optimal solution with at most $2|A|$ nonzero entries.
\end{theorem}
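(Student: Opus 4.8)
The plan is to deduce Theorem~\ref{p-adic-CO} from Theorem~\ref{scr-theorem} (in fact from its general form for arbitrary faces) via linear programming duality. By the theorem of Lucchesi and Younger~\cite{Lucchesi78} recalled above, $(P)$, and hence $(D)$, has optimal value $\tau$, the minimum size of a dicut. Let $\zF$ be the family of sets $U$ with $\delta^-(U)=\emptyset$ and $|\delta^+(U)|=\tau$, let $\mathcal J$ be the set of strengthening sets meeting every minimum dicut exactly once, and let
$$F:=\scr(D)\cap\left\{x:x(\delta^+(U))=1,\ \forall U\in\zF\right\},$$
so the vertices of $F$ are exactly the vectors $\1_J$, $J\in\mathcal J$, and one checks directly that $\tfrac1\tau\1\in F$. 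A short computation with the constraints defining $\scr(D)$ shows that any $y\ge\0$ supported on $\mathcal J$ with $\sum_{J\in\mathcal J}y_J\1_J=\1$ is an optimal solution of $(D)$: it is feasible, and pairing the identity with the incidence vector of a minimum dicut gives $\1^\top y=\tau$. So it suffices to find such a $y$ that is $p$-adic and has at most $2|A|$ nonzero entries.

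The structural heart of the argument is an \emph{integral} such identity. I would let $G$ be the minimal face of $F$ containing $\tfrac1\tau\1$; since $0<\tfrac1\tau<1$, $G$ is cut out from $\scr(D)$ by setting $x(\delta^+(U))-x(\delta^-(U))=1-|\delta^-(U)|$ for $U$ in a family $\zF'\supseteq\zF$, and $\gcd\{1-|\delta^-(U)|:U\in\zF'\}$ divides the gcd over $\zF$, which is $1$. Applying the general form of Theorem~\ref{scr-theorem} to $G$, the set $G\cap\{0,1\}^A$ contains an integral basis $\mathcal B_G$ for $\lin(G)$; since $\tfrac1\tau\1\in G$ we have $\1\in\lin(G)\cap\cZ^A$, so $\1=\sum_{J\in\mathcal B_G}\lambda_J\1_J$ for integers $\lambda_J$. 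Let $\tilde y\in\cR^{\mathcal J}$ be the vector with $\tilde y_J=\lambda_J$ on $\mathcal B_G$ and $\tilde y_J=0$ elsewhere; it is integral, satisfies $\sum_J\tilde y_J\1_J=\1$, and has at most $\dim\lin(G)\le|A|$ nonzero entries. Each element of $\mathcal B_G$ is a vertex of $G$, and since $\tfrac1\tau\1$ lies in the relative interior of $G$ it has a convex representation by the vertices of $G$ with \emph{all} coefficients positive; hence there is a nonnegative solution of $\sum_J y_J\1_J=\1$ that is strictly positive on $\mathcal B_G$, and therefore, for a suitable $\epsilon>0$, the polytope $R:=\{y\ge\0:\sum_J y_J\1_J=\1,\ y_J\ge\epsilon\ \forall J\in\mathcal B_G^-\}$ is nonempty, where $\mathcal B_G^-:=\{J\in\mathcal B_G:\lambda_J<0\}$.

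Since $R$ is nonempty, pointed, and bounded, it has a vertex $\bar y$. Substituting $y_J=\epsilon+z_J$ on $\mathcal B_G^-$ and applying the standard basic-solution count to the resulting system shows that $\supp(\bar y)=\mathcal B_G^-\cup S$ with $S\cap\mathcal B_G^-=\emptyset$ and $|S|\le\dim\lin(F)\le|A|$. I would then set $y(t):=(1-t)\bar y+t\tilde y$. For every $t$ we have $\sum_J y(t)_J\1_J=\1$; for all small enough $t>0$ we have $y(t)\ge\0$, because on each coordinate where $\tilde y$ is negative (namely $\mathcal B_G^-$) we have $\bar y\ge\epsilon$, while $\tilde y\ge\0$ off $\mathcal B_G^-$ and $\bar y\ge\0$ everywhere; and $y(t)$ is $p$-adic precisely when $(1-t)\bar y$ is, which holds whenever $1-t$ is an integer multiple of $q/p^k$ for some $k\ge0$, where $q$ is the largest divisor of the common denominator of $\bar y$ that is coprime to $p$. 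Such values of $t$ are dense in $\cR$, so one lies in the nonempty open admissible range; for it, $y:=y(t)$ is a $p$-adic, nonnegative solution of $\sum_J y_J\1_J=\1$ supported on $\supp(\bar y)\cup\mathcal B_G=\mathcal B_G\cup S$, which has at most $|\mathcal B_G|+|S|\le 2\dim\lin(F)\le 2|A|$ elements. Extending $y$ by zeros to all dijoins yields the desired $p$-adic optimal solution of $(D)$.

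The two places I expect to require care are already visible. First, the main theorem must be invoked on the minimal face $G$ through $\tfrac1\tau\1$ rather than on $F$ (or via Theorem~\ref{ARF-partition-CO} directly): this is exactly what forces $\supp(\tilde y)\subseteq\mathcal B_G$ to consist of vertices of $G$, which is what allows one to produce a sparse nonnegative solution positive on $\mathcal B_G^-$. Second, the concluding step must trade sparsity against $p$-adicity carefully — one cannot stay $p$-adic while moving all the way to a boundary point of the nonnegative region, so one instead perturbs a \emph{sparse} nonnegative vertex $\bar y$ of $R$ a small $p$-adic amount toward the integral $\tilde y$, and the arrangement $\mathcal B_G^-\subseteq\mathcal B_G$ is what keeps the final support at $2|A|$ rather than $3|A|$.
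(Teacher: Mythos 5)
Your proof is correct, and it diverges from the paper's own argument in two places worth flagging. (i) The paper also starts from the face $F$ of $\scr(D)$ cut out by the minimum dicuts and extracts the integral identity $\1=\sum_{b\in B}\lambda_b b$ from the integral basis of Theorem~\ref{scr-theorem} (Claim~1 in its proof), but to obtain a \emph{strictly positive} packing it passes to the digraft $D'$ via the map $x\mapsto\big(\begin{smallmatrix}x\\ \1-x\end{smallmatrix}\big)$ of Theorem~\ref{scr->dij} and invokes Goldman--Tucker strict complementarity there (Claim~2). You sidestep the digraft detour entirely: by descending to the minimal face $G$ of $\scr(D)$ through $\tfrac1\tau\1$ and applying Theorem~\ref{scr-theorem} on $G$, the integral basis $\mathcal B_G$ lives among the vertices of $G$, and the elementary fact that a relative-interior point is a strictly positive convex combination of the vertices hands you positivity on $\mathcal B_G^-$ for free. (ii) The larger divergence is the final extraction of a $p$-adic point: the paper cites an external black box, (\cite{Abdi24-TDD}, Lemma 2.2) — a nonempty rational polyhedron whose affine hull contains a $p$-adic point contains a $p$-adic point — applied to a carefully chosen row submatrix $M_3$ with at most $|A|+|B|$ rows. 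You re-prove exactly the needed instance of that lemma by hand: take a sparse vertex $\bar y$ of the shrunken polytope $R$ and move a small $p$-adic step from $\bar y$ toward the integral $\tilde y$, with the floor $y_J\ge\epsilon$ on $\mathcal B_G^-$ protecting nonnegativity. Both routes reach the bound $2|A|$ by a basic-feasible-solution count ($|\mathcal B_G|+|S|\le\dim\lin(G)+\dim\lin(F)\le 2|A|$ for you; $|A|+|B|\le 2|A|$ for the paper). Your version is more self-contained and avoids both the digraft machinery of \S6.1 and the dyadic-LP lemma; the paper's version is shorter because it reuses those tools.
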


Observe that Carath\'{e}odory's theorem guarantees an optimal solution to $(D)$ with at most $|A|$ nonzero entries. \Cref{p-adic-CO} guarantees a $p$-adic optimal solution to $(D)$, all the while losing only a factor $2$ in the guarantee for the number of nonzero entries. 

This theorem does not extend to the capacitated setting. More specifically, if the objective function of $(P)$ is replaced by $c^\top x$ for a nonnegative integral vector $c$, then $(D)$ may not have a $p$-adic optimal solution, for any prime number $p\neq 2$, as we shall explain in \S\ref{subsec:schrijver-example}. Interestingly, it has very recently been shown that $(D)$ always admits a dyadic optimal solution in the capacitated setting~\cite{Guenin24+}; the techniques do not seem to yield a guarantee on the number of nonzero entries of a solution.

\paragraph{Hypergraph orientations.} Let $H=(V,\zE)$ be a hypergraph. An \emph{orientation of $H$} consists in designating to each hyperedge $E\in \zE$ a node inside as the \emph{head} of $E$, i.e., it is a mapping $O:\zE\to V$ such that $O(E)\in E$ for each $E\in \zE$. The orientation is \emph{strongly connected} if for each $X\subset V,X\neq \emptyset$, there exists a hyperedge whose designated head is inside $X$, and has at least one node outside $X$. 

Two orientations of $H$ are \emph{head-disjoint} if no hyperedge has the same head in both orientations. It is well-known that a graph, which is simply a $2$-uniform hypergraph, has $2$ head-disjoint strongly connected orientations if, and only if, the graph is $2$-edge-connected. The following unpublished conjecture by B\'{e}rczi and Chandrasekaran attempts to extend one direction of this to general $\tau$-uniform hypergraphs. 

Given two subsets $X,E\subseteq V$, we say that \emph{$X$ separates $E$} if $E\cap X\neq \emptyset$ and $E\not\subseteq X$. For $X\subseteq V$, denote by $d_H(X)$ the sum of $|X\cap E|$ ranging over all hyperedges $E\in \zE$ separated by $X$.

\begin{CN}\label{head-disjoint-CN}
Let $H=(V,\zE)$ be a $\tau$-uniform hypergraph such that $d_H(X)\geq \tau$ for all $X\subset V,X\neq \emptyset$. Then $H$ has $\tau$ pairwise head-disjoint strongly connected orientations. 
\end{CN}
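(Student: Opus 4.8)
The plan is to derive \Cref{head-disjoint-CN}, or at least its natural lattice relaxation, from \Cref{scr-theorem} by encoding the hypergraph orientation problem as a strengthening-set problem on an auxiliary digraph. Given a $\tau$-uniform hypergraph $H=(V,\zE)$ with $\tau\ge 2$ and $d_H(X)\ge\tau$ for every $\emptyset\ne X\subsetneq V$, build the digraph $D_H$ on vertex set $V\cup\{u_E:E\in\zE\}$ whose arc set consists exactly of the \emph{selector arcs} $(u_E,v)$ for $E\in\zE$, $v\in E$. Identifying the coordinate $(u_E,v)$ with the head-slot $(E,v)$, a map $O:\zE\to V$ with $O(E)\in E$ is carried to the arc set $J_O:=\{(u_E,O(E)):E\in\zE\}$, and the heart of the reduction is a two-part dictionary lemma. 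First, $J_O$ is a strengthening set of $D_H$ if and only if $O$ is a strongly connected orientation of $H$: this is a routine reading of \eqref{cut-ineq}, whose essential instances occur at the vertex sets $U_X:=X\cup\{u_E:E\cap X\ne\emptyset\}$, where $\delta^-(U_X)=\emptyset$ and the inequality becomes ``some hyperedge meeting $X$ has its head outside $X$''. Second, the minimum dicuts of $D_H$ are exactly the singleton cuts $\delta^+(\{u_E\})$ of size $\tau$, together with the cuts $\delta^+(U_X)$ of size $|\delta^+(U_X)|=d_H(V\setminus X)$; hence the hypothesis $d_H\ge\tau$ says precisely that $D_H$ has minimum dicut size $\tau$, and in particular $D_H$ is bridgeless, so its underlying graph is $2$-edge-connected.

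With the dictionary in hand, a strengthening set of $D_H$ meeting every minimum dicut exactly once must reverse exactly one selector arc per hyperedge (from the constraints at the $\delta^+(\{u_E\})$), so it equals $J_O$ for a strongly connected orientation $O$; the remaining constraints $|J_O\cap\delta^+(U_X)|=1$ say that, for each $X$ with $d_H(V\setminus X)=\tau$, exactly one hyperedge separated by $X$ has its head outside $X$. Consequently, partitioning $A(D_H)$ into $\tau$ strengthening sets is precisely the task of producing $\tau$ pairwise head-disjoint strongly connected orientations of $H$, so \Cref{head-disjoint-CN} is \Cref{Woodall-CN-2} applied to $D_H$. Since the latter is open, I would instead apply the unconditional \Cref{ARF-partition-CO} to $D_H$, whose minimum dicut size is $\tau\ge 2$: it produces integer weights $\lambda_O$ on the strongly connected orientations $O$ of $H$ meeting the tightness conditions above, with $\sum_O\lambda_O\,z^O=\1$, $\1^\top\lambda=\tau$, and $\{z^O:\lambda_O\ne0\}$ an integral basis for its linear hull, where $z^O\in\{0,1\}^{\{(E,v):v\in E\}}$ is the head-indicator of $O$. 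Spelled out, $\sum_{O:\,O(E)=v}\lambda_O=1$ for every $E\in\zE$ and $v\in E$, with total weight $\tau$: this is the lattice relaxation of \Cref{head-disjoint-CN}, of which \Cref{head-disjoint-CN} itself is the strengthening asserting that the $\lambda_O$ can be chosen nonnegative. Pushing the same reduction through \Cref{p-adic-CO} gives, for every prime $p$, a $p$-adic fractional packing of dijoins of $D_H$ of value $\tau$ with at most $2\tau|\zE|$ nonzero entries, i.e.\ a sparse $p$-adic relaxation of a packing of $\tau$ head-disjoint orientations of $H$.

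The main obstacle is exactly the one that blocks Woodall's conjecture: \Cref{scr-theorem} governs the \emph{lattice} generated by the integer points of the relevant face, delivering an integral basis, but tells us nothing about nonnegative integral conic combinations, so this route cannot on its own bridge the gap between the lattice relaxation above and an honest partition into $\tau$ head-disjoint orientations; by the reduction, bridging it is equivalent to proving \Cref{Woodall-CN-2} on the class of digraphs $\{D_H\}$. The remaining, more routine, work is the verification of the dictionary lemma --- especially the claim that the only minimum dicuts of $D_H$ are the $\delta^+(\{u_E\})$ and the $\delta^+(U_X)$, so that the face of $\scr(D_H)$ invoked by \Cref{ARF-partition-CO} is the one we expect --- together with the bookkeeping for repeated or nested hyperedges and for degenerate instances. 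Finally, the GCD hypothesis of \Cref{scr-theorem} holds automatically here, since every minimum dicut $\delta^+(U)$ of $D_H$ has $\delta^-(U)=\emptyset$ and hence right-hand side $1-|\delta^-(U)|=1$ in \eqref{cut-ineq}.
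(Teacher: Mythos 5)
Be clear about what you have and have not done: the statement you were asked to prove is \Cref{head-disjoint-CN}, which is an open conjecture --- the paper does not prove it either, and only establishes the relaxation \Cref{head-disjoint-CO}. Your proposal, to its credit, says this explicitly: what you actually deliver is (i) a reduction showing that \Cref{head-disjoint-CN} is exactly \Cref{Woodall-CN-2} applied to the auxiliary digraphs $D_H$, and (ii) the lattice relaxation obtained by feeding $D_H$ into \Cref{ARF-partition-CO}. The genuine gap --- replacing $\lambda_O\in\cZ$ by $\lambda_O\in\cZ_{\geq 0}$ --- is precisely the open problem, not a fixable oversight in your argument, so no route through \Cref{scr-theorem} or \Cref{main-digraft} alone can close it.

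The content you do supply checks out and is essentially the paper's treatment of this application, up to two differences worth noting. First, your construction is the arc-reversal of the paper's: you add a source $u_E$ with arcs $(u_E,v)$, $v\in E$, whereas the paper adds a sink $t_E$ with arcs $(v,t_E)$ and takes $\zF=\{V(D)\setminus t_E: E\in\zE\}$, which makes $(D,\zF)$ a digraft on the nose and lets \Cref{main-digraft} be applied directly; your dictionary (strengthening sets $J_O$ versus strongly connected orientations, and the identification of the minimum dicuts as the $\delta^+(\{u_E\})$ and the $\delta^+(U_X)$ with $d_H(V\setminus X)=\tau$) is correct, but routing through \Cref{scr-theorem} via \Cref{ARF-partition-CO} forces you to carry out exactly this extra minimum-dicut verification, and the tight $U_X$-constraints harmlessly shrink the set of orientations you allow in the support. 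Second, as written your route loses the sparsity guarantee $|\{O:\lambda_O\neq 0\}|\leq (\tau-1)|\zE|+1$ of \Cref{head-disjoint-CO}; it is recoverable (the $|\zE|$ independent equations $x(\delta(\{u_E\}))=1$ bound the dimension of the relevant linear hull by $(\tau-1)|\zE|+1$, hence bound the size of any integral basis), but you should state it if you want your conclusion to match the paper's theorem rather than just the system of equalities. One small imprecision: only those $\delta^+(U_X)$ with $d_H(V\setminus X)=\tau$ are minimum dicuts, and dicuts whose shore contains nodes $u_E$ with $E\cap X=\emptyset$ also exist (they have size at least $d_H(V\setminus X)+\tau$, so they never matter); your phrasing lumps these cases together, though the conclusion that the minimum dicut size is $\tau$ is correct.
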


For $\tau=3$, a weaker form of this conjecture appears explicitly in (\cite{Frank11}, Conjecture 9.4.15). We prove the following relaxation of this conjecture.

\begin{theorem}\label{head-disjoint-CO}
Let $\tau\geq 2$ be an integer, and let $H=(V,\zE)$ be a $\tau$-uniform hypergraph such that $d_H(X)\geq \tau$ for all $X\subset V,X\neq \emptyset$. Then there exists an assignment $\lambda_O\in \cZ$ to every strongly connected orientation $O:\zE\to V$ such that $$
 \sum \left(\lambda_O: \text{strongly connected orientation } O, O(E) = v\right) =1 \qquad \forall E\in \zE,\, \forall v\in E,
 $$ and $|\{O:\lambda_O\neq 0\}|\leq (\tau-1)|\zE|+1$.
\end{theorem}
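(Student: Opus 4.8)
The plan is to recast the question as a statement about strongly connected orientations of an auxiliary digraph and then appeal to \Cref{scr-theorem}. Given the $\tau$-uniform hypergraph $H=(V,\zE)$, form the digraph $D$ with vertex set $V\cup\{w_E:E\in\zE\}$ (one new vertex $w_E$ per hyperedge) and, for every hyperedge $E$ and every $v\in E$, a single arc $a_{E,v}$ directed from $w_E$ to $v$; thus $A:=A(D)$ has $\tau|\zE|$ elements, and I identify the coordinate indexed by $a_{E,v}$ with the pair $(E,v)$. Two structural facts about $D$ need verification, both routine consequences of the hypothesis $d_H(X)\ge\tau$ for all nonempty proper $X\subset V$. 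First, the underlying graph of $D$ is $2$-edge-connected: a bridge would have the form $\{w_E,v\}$, and deleting it would expose a nonempty proper set $X\subseteq V$ separating only the hyperedge $E$ and only in the vertex $v$, so that $d_H(X)\le 1<\tau$. Second, the minimum size of a dicut of $D$ is exactly $\tau$: each $\delta^+(\{w_E\})=\{a_{E,v}:v\in E\}$ is a dicut of size $\tau$ since no arc enters $w_E$; conversely if $\delta^-(U)=\emptyset$ and $X:=U\cap V$ is nonempty and proper, then $E\cap X\ne\emptyset$ forces $w_E\in U$, so $|\delta^+(U)|\ge\sum_{E\text{ separated by }X}|E\setminus X|=d_H(V\setminus X)\ge\tau$, while the case $X=\emptyset$ is immediate. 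The essential twist in this last estimate is that the arcs of $\delta^+(U)$ leaving a vertex $w_E$ land in $E\setminus X$, so one bounds $|\delta^+(U)|$ below using the degree of the \emph{complement} $V\setminus X$, which is again at least $\tau$ by hypothesis.

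Second, I would establish the dictionary: for an orientation $O:\zE\to V$, let $J_O:=\{a_{E,O(E)}:E\in\zE\}$, so that $\1_{J_O}$ is precisely the indicator of the head choices of $O$, and $J_O$ meets every dicut $\delta^+(\{w_E\})$ exactly once. The claim is that $J_O$ is a strengthening set of $D$ if and only if $O$ is a strongly connected orientation of $H$, and both directions are short case analyses on a nonempty proper set $U\subseteq V(D)$ using the description of the reversal of $J_O$ (at each $w_E$: out-arcs to $E\setminus\{O(E)\}$ and one in-arc from $O(E)$). For the direction actually needed, suppose $J_O$ is a strengthening set and let $Y\subsetneq V$ be nonempty; feeding $U:=(V\setminus Y)\cup\{w_E:O(E)\notin Y\}$ (which is nonempty and proper) into the strong connectivity of the reversal of $J_O$, the only arc that can enter $U$ is of the form $w_E\to v$ with $O(E)\in Y$ and $v\in E\setminus Y$, which witnesses a hyperedge $E$ with $O(E)\in Y$ and $E\not\subseteq Y$; hence $O$ is strong. (For the converse, given $U$ with $X:=U\cap V$ nonempty proper in $V$, apply strong connectivity of $O$ to $V\setminus X$ to get a hyperedge $E$ with $O(E)\notin X$, $E\cap X\ne\emptyset$, and then the arc $O(E)\to w_E$ or some arc $w_E\to v$ with $v\in E\cap X$ enters $U$; the degenerate cases $U\cap V\in\{\emptyset,V\}$ use $\tau\ge 2$ and are handled directly.)

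Finally, I would apply \Cref{scr-theorem} to $D$ with $\zF:=\{\{w_E\}:E\in\zE\}$. Since $\delta^-(\{w_E\})=\emptyset$, the face in question is $F=\scr(D)\cap\{x: x(\delta^+(\{w_E\}))=1\ \forall E\in\zE\}$, and the GCD condition reads $\gcd\{1:E\in\zE\}=1$, which holds. The face $F$ is nonempty because $\tfrac1\tau\1$ lies in it — here one uses that the minimum dicut of $D$ has size $\tau$ and that $\tau\ge 2$ to check all \eqref{cut-ineq} inequalities — so $\1=\tau\cdot\tfrac1\tau\1\in\lin(F)\cap\cZ^A$. \Cref{scr-theorem} then furnishes an integral basis $B\subseteq F\cap\{0,1\}^A$ of $\lin(F)$ and integers $(\lambda_J)_{J\in B}$ with $\sum_{J\in B}\lambda_J\1_J=\1$. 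Each $J\in B$ is a vertex of the integral polytope $\scr(D)$, hence the indicator of a strengthening set, and it lies on $F$, hence picks exactly one arc per hyperedge; therefore $J=J_O$ for an orientation $O$, which is strongly connected by the dictionary above. Setting $\lambda_O:=\lambda_J$ and reading the identity $\sum_{J\in B}\lambda_J\1_J=\1$ in the coordinate $a_{E,v}$ gives exactly $\sum_{O(E)=v}\lambda_O=1$ for every $E\in\zE$ and $v\in E$. As for the support bound, the $|\zE|$ equations defining $F$ have pairwise disjoint supports, hence are linearly independent, so $F$ is contained in an affine subspace of dimension $\tau|\zE|-|\zE|$ not containing the origin, whence $\dim\lin(F)\le(\tau-1)|\zE|+1$; since $B$ is linearly independent, $|\{O:\lambda_O\ne 0\}|\le|B|=\dim\lin(F)\le(\tau-1)|\zE|+1$. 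I expect the main difficulty to be the dictionary in the second step — choosing the correct vertex set to test in each direction — together with the complement trick needed to pin down the minimum dicut size of $D$; once these are in place the theorem follows directly from \Cref{scr-theorem}.
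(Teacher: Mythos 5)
Your proof is correct and takes essentially the same route as the paper's: the auxiliary digraph is the same up to reversing every arc (hyperedge-vertices as sources $w_E$ instead of sinks $t_E$), the dictionary between strongly connected orientations of $H$ and arc subsets with degree one at each hyperedge-vertex is the same, and the final dimension count is identical. The only superficial differences are that the paper invokes \Cref{main-digraft} directly on the bipartite digraph and verifies nonemptiness of the relevant face via a fractional dijoin packing and complementary slackness, whereas you route through \Cref{scr-theorem} and observe directly that $\tfrac1\tau\1$ lies in the face.
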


Note that \Cref{head-disjoint-CN} states that \b{in \Cref{head-disjoint-CO}} one can replace $\lambda_O\in \cZ$ by $\lambda_O\in \cZ_{\geq 0}$.

\subsection{The dijoin polyhedron and digrafts}

\Cref{scr-theorem} is a consequence of a lattice theoretic result about the dijoin polyhedron of bipartite digraphs. To this end, for a digraph $D=(V,A)$, let $$\dij(D):=\left\{x\in \cR^A: x(\delta^+(U))\geq 1,\,\forall \text{ dicut $\delta^+(U)$}; x\geq \0\right\}.$$ It is known that $\dij(D)$ is an integral polyhedron, and its vertices are precisely the indicator vectors of the (inclusionwise) minimal dijoins of $D$~(\cite{Lucchesi78}, see \cite{Cornuejols01}, \S1.3.4). 

\begin{DE}[bipartite digraph]
A digraph is \emph{bipartite} if every node is a source or a sink.
\end{DE}

Recently, the first two authors and Zlatin demonstrated the importance of bipartite digraphs in studying Woodall's conjecture, by making steps towards the problem by first reducing the conjecture to a special class of bipartite digraphs~\cite{Abdi23-dijoins}. We shall follow these footsteps by studying faces of the dijoin polyhedron of a bipartite digraph.

\begin{DE}[digraft]
	A \emph{digraft} is a pair $(D=(V,A),\zF)$ where $D$ is a bipartite digraph, the underlying undirected graph of $D$ is $2$-edge-connected, and $\zF$ is a family over ground set $V$ such that (a) $\emptyset,V\notin \zF$, (b) if $U\in \zF$ then $\delta^-(U)=\emptyset$, (c) $V\setminus v\in \zF$ for every sink $v$ of $D$, and (d) the following face of $\dij(D)$ is nonempty: 
				$$F(D,\zF):=\dij(D)\cap \left\{x\in \cR^{A} : x(\delta^+(U))=1,\,\forall U\in \zF\right\}\subseteq [0,1]^A.$$ 
\end{DE}

\b{See \Cref{fig:digrafts} (left) for an illustration of a digraft.} The choice of the `digraft' terminology mirrors that of a `graft', an object that shows up in the context of the \emph{minimum $T$-join problem}, and is loosely related to the \emph{minimum dijoin problem} (see~\cite{Cornuejols01}, \S1.3.5).

\begin{figure}[ht]
	\centering
	\includegraphics[scale=0.4]{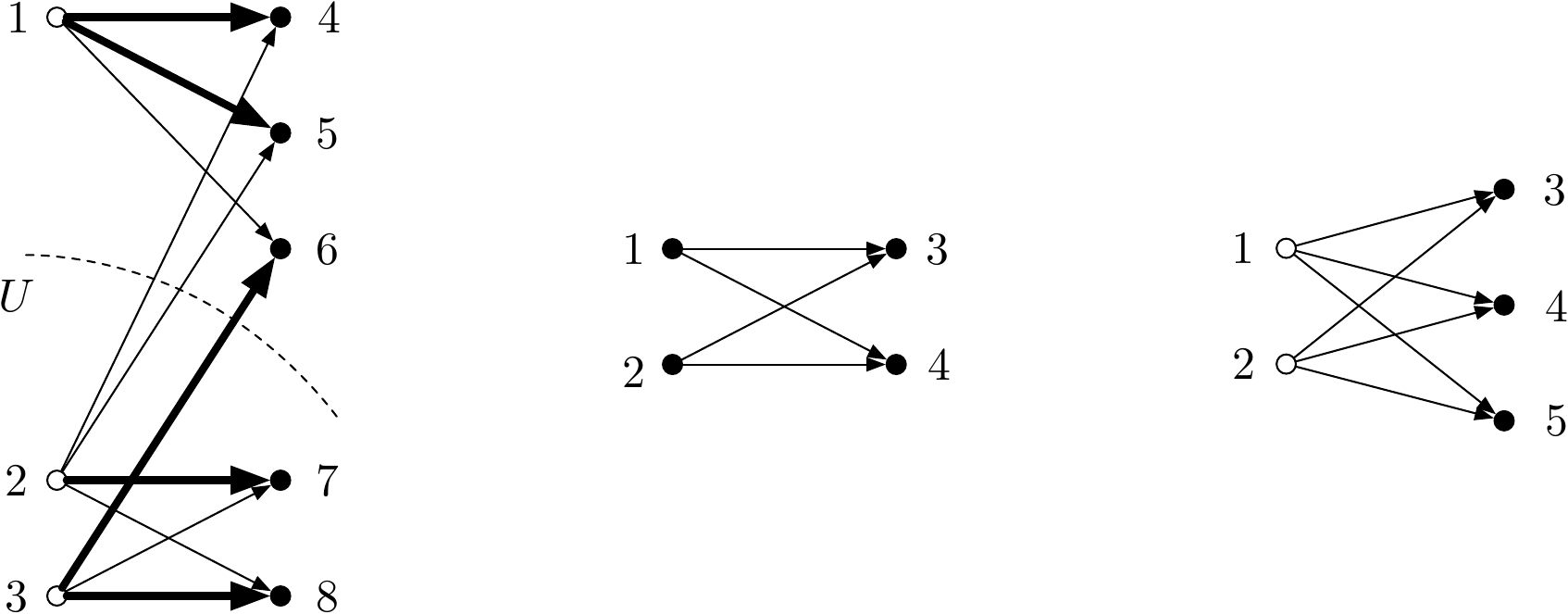}
\caption{\b{(Left) A digraft $(D,\zF)$ where $\zF$ consists of $V\setminus v$ for all sinks $v$, as well as $U=\{2,3,7,8\}$. The set of bold arcs is a dijoin $J$ whose indicator vector belongs to $F(D,\zF)$. (Middle/Right) A digraft $(D,\zF)$ where $\zF$ consists of $V\setminus v$ for all sinks $v$, and $\{u\}$ for every filled-in source $u$. The middle digraft is balanced basic, and robust. The right digraft is skewed basic, and robust. In all three digrafts, the filled-in nodes are tight, and the non-filled-in nodes are active.
}}
	\label{fig:digrafts}
\end{figure}

\Cref{scr-theorem} is a consequence of the following result.

\begin{theorem}\label{main-digraft}
	Let $(D=(V,A),\zF)$ be a digraft. Then $F(D,\zF)\cap \{0,1\}^A$ contains an integral basis for $\lin(F(D,\zF))$. 
\end{theorem}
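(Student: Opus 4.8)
The statement concerns the face $F(D,\zF)$ of the dijoin polyhedron of a bipartite digraph, and asks for an integral basis among its $0/1$ points. My approach would be to develop a combinatorial model for the lattice points of $F(D,\zF)$ that plays the role that Matching Theory plays for the perfect matching polytope, since the abstract says exactly this is done. Concretely, in a bipartite digraph every arc goes from a source to a sink, and the constraints $x(\delta^+(U)) = 1$ for $U \in \zF$ together with $x(\delta^+(U)) \ge 1$ for all dicuts, plus $x \ge \0$, impose degree-type constraints on the arc set; the vertices of $F(D,\zF)$ are indicator vectors of certain minimal dijoins. So I would first show that $F(D,\zF) \cap \{0,1\}^A$ is an \emph{$M$-convex set} (equivalently, that the corresponding family of arc subsets forms the bases of a matroid-like structure, or a jump system), which is what the keyword list and the "degree-constrained dijoins" phrase suggest. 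The exchange axiom for such a family would come from the supermodularity of the dicut structure combined with bipartiteness, via an alternating-path argument analogous to the symmetric-difference argument for matchings.

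**Key steps, in order.** (1) Set up the "digraft" combinatorics: characterize which arc subsets $J$ have $\1_J \in F(D,\zF)$ — these should be the minimal dijoins hitting every $U \in \zF$ exactly once — and prove a degree-constrained reformulation using that $D$ is bipartite and $2$-edge-connected. (2) Prove an exchange / connectivity property: given two such arc sets $J_1, J_2$, their symmetric difference decomposes into "alternating" structures along which one can pivot from $J_1$ towards $J_2$ one step at a time while staying in the family; this is the analogue of alternating paths and cycles. (3) Deduce that $\lin(F(D,\zF))$ is spanned by differences $\1_{J_1} - \1_{J_2}$ of adjacent vertices, and that these differences are "elementary" (supported on a single alternating circuit). (4) Build the integral basis greedily: starting from one vertex $\1_{J_0}$, repeatedly adjoin an edge-difference $\1_{J_i} - \1_{J_{i-1}}$ that increases the rank, using the exchange property to guarantee one exists as long as we have not yet spanned $\lin(F)$; then argue the resulting set $\{\1_{J_0}\} \cup \{\1_{J_i} - \1_{J_{i-1}}\}$ — or rather a suitable $0/1$ subset of $F$, after re-expressing differences — generates all of $\lin(F) \cap \cZ^A$ over $\cZ$. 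The crucial point is that the elementary differences have a unimodular-type structure (coefficients $\pm 1$ on a circuit), so the lattice they generate, together with a base vertex, is exactly the full integer lattice of the subspace; this is where I would lean on a total-unimodularity or network-matrix property of the incidence structure of dicuts in a bipartite digraph.

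**Main obstacle.** The hardest part is Step (2)–(4): proving that the integer points of this particular face not only form an $M$-convex set but also that the elementary exchanges are "unimodular" enough to generate the \emph{integer} lattice rather than merely the \emph{rational} linear hull. For matchings this is classical, but here the face is cut out by equalities coming from an arbitrary admissible family $\zF$ of dicut-defining sets, and one must control how these interact — in particular condition (c), that $V\setminus v \in \zF$ for every sink, is surely used to force enough structure (it pins down the in-degree at each sink). I expect the real work is a careful alternating-substructure analysis in the bipartite digraph showing that any integral vector in $\lin(F)$ can be written as a $\cZ$-combination of vertex differences each supported on a single alternating cycle, and that these cycle-vectors, being rows/columns of a network matrix, span the integer lattice. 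A secondary subtlety is the passage from "base vertex plus differences" to an honest \emph{integral basis consisting of points of $F \cap \{0,1\}^A$} (as the statement demands): one must show that enough actual $0/1$ vertices, not just their differences, can be selected — likely by showing the set of vertices is connected under elementary exchanges and choosing a spanning tree in this exchange graph, whose edges correspond to vertex pairs differing by an elementary circuit, and then noting $\1_{J} = \1_{J_0} + \sum(\text{differences along the tree path})$ lets one trade the differences for the vertices themselves in the generating set.
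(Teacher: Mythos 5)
Your high-level instinct -- that this is ``Matching Theory for degree-constrained dijoins'' with $M$-convexity and alternating-exchange arguments playing the starring role -- is in the right spirit, but the mechanism you propose has a genuine gap, and the paper's proof is structured quite differently.

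The central problem with your Step (4) is the appeal to total unimodularity. The constraint system defining $F(D,\zF)$ includes $x(\delta^+(U))\ge 1$ for \emph{all} dicuts of $D$, and the dicut--arc incidence matrix of a digraph is not totally unimodular in general (if it were, Woodall's conjecture would be easy). The paper shows in \S\ref{subsec:schrijver-example} that $F(D,\zF)\cap\{0,1\}^A$ need not even be a Hilbert basis (IGSC) for its cone, which is precisely the kind of integrality that a network-matrix structure would hand you for free. So the claim that ``vertex plus elementary circuit differences'' automatically generates the full lattice $\lin(F)\cap\cZ^A$ does not follow from unimodularity; it is exactly the hard part, and the paper has to fight for it.

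A secondary issue: $F(D,\zF)\cap\{0,1\}^A$ is \emph{not} itself an $M$-convex set in $\cR^A$. The paper instead maps each $0,1$ point $x$ to its degree sequence $(x(\delta(v))-1)_{v\in V}\in\cR^V$, and it is the image $P(D,\zF)\cap\cZ^V$ that is $M$-convex (a face of an integral base polyhedron). The map is many-to-one, and the inverse is recovered by a perfect $b$-matching argument (\Cref{perfect-b-matching-LE}). $M$-convexity is then used only once, and very locally: to prove the Jump-Free Lemma, which produces a $0,1$ point $b$ with $b(\delta^+(U))=2$ -- the smallest possible jump -- across a chosen facet-defining dicut. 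It is not the engine driving the whole construction.

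The actual proof is an induction that decomposes the digraft along tight or facet-defining non-trivial dicuts via ``$(U,V\setminus U)$-contractions.'' In the base case (basic robust digrafts), the facet description of $F$ is degree-constraint-only, and de Werra's balanced edge-colouring theorem gives the integer decomposition property, from which an integral basis follows by the Gerards--Seb\H{o} route (\Cref{IDP-integral-basis}). In the inductive step one composes the two pieces' integral bases via $\odot$; in the basic non-robust case this falls exactly one vector short of a basis (the Affine Critical Lemma controls the dimension count precisely), and the Jump-Free Lemma supplies the missing $0,1$ point $b$ with $b(\delta^+(U))=2$. The value $2$ is what keeps the resulting determinant at $\pm1$ and the basis integral. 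None of this delicate bookkeeping -- the dicut decomposition, the slack/dimension analysis, the ``exactly one extra vector per facet-defining non-trivial dicut with jump exactly $2$'' -- appears in your sketch, and a generic greedy ``add rank-increasing differences'' approach will not recover it, because it gives you a linear basis but no control on the index of the lattice it generates.
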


It may not be clear how \Cref{main-digraft} is related to \Cref{scr-theorem}. To de-mystify this connection, let $(D=(V,A),\zF)$ be a digraft. As the underlying undirected graph of $D$ is $2$-edge-connected, every minimal dijoin is a strengthening set (see \cite{Schrijver03}, Theorem 55.1). Furthermore, every strengthening set that has exactly one arc incident with every sink, is also a minimal dijoin. \b{(For example, the bold arcs in 
\Cref{fig:digrafts} (left) form such a minimal dijoin.)} Subsequently, $F(D,\zF)=\scr(D)\cap \left\{x\in \cR^{A}:
x(\delta^+(U))-x(\delta^-(U))=1-|\delta^-(U)|,\,\forall U\in \zF
\right\}$. Furthermore, $\gcd\{1-|\delta^-(U)|: U\in \zF\}=1$. Thus, \Cref{main-digraft} follows from \Cref{scr-theorem}. 

The converse implication also holds, though we save the proof of this for \S\ref{sec:apps} after we prove \Cref{main-digraft} directly. \b{To shed some light on this, take a digraph $D=(V,A)$ and family $\zF$ from \Cref{scr-theorem}. The key idea is to transform the pair $D,\zF$ to a digraft $(D'=(V',A'),\zF')$, and in the process, bijectively map $F\cap \{0,1\}^A$ to $F(D',\zF')\cap \{0,1\}^{A'}$. In doing so, we map the proper nonempty node subsets $U$ of $D$ to node subsets $\varphi(U)$ of $D'$, where $\delta^-_{D'}(\varphi(U))=\emptyset$. See \Cref{fig:STR-to-dijoin} for an illustration of this, and \Cref{scr->dij} for further details.}

\begin{figure}[ht]
	\centering
	\includegraphics[scale=0.4]{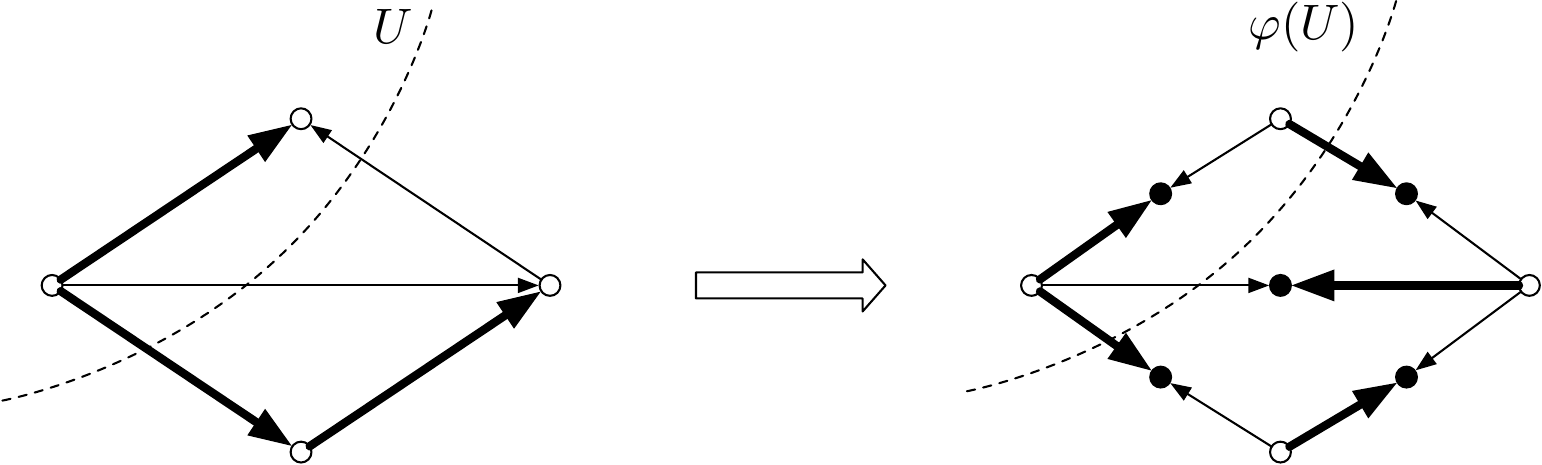}
\caption{\b{(Left) A digraph $D$. The set $J$ of bold arcs forms a strengthening set whose indicator vector belongs to $F$, for $\zF=\emptyset$. 
(Right) The digraft $(D',\zF')$ constructed in the proof of \Cref{scr->dij}. The filled-in nodes are new, while the non-filled-in ones are the original nodes. The set $\phi(J)$ of bold arcs is the image of $J$ under the mapping: it is a dijoin whose indicator vector belongs to $F(D',\zF')$.
}}
	\label{fig:STR-to-dijoin}
\end{figure}

We find \Cref{main-digraft} more convenient to work with than \Cref{scr-theorem}. At the highest level, one explanation for this is that every point in $F(D,\zF)\cap \{0,1\}^A$ is the indicator vector of an arc subset $J$ that has degree one at every sink of $D$, and degree at least one at every source of $D$; so that $J$ may be viewed as a perfect $b_J$-matching in a bipartite graph, for some degree vector $b_J$. Fixing the degree of $J$ at each sink to one has advantages: first, the cardinality of $J$ becomes invariant and equal to the number of sinks of $D$; secondly, in this case, $J$ is a minimal dijoin if and only if it is a strengthening set, an equivalence which we utilized above.

\subsection{Proof overview of \Cref{main-digraft}}

Let us provide an overview of the proof of \Cref{main-digraft}. To this end, let $(D=(V,A),\zF)$ be a digraft. Our goal is to find an integral basis in $F(D,\zF)\cap \{0,1\}^A$  for $\lin(F(D,\zF))$. The following is an important notion needed in the proof.
		
\begin{DE}[\b{balanced/skewed} basic digraft]
A digraft $(D=(V,A),\zF)$ is \emph{basic} if whenever $F(D,\zF)\subseteq \{x:x(\delta^+(U))=1\}$ for some dicut $\delta^+(U)$, then $U\in \zF$, and $|U|\in \{1,|V|-1\}$. \b{A \emph{balanced basic digraft} is a basic digraft with an equal number of sources and sinks. A \emph{skewed basic digraft} is a basic digraft with fewer sources than sinks.}
\end{DE}

The proof proceeds by first decomposing the digraft into basic pieces along dicuts $\delta^+(U)$ such that $1<|U|<|V|-1$ and $F(D,\zF)\subseteq \{x:x(\delta^+(U))=1\}$. Once we find integral bases for the basic pieces, then by composing the bases together in a natural manner, we obtain an integral basis in $F(D,\zF)\cap \{0,1\}^A$.

The challenge now is to find an integral basis for a basic digraft. Here comes a key idea of the proof, which is to study the facet-defining inequalities of $F(D,\zF)$.
	
\begin{DE}[basic robust digraft]
A basic digraft $(D=(V,A),\zF)$ is \emph{robust} if every facet-defining inequality for $F(D,\zF)$ is equivalent to $x_a\geq 0,a\in A$, or $x(\delta^+(u))\geq 1$ for some source $u$ of~$D$. 
\end{DE}

We then divide the proof into two cases, depending on whether the basic digraft is robust. 

\b{See \Cref{fig:digrafts} (middle/right) for illustrations of two relevant digrafts. In both instances, every minimal dicut is trivial, implying in turn that they are (balanced/skewed) basic robust digrafts.}

\b{See \Cref{fig:basic-non-robust-affine-critical} (left) for an illustration of a basic non-robust digraft. To see non-robustness, let $J\subset A$ be the set of bold arcs, and consider the dicut $\delta^+(U)$ for $U:=\{2,3,7,8\}$. Observe that $\1_J$ satisfies every (in)equality in the description of $F(D,\zF)$ except for $x(\delta^+(U))\geq 1$. This implies that the inequality $x(\delta^+(U))\geq 1$ is facet-defining for $F(D,\zF)$, but not equivalent to $x_a\geq 0,a\in A$, nor $x(\delta^+(u))\geq 1$ for any source $u$ of~$D$. 
}
	
\paragraph{Basic robust digrafts.}
For a basic robust digraft $(D,\zF)$, we prove that $F(D,\zF)$ is a very special polyhedron. To elaborate, for a polyhedron $P\subseteq \cR^n$ and $k\geq 0$, {let $kP:=\{kp:p\in P\}$}. \b{An integral polyhedron} $P$ has the \emph{integer decomposition property} if for every integer $k\geq 1$, every integral point in $kP$ can be written as the sum of $k$ integral points in $P$. The inequality description of $F(D,\zF)$, along with a classic result of de Werra~\cite{deWerra71} on balanced edge-colourings of bipartite graphs, allows for the following theorem.

\begin{theorem}\label{basic-robust-IDP}
Let $(D=(V,A),\zF)$ be a basic robust digraft. Then $F(D,\zF)$ has the integer decomposition property, and $\aff(F(D,\zF)) = \{x:Mx=\1\}$ for some $M\in \cZ^{m\times n}$ with $m\geq 1$.
\end{theorem}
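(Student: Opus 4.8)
The plan is to show that for a basic robust digraft $(D,\zF)$, the polyhedron $F(D,\zF)$ is (up to the equality constraints it lies in) defined solely by the inequalities $x_a\geq 0$ for $a\in A$ and $x(\delta^+(u))\geq 1$ for sources $u$ of $D$, and that this defining system is amenable to de~Werra's balanced edge-colouring theorem. First I would invoke the definition of robustness to write
$$F(D,\zF)=\aff(F(D,\zF))\cap\{x\in\cR^A: x\geq\0,\ x(\delta^+(u))\geq 1\ \text{for all sources }u\},$$
so the only genuine inequalities are nonnegativity and the source-degree inequalities; the remaining constraints (the dicut equalities from $\zF$, and any implied equalities) carve out the affine hull. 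I then need to argue that this affine hull has the claimed form $\{x:Mx=\1\}$ for an integral matrix $M$ with at least one row. Since every $U\in\zF$ contributes $x(\delta^+(U))=1$, and these are $0/1$ equations with right-hand side $1$, the affine hull is cut out by $0/1$ equations with all right-hand sides equal to $1$ (one should check that taking the row space over $\cZ$ of the tight constraints, one may choose a generating set whose members are $\{0,1\}$-combinatorial equalities of the form ``sum of certain coordinates $=1$'' — this uses that $F(D,\zF)\subseteq[0,1]^A$, that the tight $x_a\geq 0$ constraints just delete coordinates, and that $\zF$ is nonempty so $m\geq 1$). This handles the affine-hull half of the statement.

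For the integer decomposition property, fix an integer $k\geq 1$ and an integral point $z\in kF(D,\zF)$. The goal is to write $z=z^1+\dots+z^k$ with each $z^i$ an integral point of $F(D,\zF)$, i.e., the incidence vector of a minimal dijoin having degree one at every sink and degree $\geq 1$ at every source, and lying on all the $\zF$-facets. Here is where I would bring in de~Werra: the vector $z$, being integral, nonnegative, and satisfying $z(\delta^+(u))=k$ at each source $u$ (the source-degree constraints are tight at scale $k$ because they are tight on the relevant faces — more precisely $z(\delta^+(U))=k$ for every $U\in\zF$, and in a basic digraft the singletons $\{u\}$ for sinks and the co-singletons belong to $\zF$ or are handled by the equalities), describes a bipartite multigraph in which $z$ plays the role of an edge-multiplicity vector. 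View the arcs as edges of a bipartite graph between sources and sinks (legitimate since $D$ is bipartite), with multiplicities $z_a$. De~Werra's theorem gives a proper edge-colouring with $k$ colour classes that is \emph{balanced}: at every vertex, the numbers of edges of each colour differ by at most one. Because the degree of $z$ at each sink is exactly $k$ (it is $k$ times the sink-degree-one condition, and $V\setminus v\in\zF$ for each sink $v$ forces this), balancedness forces exactly one edge of each colour at every sink; because the degree of $z$ at each source is a multiple — no, is at least $k$ and the source constraint is $\geq$, balancedness gives at least one edge of each colour at every source. So each colour class is the incidence vector of an arc set $z^i$ with degree one at every sink and $\geq 1$ at every source — precisely a strengthening set / minimal dijoin of the required combinatorial type — and summing recovers $z$. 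It remains to check that each $z^i$ lies on the $\zF$-facets, i.e., $z^i(\delta^+(U))=1$ for all $U\in\zF$; this should follow because $z(\delta^+(U))=k$ and each of the $k$ colour classes contributes at least $1$ (since $\delta^+(U)$ for $U\in\zF$ is a dicut and any dijoin meets it, or: $V\setminus v\in\zF$ type constraints and balancedness again), so each contributes exactly $1$. Hence $z^i\in F(D,\zF)\cap\{0,1\}^A$ and we are done.

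The main obstacle I anticipate is the bookkeeping that makes de~Werra's theorem apply \emph{simultaneously} to all the $\zF$-constraints: one must verify that balancedness of a single $k$-edge-colouring yields exactly one edge of each colour across \emph{every} set $\delta^+(U)$ with $U\in\zF$ (not merely at sinks), and that this in turn places each colour class on all the $\zF$-facets. This requires understanding the combinatorial structure of $\zF$ in a basic digraft — in particular why the dicuts $\delta^+(U)$, $U\in\zF$, on which $F(D,\zF)$ is already constant, behave like ``degree constraints'' of a bipartite-graph-theoretic kind so that a \emph{single} balanced colouring controls all of them at once. Establishing that these dicuts form a laminar-like or otherwise well-structured family (which is plausibly where the ``basic'' and ``robust'' hypotheses are really used, beyond merely pinning down the inequality description) is the delicate point; once that structural fact is in hand, the rest is a fairly direct translation of de~Werra's balanced-colouring theorem into the integer decomposition property, together with the elementary observation that a $0/1$ equality system with right-hand side $\1$ describes the affine hull.
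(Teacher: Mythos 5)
Your approach is fundamentally the same as the paper's (de~Werra's balanced bipartite edge-colouring applied to the explicit inequality description coming from robustness), but the point you explicitly flag as ``the delicate point'' is in fact a genuine gap, and its resolution is much simpler than the laminar-family machinery you anticipate. By the very definition of a \emph{basic} digraft, every tight dicut $\delta^+(U)$ has $|U|\in\{1,|V|-1\}$; and every $U\in\zF$ yields a tight dicut (condition (d) in the digraft definition). Hence every $U\in\zF$ is a singleton $\{u\}$ (a source) or a co-singleton $V\setminus v$ (complement of a sink), so the $\zF$-constraints are \emph{literally} vertex-degree equalities $x(\delta(u))=1$. There is no further structure to establish: the whole face is $\{x\geq 0,\ x(\delta^+(v))\geq 1\ \text{for active sources } v,\ x(\delta(v))=1\ \text{for tight }v\}$, and a balanced $k$-edge-colouring in the bipartite multigraph already controls all of these degree constraints simultaneously. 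Because you never reduce the $\zF$-constraints to degree constraints, your argument that ``each colour class contributes at least 1 to $\delta^+(U)$'' has no foundation and, as written, is partly circular (you appeal to ``any dijoin meets a dicut'' while trying to show each colour class is a dijoin).

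A second, smaller issue is the affine hull. You contemplate that some $x_a\geq 0$ might be tight on $F(D,\zF)$ and claim this ``just deletes coordinates,'' but a tight constraint $x_a=0$ would put a row with right-hand side $0$ into the description of $\aff(F(D,\zF))$, which is incompatible with the claimed form $\{x:Mx=\1\}$. The paper rules this out via \Cref{arc-active-CO}: no coordinate is identically zero on $F(D,\zF)$, so the affine hull is cut out \emph{only} by the degree equalities $x(\delta(v))=1,\ v\in V^t$, giving the $Mx=\1$ form with $m=|V^t|\geq 1$ (all sinks are tight). Once these two points are supplied, the remainder of your proposal matches the paper's proof.
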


\Cref{main-digraft} for basic robust digrafts now follows from the following general-purpose result about polyhedra with the integer decomposition property. 
    
\begin{theorem}\label{IDP-integral-basis}
Let $P\subseteq \cR^n$ be a pointed polyhedron with the integer decomposition property, where $\aff(P) = \{x:Ax=b\}$ for $A\in \cZ^{m\times n},b\in \cZ^m$ such that $m \geq 1$, $b\neq \0$, and $\gcd\{b_i:i\in [m]\}=1$. Then $P\cap \cZ^n$ contains an integral basis for $\lin(P)$.
\end{theorem}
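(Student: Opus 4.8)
The plan is to deduce \Cref{IDP-integral-basis} from a combinatorial statement about integral points of polyhedra with the integer decomposition property, and to invoke the hypothesis $\gcd\{b_i:i\in[m]\}=1$ only at the end, in order to pass from an affine basis to a linear one. First I would record two preliminary facts: a polyhedron $Q$ with the integer decomposition property is integral --- apply the property to $kv$ for a vertex $v$ and a common denominator $k$, use that $kv$ is then a vertex of $kQ$, and conclude $v\in\cZ^n$ --- and every face of such a $Q$ again has the integer decomposition property, since in any decomposition $z=\sum_{i=1}^{k}z_i$ with $z_i\in Q\cap\cZ^n$ of an integral point $z$ of $k$ times a face $\{x\in Q:c^\top x=\gamma\}$ (with $c^\top x\le\gamma$ valid), every $z_i$ lies on that face. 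The heart of the argument is the claim $(\star)$: \emph{for every polyhedron $Q$ with the integer decomposition property, $Q\cap\cZ^n$ contains an affine basis of the lattice $\aff(Q)\cap\cZ^n$}, i.e.\ affinely independent integral points $q_0,\dots,q_d\in Q$, where $d=\dim\aff(Q)$, whose difference vectors $q_1-q_0,\dots,q_d-q_0$ form a lattice basis of the group of integral vectors parallel to $\aff(Q)$.

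Granting $(\star)$, the theorem follows quickly. Since $b\neq\0$ we have $\0\notin\aff(P)$, so $\dim\lin(P)=d+1$. For $x\in\lin(P)$, $Ax=\mu(x)\,b$ for a unique scalar $\mu(x)$; the map $\mu$ is linear, $\mu\equiv1$ on $P$, and $\ker\mu$ is the linear space parallel to $\aff(P)$, hence $\mu$ vanishes on the lattice $\Lambda_0$ of integral vectors parallel to $\aff(P)$. Picking $c\in\cZ^m$ with $c^\top b=1$, every integral $w\in\lin(P)$ satisfies $\mu(w)=c^\top(Aw)\in\cZ$, so $\lin(P)\cap\cZ^n=\Lambda_0\oplus\cZ q_0$ for any integral point $q_0\in P$. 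Applying $(\star)$ to $P$ gives $q_0,\dots,q_d\in P\cap\cZ^n$ with $\{q_i-q_0:i\in[d]\}$ a basis of $\Lambda_0$; then $\{q_0\}\cup\{q_i-q_0:i\in[d]\}$ is a basis of $\lin(P)\cap\cZ^n$, and the unimodular substitution $q_i-q_0\mapsto q_i$ shows that $\{q_0,q_1,\dots,q_d\}\subseteq P\cap\cZ^n$ is itself a basis of $\lin(P)\cap\cZ^n$, i.e.\ an integral basis for $\lin(P)$.

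For $(\star)$ I would induct on $d=\dim\aff(Q)$, the base case $d=0$ being integrality of $Q$. For $d\ge1$, a pointed $Q$ has a facet $G$; it has the integer decomposition property and dimension $d-1$, so by induction $G\cap\cZ^n$ contains an affine basis $q_1,\dots,q_d$ of $\aff(G)\cap\cZ^n$. Let $\Psi$ be the affine functional on $\aff(Q)$, unique up to sign, that vanishes on $\aff(G)$ and is integer-valued and surjective onto $\cZ$ on $\aff(Q)\cap\cZ^n$, oriented so $\Psi\ge0$ on $Q$. As the sublattice of $\Lambda_0$ parallel to $\aff(G)$ is saturated with cyclic quotient detected by $\Psi$, a short computation shows that $\{q_0,q_1,\dots,q_d\}$ is an affine basis of $\aff(Q)\cap\cZ^n$ as soon as $q_0\in Q\cap\cZ^n$ satisfies $\Psi(q_0)=1$. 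Thus $(\star)$, and with it the theorem, reduces to the following sublemma, which I expect to be the main obstacle: \emph{if $Q$ has the integer decomposition property, $G$ is a facet of $Q$, and $\Psi$ is as above, then $\Psi(q)=1$ for some $q\in Q\cap\cZ^n$.}

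For the sublemma I would first apply an affine unimodular change of coordinates of $\cR^n$ so that $\aff(Q)$ becomes a coordinate subspace and $\Psi$ becomes the first coordinate $x_1$; then $Q$ is a full-dimensional polyhedron in $\cR^d$ with the integer decomposition property, lying in $\{x_1\ge0\}$, with $G$ lying in $\{x_1=0\}$. Fix a $(d-1)$-ball $B$ in the relative interior of $G$; since $\mathrm{relint}(G)$ lies on no facet of $Q$ besides $G$, the prism $B+[0,\varepsilon_0]e_1$ lies in $Q$ for some $\varepsilon_0>0$, so $Q\cap\{x_1=\tfrac1N\}\supseteq B+\tfrac1N e_1$, and hence $NQ\cap\{x_1=1\}=N\bigl(Q\cap\{x_1=\tfrac1N\}\bigr)$ contains the $(d-1)$-ball $NB+e_1$, whose radius grows linearly in $N$. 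Since the integral points with $x_1=1$ form a full-rank affine lattice in the $(d-1)$-dimensional subspace $\{x_1=1\}$, for $N$ large this ball contains an integral point $z_N\in NQ\cap\cZ^n$; decomposing $z_N=\sum_{i=1}^{N}r_i$ with $r_i\in Q\cap\cZ^n$ via the integer decomposition property, the numbers $x_1(r_i)$ are nonnegative integers summing to $x_1(z_N)=1$, so exactly one $r_i$ has $x_1(r_i)=1$, which is the point we want. This last step is where the integer decomposition property is indispensable --- Reeve-type simplices carry primitive functionals that skip intermediate integer values, so the statement fails without it.
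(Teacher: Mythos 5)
Your proof is correct, and it takes a genuinely different route from the paper's. The paper proves this theorem in two steps, both of which pass through Hilbert bases: first it shows (its Theorem 2.3, from the integer decomposition property together with the fact that $A\bar{x}=kb$ forces the total multiplier $k$ to be an integer when $\gcd\{b_i\}=1$) that $P\cap\cZ^n$ is an integral generating set for the cone $\cone(P\cap\cZ^n)$, and then invokes the Gerards--Seb\H{o} theorem that an IGSC generating a pointed cone contains an integral basis for its linear hull. You instead prove the stronger-looking structural claim $(\star)$ --- that for any \emph{pointed} polyhedron $Q$ with IDP, $Q\cap\cZ^n$ contains an affine lattice basis of $\aff(Q)\cap\cZ^n$ --- by induction on $\dim\aff(Q)$, the inductive step resting on a nice geometric sublemma (IDP forces an integral point of $Q$ at ``height $1$'' over any facet, found by decomposing an integral point of $N Q$ inside a dilated prism). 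The $\gcd$ condition enters only at the very end, via $c^\top b=1$, to convert the affine basis into a linear one; this mirrors where the paper uses it, but in a different form. Your route is more self-contained (it replaces the black-box appeal to Gerards--Seb\H{o} with an explicit lattice/covering-radius argument) and gives a clear geometric picture of why IDP matters; the paper's route is shorter given the external result and slots into the IGSC machinery it needs elsewhere. Two small points worth cleaning up: state $(\star)$ for \emph{pointed} $Q$ (your inductive step needs the existence of a facet, and a face of a pointed polyhedron is pointed, so this costs nothing where you apply it), and note that the affine unimodular normalization uses both that $\aff(Q)\cap\cZ^n$ is a full-rank lattice in $\aff(Q)$ (which holds because $Q$ is integral) and that $\aff(G)\cap\cZ^n$ is a saturated sublattice --- you gesture at this but it deserves a sentence.
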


\Cref{IDP-integral-basis} is obtained by first proving that $P\cap \cZ^n$ forms an \emph{integral generating set for a cone}, better known as a \emph{Hilbert basis}, and then using a result of Gerards and Seb\H{o}~\cite{Gerards87} about such sets to finish the proof.

\b{It must be mentioned that the GCD condition in \Cref{IDP-integral-basis} implies that `there exists $u\in \cZ^n$ such that $\aff(P)\subseteq \{x:u^\top x = 1\}$'. Furthermore, the converse of this implication also holds, in the sense that by forcing $u^\top x=1$ to be one of the rows of $Ax=b$, we can ensure that the GCD condition also holds. Some readers may find this condition easier to understand, as it is independent of any equality description of $\aff(P)$.} 

\b{Before we wrap the discussion on the integer decomposition property, it is worthwhile to mention that this is an important property in a number of areas of mathematics other than integer programming and the geometry of numbers, such as commutative algebra and algebraic geometry; see~\cite{Haase21} for a recent relevant article. For an introduction to the topic, we refer the reader to~\cite{Schrijver98}, \S22.10.}
	
\paragraph{Basic non-robust digrafts.} In the remaining case, where the basic digraft $(D=(V,A),\zF)$ is not robust, $F(D,\zF)$ has a facet-defining dicut inequality $x(\delta^+(U))\geq 1$ that is not equivalent to $x(\delta^+(u))\geq 1$ for any source $u$. We decompose the digraft into two pieces along the dicut $\delta^+(U)$, called the `$(U,V\setminus U)$-contractions' of $(D,\zF)$. Each of the two $(U,V\setminus U)$-contractions is again a digraft, so by induction, we may pick integral bases $B_1,B_2$ for the two pieces, and compose them in a natural way to obtain a linearly independent set $B'\subseteq F(D,\zF)\cap \{0,1\}^A$. However, there are two key challenges to turn $B'$ into an integral basis $B$. First, $B'$ is at least one vector away from forming a linear basis for $\lin(F(D,\zF))$, and a priori, we do not know the number of extra vectors we would need to add. Secondly, a linear basis is a long way from an integral one, so we need to extend $B'$ very carefully. We have two lemmas that address these issues.

\begin{figure}[ht]
\centering
	\includegraphics[scale=0.4]{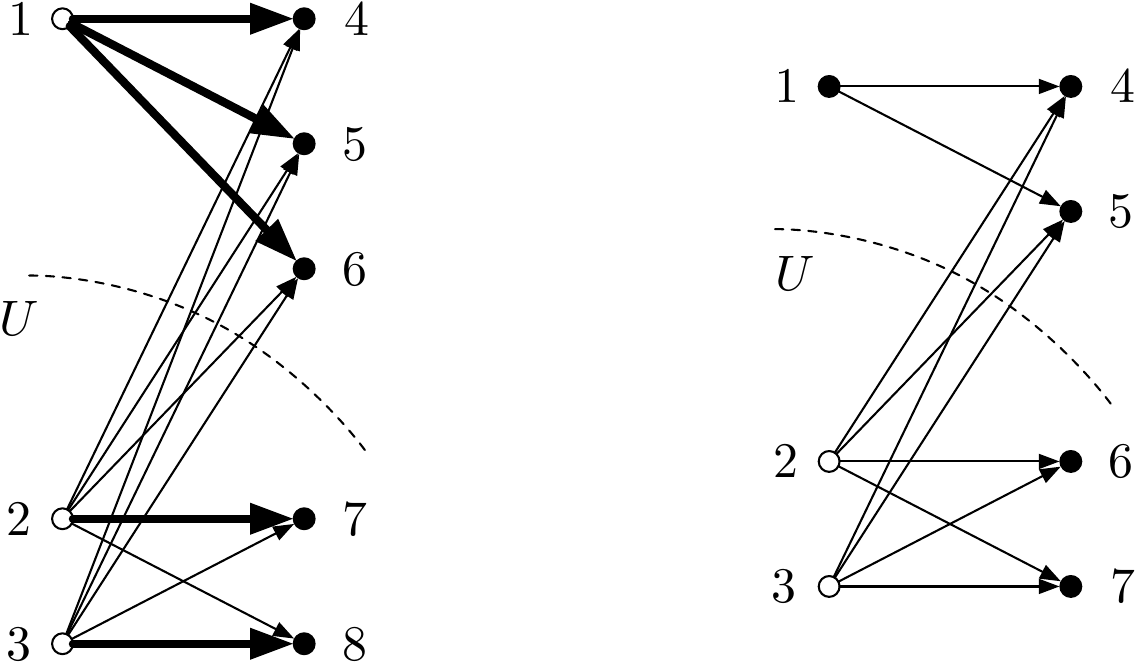}
	\caption{\b{(Left) 
	A basic non-robust digraft $(D,\zF)$ where $\zF$ consists only of $V\setminus v$ for all sinks $v$. 
	(Right) 
	A affine critical digraft $(D,\zF)$ where $\zF$ consists only of $V\setminus v$ for all sinks $v$. This digraft is not basic. 
	 In both figures, the filled-in/non-filled-in correspond to the tight/active nodes.
	} 
		}
	\label{fig:basic-non-robust-affine-critical}
\end{figure}

The first issue stems from the fact that the two $(U,V\setminus U)$-contractions are not necessarily basic digrafts. This can be fatal as we could lose our guarantee on the size of $|B\setminus B'|$. However, we will be able to prove that both of these pieces share a key property with basic digrafts, thus allowing us to guarantee that $|B\setminus B'|=1$. To describe this property, we need a couple of definitions.
	
\begin{DE}[tight and active nodes]
Let $(D=(V,A),\zF)$ be a digraft. A node $u\in V$ is \emph{tight} for the digraft if $F(D,\zF)\subseteq \{x:x(\delta(v))=1\}$; the node is \emph{active} for the digraft if it is not tight.
\end{DE}
	
Note that all sinks of a digraft are tight, i.e., every active node is a source. Note further that while singletons and complements of singletons in $\zF$ give rise to tight nodes, there may be more (implied) tight nodes. \b{In \Cref{fig:digrafts}, the filled-in/non-filled-in nodes correspond to the tight/active nodes, respectively. Observe that for the middle digraft, we may drop $\{1\},\{2\}$ from $\zF$ without changing $F(D,\zF)$; in doing so, the digraft would no longer be basic, as $1,2$ would be (implied) tight nodes not from $\zF$.
}
	
\begin{DE}[affine critical digraft]
Let $(D=(V,A),\zF)$ be a digraft, and let $V^t$ be the set of tight nodes. The digraft is \emph{affine critical} if $\aff(F(D,\zF))=\big\{x:x(\delta(v))=1,\,\forall v\in V^t\big\}$.
\end{DE}

\b{Every basic digraft is clearly affine critical. See \Cref{fig:basic-non-robust-affine-critical} (right) for an illustration of an affine critical digraft that is not basic. To see this, note that for $U:=\{2,3,6,7\}$, $\delta^+(U)$ is the only non-trivial minimal dicut. For all $x\in F(D,\zF)$, as $x(\delta(4))=x(\delta(5))=1$, and $x(\delta(1)),x(\delta^+(U))\geq 1$, it follows that $x(\delta(1))=x(\delta^+(U))=1$. Subsequently, this digraft is not basic. However, the digraft is affine critical, because $x(\delta^+(U))=1$ is implied by tight nodes: $$x(\delta^+(U)) = x(\delta(4))+x(\delta(5))-x(\delta(1))=1.$$}

The following important lemma addresses the first issue mentioned above.

\begin{LE}[Affine Critical Lemma]\label{affine-critical-LE}
Let $(D=(V,A),\zF)$ be a basic digraft that is not robust. Let $x(\delta^+(U))\geq 1$ be a facet-defining dicut inequality for $F(D,\zF)$ that is not equivalent to $x(\delta^+(u))\geq 1$ for any active source~$u$. Then $(D,\zF)$ and its $(U,V\setminus U)$-contractions are affine critical digrafts each of which contains at least one active source. Furthermore, for $i\in \{1,2\}$, every active source for $(D,\zF)$ in $U_{3-i}$ is an active source for $(D_i,\zF_i)$, and vice versa.
\end{LE}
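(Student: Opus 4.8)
The plan is to prove the three assertions in sequence, leaning heavily on the hypotheses that $(D,\zF)$ is basic and non-robust, and that $\delta^+(U)$ is a facet-defining dicut inequality that is \emph{not} equivalent to $x(\delta^+(u))\ge 1$ for any active source $u$. First I would record the structural consequences of these hypotheses. Since $(D,\zF)$ is basic, any dicut $\delta^+(W)$ with $F(D,\zF)\subseteq\{x:x(\delta^+(W))=1\}$ must have $W\in\zF$ and $|W|\in\{1,|V|-1\}$; in particular the given $\delta^+(U)$ with $F(D,\zF)\cap\{x:x(\delta^+(U))=1\}$ a facet does \emph{not} have $F(D,\zF)$ lying entirely on $\{x(\delta^+(U))=1\}$, so the facet is proper. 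I would then set up the $(U,V\setminus U)$-contractions $(D_1,\zF_1)$ and $(D_2,\zF_2)$ precisely: $D_1$ is obtained by contracting $V\setminus U$ to a single node and $D_2$ by contracting $U$, with $\zF_i$ the induced families (plus the forced complements-of-sinks), and I would verify that each is indeed a digraft — checking conditions (a)–(d) in the definition, where (d) is where the facet-definingness of $\delta^+(U)$ is used to guarantee the contracted face is nonempty, and where the fact that $\delta^-(U)=\emptyset$ keeps the contractions bipartite with $2$-edge-connected underlying graph.

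Next, to show $(D,\zF)$ itself is affine critical: let $V^t$ be its tight nodes. The containment $\aff(F(D,\zF))\subseteq\{x:x(\delta(v))=1,\ \forall v\in V^t\}$ is immediate from the definition of tight. For the reverse, I would argue by contradiction using non-robustness. If the reverse containment failed, $F(D,\zF)$ would have a facet-defining inequality $c^\top x\ge d$ that is not implied by the $x(\delta(v))=1$ equalities; since $(D,\zF)$ is basic, the facet-defining inequalities for $F(D,\zF)$ are among $x_a\ge 0$ and the dicut inequalities $x(\delta^+(W))\ge 1$, and one shows that the $x_a\ge 0$ facets and the "source" dicut facets $x(\delta^+(u))\ge1$ are already accounted for, so what remains is precisely a non-source facet-defining dicut — which is what non-robustness asserts exists, and which is exactly $\delta^+(U)$ (up to equivalence). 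The point is that the tight nodes capture everything \emph{except} such dicuts; a clean way to see $\aff(F(D,\zF))$ is cut out by the tight-node equalities is to observe that any equality valid on $F(D,\zF)$ is a combination of the facet inequalities set to equality, and a dicut $x(\delta^+(W))\ge1$ can be valid-at-equality on all of $\aff$ only if $W$ is a tight singleton/cosingleton by basicness. I expect the bookkeeping here — translating "facet-defining inequalities are of the listed forms" into "the affine hull is cut out exactly by tight-node degree equations" — to be somewhat delicate but routine once the right combinatorial dictionary between dicuts, tight nodes, and active sources is in place.

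For affine criticality of the contractions $(D_i,\zF_i)$: I would push the above argument through the contraction. A face-defining dicut of $F(D_i,\zF_i)$ lifts to a dicut of $F(D,\zF)$ refining $\delta^+(U)$, and conversely; using that $(D,\zF)$ is basic one argues the contracted digrafts cannot acquire a "bad" non-tight non-source dicut facet beyond what is forced, so they too are affine critical. The existence of an active source in each piece: $(D_i,\zF_i)$ has an active source because if every source of $D_i$ were tight, $F(D_i,\zF_i)$ would be a single point or a polytope with no $x(\delta^+(u))\ge1$ facet in "free" position, contradicting that $\delta^+(U)$ contributes a genuine non-source facet that survives in the contraction. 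Finally, the last sentence — that active sources of $(D,\zF)$ on the $U_{3-i}$ side correspond exactly to active sources of $(D_i,\zF_i)$ — I would prove by directly comparing the faces: a source $u$ on, say, the $V\setminus U$ side has $\delta(u)$ entirely inside $D_1$ (contraction of $V\setminus U$ doesn't touch arcs at $u\in U$... careful with sides here), so $u$ is tight for $(D,\zF)$ iff the degree-one equation at $u$ is valid on $F(D,\zF)$ iff it is valid on its projection $F(D_1,\zF_1)$, the equivalence using that the two contracted faces "glue" along $\delta^+(U)$ to recover $F(D,\zF)$ in the sense that validity of an equation supported on one side transfers. The main obstacle, I anticipate, is precisely this gluing step: making rigorous that an equation supported on arcs of one side is valid on the big face if and only if it is valid on the corresponding contraction, which requires knowing that $F(D,\zF)$ decomposes compatibly over the two $(U,V\setminus U)$-contractions — essentially a product/fiber-sum structure of the faces across the facet $\delta^+(U)$ — and verifying there is no "interaction" term obstructing the transfer. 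Everything else reduces to careful but standard polyhedral and digraph-connectivity arguments.
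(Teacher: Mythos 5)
Your proposal correctly identifies the tasks and the overall decompose-along-$\delta^+(U)$ strategy, but it leaves the central difficulty unresolved. The paper's route is via the slack: $s(D,\zF)=0$ iff affine critical, and basicness gives $s(D,\zF)=0$ immediately (no non-trivial tight dicuts); the real work is \Cref{slack-cut-contraction}, which shows the contractions also have slack zero, contain active sources, and identifies their tight nodes exactly. You propose instead to \emph{push the basic-digraft argument through the contraction}. This does not work, because the contractions need not be basic. After contracting along $\delta^+(U)$, new tight dicuts can appear: a dicut $\delta_{D_i}(W)$ may be tight for $F(D_i,\zF_i)$ even though $\delta_D(W)$ is \emph{not} tight for $F(D,\zF)$ (it is only tight on the sub-face $F(D,\zF)\cap\{x:x(\delta^+(U))=1\}$). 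Ruling out that such a $W$ contributes a new independent equality is exactly the hard step, and your sketch merely asserts that ``the contracted digrafts cannot acquire a `bad' facet beyond what is forced'' without giving any reason why. The paper closes this gap by: assuming $s(D_i,\zF_i)\geq 1$, invoking \Cref{slack>=1} to produce a non-trivial tight dicut $\delta_{D_i}(W)$ separating two active sources, deducing that $x(\delta_D(W))\geq 1$ and $x(\delta_D(U_i))\geq 1$ must define the same facet of $F(D,\zF)$ (since $(D,\zF)$ is basic so $\delta_D(W)$ cannot be tight), and then obtaining a contradiction via a linear-independence argument on the cut vectors using three pairwise disjoint sets of active sources (one in $W$, one in $U_i$, and one in $V\setminus(W\cup U_i)$). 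Nothing in your proposal substitutes for this.

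Two further gaps: (i) Your argument that each $U_i$ contains an active source (``no free facet'') is not the right reason; the correct observation is that if $U_i$ had no active source then $x(\delta^+(U))$ would be identically $\pm\disc(U_i)$ on $F(D,\zF)$, contradicting that $x(\delta^+(U))\geq 1$ is facet-defining, hence cuts out a \emph{proper} face. (ii) Your ``gluing/transfer of validity'' sketch for the active-source correspondence omits the key use of the hypothesis that $\delta^+(U)$ is not equivalent to a source facet: showing $x(\delta(u))=1$ on $F(D,\zF)\cap\{x(\delta^+(U))=1\}$ only gets you tightness on the facet, and you must then invoke that hypothesis to promote this to tightness on all of $F(D,\zF)$.
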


This lemma is a byproduct of a careful analysis of the dimension of $F(D,\zF)$, and the study of a characteristic quantity of a digraft that we call the \emph{slack}.\medskip

The Affine Critical Lemma guarantees that to turn $B'$ into a linear basis for $\lin(F(D,\zF))$, we just need to add one more vector $b$ from $F(D,\zF)\cap \{0,1\}^A$, which must inevitably satisfy $b(\delta^+(U))>1$. As it turns out, integrality of the basis can be guaranteed if $b(\delta^+(U))=2$, whose existence will be guaranteed by the following lemma, thus addressing the second issue. This lemma is ultimately enabled by the Exchange Axiom for $M$-convex sets.

\begin{LE}[Jump-Free Lemma]\label{jump-free-LE}
Let $(D=(V,A),\zF)$ be a digraft, let $\delta^+(U)$ be a dicut, and let $x,y\in F(D,\zF)\cap \{0,1\}^A$ where $\lambda_1:=x(\delta^+(U))<y(\delta^+(U))=:\lambda_2$. Then for any integer $\lambda\in(\lambda_1,\lambda_2)$, there exists $z\in F(D,\zF)\cap \{0,1\}^A$ such that $z(\delta^+(U))=\lambda$.
\end{LE}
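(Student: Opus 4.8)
The statement is exactly the kind of "no gaps" property one expects from an $M$-convex set, so the plan is to recognize $F(D,\zF)\cap\{0,1\}^A$ as (a projection of) an $M$-convex set and then run a single exchange step at a time to walk from $x$ to $y$, decreasing $x(\delta^+(U))$'s "gap to $\lambda$" by one at each step. Concretely: every point of $F(D,\zF)\cap\{0,1\}^A$ is the indicator vector of an arc set $J$ with $|J\cap\delta(v)|=1$ at each sink $v$ and $|J\cap\delta(v)|\geq 1$ at each source $u$; contracting all sinks (or rather, viewing $J$ through the "designated-arc-at-each-sink" encoding) identifies these indicator vectors with the bases of a transversal-type matroid, or more robustly, with an $M$-convex set: the set $\mathcal B$ of degree vectors $(|J\cap\delta(u)|)_{u\,\text{source}}$ realizable by such $J$, together with the choice data, satisfies the symmetric exchange axiom. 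I would first isolate the precise $M$-convex structure needed — I expect it is cleanest to argue directly on $\{0,1\}^A$: the set $\mathcal J$ of feasible arc sets $J$ (i.e. $\1_J\in F(D,\zF)\cap\{0,1\}^A$) has the property that for $J_1,J_2\in\mathcal J$ and any arc $a\in J_1\setminus J_2$ there is an arc $b\in J_2\setminus J_1$ with $J_1-a+b\in\mathcal J$ and also $J_2+a-b\in\mathcal J$. This follows because $F(D,\zF)$ is a face of the dijoin polyhedron of a bipartite digraph cut out by equalities $x(\delta^+(W))=1$, and minimal dijoins here are strengthening sets with one arc at each sink; the combinatorial exchange is a standard alternating-path argument in the bipartite "sink-side" structure, using $2$-edge-connectedness to reroute.

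With the exchange axiom in hand, the proof is a short induction on $\lambda_2-\lambda$ (equivalently on $y(\delta^+(U))-\lambda$). If $\lambda=\lambda_2$ we are done with $z=y$. Otherwise $\lambda<\lambda_2$, so pick any arc $a\in\delta^+(U)$ with $a\in J_y\setminus J_x$ (this exists since $y(\delta^+(U))=\lambda_2>\lambda_1=x(\delta^+(U))$ forces $|J_y\cap\delta^+(U)|>|J_x\cap\delta^+(U)|$, hence some such $a$). Apply the exchange axiom to $J_y, J_x$ and the arc $a\in J_y\setminus J_x$: there is $b\in J_x\setminus J_y$ with $J':=J_y-a+b\in\mathcal J$. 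Now track the value on $\delta^+(U)$: removing $a$ drops it by one; adding $b$ raises it by one only if $b\in\delta^+(U)$, but $b\in J_x$, so the number of $\delta^+(U)$-arcs can go up by at most the (small) amount controlled by $x$'s value there — more carefully, I want the invariant $x(\delta^+(U))\le \1_{J'}(\delta^+(U))\le y(\delta^+(U))$ and $\1_{J'}(\delta^+(U))\in\{\lambda_2-1,\lambda_2\}$. To force a genuine decrease I would instead choose $a\in J_y\setminus J_x$ with $a\in\delta^+(U)$ AND require the exchange partner $b\notin\delta^+(U)$; the symmetric exchange axiom for $M$-convex sets lets one prescribe which of the two sets gains which element, and a counting argument ($J_x$ has fewer $\delta^+(U)$-arcs than $J_y$) guarantees that the alternating structure used in the exchange can be chosen to exit $\delta^+(U)$, so that $\1_{J'}(\delta^+(U))=\lambda_2-1$. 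Replacing $y$ by $J'$ and iterating drops the value one unit at a time down to $\lambda$, at which point we stop and output $z$. Each intermediate $J'$ stays in $F(D,\zF)\cap\{0,1\}^A$ by construction.

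The main obstacle I anticipate is not the induction but pinning down the exchange step with control over the cut $\delta^+(U)$: the bare Exchange Axiom for $M$-convex sets gives a partner element but not a priori one outside a prescribed arc set. I would handle this by working with the stronger \emph{simultaneous} exchange property (exchange an element out of $J_y$ and into $J_x$ at once, or a short path version), and by exploiting that $\delta^+(U)$ is a \emph{dicut} — all its arcs point the same way across $U$ — so an alternating walk in the symmetric difference $J_x\triangle J_y$ that uses an arc of $\delta^+(U)$ must, by parity of crossings of $U$, also use an arc entering or leaving $U$ the other way, and one of those lies in $J_x\setminus J_y$ and outside $\delta^+(U)$ (it would be in $\delta^-(U)=\emptyset$, contradiction — so in fact the walk stays on one side and we recount). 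This parity/cut bookkeeping, rather than any deep new idea, is where the care goes; once it is set up, the "no gaps" conclusion drops out immediately. If the simultaneous-exchange formulation of $M$-convexity is the one already available in the paper's toolbox, I would phrase the whole argument in those terms to keep it to a few lines.
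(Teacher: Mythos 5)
Your instinct that $M$-convexity is the engine is right, and your induction template is the right shape, but the proposal has a genuine gap in where the $M$-convex exchange is applied, and the fix you sketch doesn't close it.

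You try to run the exchange directly on the arc sets $J\subseteq A$ with $\1_J\in F(D,\zF)\cap\{0,1\}^A$, asserting a symmetric exchange property for this family. This is not established in the paper, and it is not a consequence of anything you cite; the $M$-convexity in this setting lives one level down, on the \emph{degree vectors} $z_v:=|J\cap\delta(v)|-1$, i.e.\ on $P(D,\zF)\cap\cZ^V$, which is the integer point set of an integral base polyhedron. Two distinct arc sets can share the same degree vector, so the arc-set family is a fibered preimage of the $M$-convex set, not obviously $M$-convex itself. Worse, you yourself identify the second problem: even granting some exchange on arc sets, ``the bare Exchange Axiom gives a partner element but not a priori one outside a prescribed arc set,'' so you cannot guarantee that one swap decreases $|J\cap\delta^+(U)|$ by exactly one. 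Your proposed repair via a ``simultaneous exchange'' plus a parity/cut argument on $J_x\triangle J_y$ is vague, and your own parenthetical (the walk ``would be in $\delta^-(U)=\emptyset$, contradiction — so in fact the walk stays on one side and we recount'') signals that the bookkeeping doesn't close; in the underlying undirected graph an alternating walk can re-cross $U$ arbitrarily many times using only arcs of $\delta^+(U)$, so the parity count does not deliver a partner arc outside the dicut.

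The paper avoids all of this by \emph{not} trying to control the exchange. It maps $x,y$ to degree vectors $z^1,z^2\in P(D,\zF)\cap\cZ^V$ and applies the Exchange Axiom there to get a chain $z^1=t^1,\dots,t^k=z^2$ with $t^{i+1}=t^i-\1_{u_i}+\1_{v_i}$. The one observation that does all the work is that $t^i(U)-t^{i+1}(U)=\1_{u_i}(U)-\1_{v_i}(U)\in\{-1,0,1\}$; since $t^i(U)-\disc(U)$ recovers $x(\delta^+(U))$ at the endpoints, the sequence of values necessarily hits every integer in $[\lambda_1,\lambda_2]$ — no control over the exchange pair is needed. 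Then, for the chosen intermediate $t^i$, Lemma~\ref{perfect-b-matching-LE}(1) (existence of a perfect $b$-matching for $b=\1+t^i$) realizes $t^i$ as some $\1_J\in F(D,\zF)\cap\{0,1\}^A$ with $|J\cap\delta^+(U)|=\lambda$. That lifting step is the piece your proposal is missing: you are trying to interpolate arc sets, when it suffices to interpolate degree vectors and re-realize each one from scratch.
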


\paragraph{Outline of the paper.} We start off in \S\ref{sec:IGSC-IDP} by studying the integer decomposition property, and proving \Cref{IDP-integral-basis}. In \S\ref{sec:jump-free-lemma}, we discuss the exchange axiom for $M$-convex sets, and use it to prove the Jump-Free Lemma. In \S\ref{sec:dimension-dijoins-faces}, we introduce the `slack' of a digraft, study the dimension of the faces of the dijoin polyhedron, and then prove the Affine Critical Lemma. These three sections can be read independently from one another. \Cref{IDP-integral-basis} and \Cref{main-digraft} are then proved in \S\ref{sec:main-proof}. The final section \S\ref{sec:apps} is dedicated to proving \Cref{scr-theorem} and its three applications; \b{it also includes a conjecture on extending this theorem to all faces of $\scr(D)$.}

As a last note, it should be acknowledged that our work is heavily inspired by the works of Edmonds, Lov\'{a}sz, and Pulleyblank~\cite{Edmonds82}, Lov\'{a}sz~\cite{Lovasz87}, and  Carvalho, Lucchesi, and Murty~\cite{Carvalho02} on the matching lattice of a matching-covered graph. 

\section{IGSCs and the integer decomposition property}\label{sec:IGSC-IDP}

A finite set $H\subseteq \cZ^n$ is an \emph{integral generating set for a subspace (IGSS)} if every integral vector in $\lin(H)$ can be written as an integer linear combination of the vectors in $H$. A finite set $H\subseteq \cZ^n$ is an \emph{integral generating set for a cone (IGSC)} if every integral vector in $\cone(H)$ can be written as an integral conic combination of the vectors in $H$ (\cite{Abdi24-TDD}, \S7).\footnote{Sometimes $H$ is referred to as a \emph{Hilbert basis}, but we refrain from using this terminology as it can be confusing.} It can be readily checked that every IGSC is also an IGSS (see \cite{Abdi24-TDD}, \S7). We have the following theorem, which is essentially due to Gerards and Seb\H{o}~\cite{Gerards87}.
 
\begin{theorem}\label{IGSC-integral-basis}
Let $H\subseteq \cZ^n$ be an IGSC such that $\cone(H)$ is pointed. Then $H$ contains an integral basis for $\lin(H)$.
\end{theorem}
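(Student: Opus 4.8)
The plan is to reduce to the full-dimensional case and then invoke the result of Gerards and Seb\H{o}. Let $L := \lin(H)$ and let $d := \dim L$. Since $H \subseteq \cZ^n$ spans a rational subspace $L$, the lattice $\Lambda := L \cap \cZ^n$ is a lattice of rank $d$; pick any $\cZ$-basis $e_1, \dots, e_d$ of $\Lambda$ and let $T$ be the $n \times d$ integer matrix with these columns. The map $T$ gives an isomorphism $\cZ^d \to \Lambda$, and each vector $h \in H$ has a unique preimage $h' = T^{-1}h \in \cZ^d$ (well-defined since $h \in \Lambda$). Let $H' := \{h' : h \in H\} \subseteq \cZ^d$. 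The key observation is that $T$ carries cones to cones and lattices to lattices faithfully: $\cone(H') = T^{-1}(\cone(H))$, $\lin(H') = \cZ^d \otimes \cQ = \cQ^d$ is full-dimensional, $\cone(H')$ is pointed (since $\cone(H)$ is and $T$ is injective on $L$), and a vector $v \in \cZ^d$ lies in $\cZ^d \cap \cone(H')$ iff $Tv \in \cZ^n \cap \cone(H)$, in which case the IGSC property of $H$ gives $Tv = \sum_{h} \mu_h h$ with $\mu \in \cZ_{\geq 0}^H$, hence $v = \sum_h \mu_h h'$ by injectivity. So $H'$ is an IGSC in $\cZ^d$ with $\cone(H')$ pointed and full-dimensional.

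Now I would apply the Gerards--Seb\H{o} theorem \cite{Gerards87} to $H'$: a pointed IGSC in $\cZ^d$ that is full-dimensional contains a subset $B'$ of $d$ linearly independent vectors forming an integral basis for $\cZ^d$ (equivalently, $\det(B') = \pm 1$). Pulling back, $B := \{Tb' : b' \in B'\} \subseteq H$ is linearly independent (as $T$ is injective on $L$), has $|B| = d = \dim L$, and spans $\Lambda = L \cap \cZ^n$ over $\cZ$: indeed for any $w \in L \cap \cZ^n$ we have $T^{-1}w \in \cZ^d = \cZ B'$, so $w \in \cZ B$. Hence $B$ is the desired integral basis for $\lin(H)$.

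The main obstacle — really the only non-routine point — is to pin down exactly which statement from \cite{Gerards87} is being used and to confirm it is phrased for (or immediately implies the case of) full-dimensional pointed Hilbert bases, namely that such a Hilbert basis contains a unimodular subset. If their result is stated only for the generating set of a pointed cone without the full-dimensionality hypothesis, one checks that passing to $\lin(H)$ and the sublattice $\Lambda$ as above makes it full-dimensional, so no generality is lost; if it is stated with an explicit lattice $\Lambda$ rather than $\cZ^d$, the change of coordinates $T$ is precisely what normalizes $\Lambda$ to $\cZ^d$. Everything else is bookkeeping: verifying that $T^{-1}$ respects membership in cones and conic combinations with integer coefficients, which is immediate from linearity and injectivity of $T$ on $L$.
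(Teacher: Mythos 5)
Your proof is correct and takes essentially the same route as the paper: both are, at bottom, a direct application of the Gerards--Seb\H{o} result. The paper invokes their statement (2) in full generality — a pointed IGSC of rank $r$ contains $r$ linearly independent vectors that themselves form an IGSC — and then concludes via the observation that every IGSC is also an IGSS, so a linearly independent IGSC is exactly an integral basis. You instead insert an explicit unimodular change of coordinates $T:\cZ^d\to\Lambda$ to normalize to the full-dimensional case before quoting Gerards--Seb\H{o}, then pull the unimodular subset back through $T$. This normalization is a harmless extra step (all the transport-of-structure verifications you sketch are correct), and it is a sensible hedge if one is unsure of the exact phrasing in \cite{Gerards87}; the paper simply cites the result in a form already covering arbitrary rank, which makes the argument a one-liner.
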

\begin{proof}
Let $H\subseteq \cZ^n$ be an IGSC such that $\cone(H)$ is pointed. Let $r$ be the rank of $H$. It follows from (\cite{Gerards87}, (2)) that there exist linearly independent vectors $h_1,\ldots,h_r$ in $H$ such that $B:=\{h_1,\ldots,h_r\}$ is an IGSC, hence an IGSS. Thus, $B$ is an integral basis, as required.
\end{proof}

Note that the condition that $\cone(H)$ is pointed is necessary. For example, $\{-2,3\}$ forms an IGSC, but it does not contain an integral basis for~$\cR$.

Let $g\in \cZ_{\geq 1}$. A finite set $H\subseteq \cZ^n$ is a \emph{$\frac{1}{g}$-integral generating set for a cone ($\frac{1}{g}$-IGSC)} if every integral vector in $\cone(H)$ can be written as a $\frac{1}{g}$-integral conic combination of the vectors in $H$. 

\begin{theorem}\label{IDP-gIGSC}
Let $P\subseteq \cR^n$ be a polyhedron with the integer decomposition property, where $\aff(P) = \{x:Ax=b\}$ for some $A\in \cZ^{m\times n},b\in \cZ^m\setminus \0$ with $m\geq 1$. Let $g:=\gcd\{b_i:i\in [m]\}$. Then $P\cap \cZ^n$ is a $\frac{1}{g}$-IGSC. 
\end{theorem}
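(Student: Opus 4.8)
The plan is to reduce the assertion to a single application of the integer decomposition property. Recall that $P\cap\cZ^n$ being a $\frac1g$-IGSC means precisely: for every integral $z\in\cone(P\cap\cZ^n)$, the scaled vector $gz$ is an \emph{integral} conic combination of points of $P\cap\cZ^n$. So I would fix an integral $z\in\cone(P\cap\cZ^n)$, write $z=\sum_{p}\mu_p p$ over finitely many $p\in P\cap\cZ^n$ with $\mu_p\geq 0$, and set $\mu:=\sum_p\mu_p$.

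The first step is the observation $Az=\sum_p\mu_p(Ap)=\mu b$. The key step — and the only place the GCD hypothesis is used — is to deduce from this that $k:=\mu g\in\cZ_{\geq 0}$. Indeed, since $z\in\cZ^n$ and $A\in\cZ^{m\times n}$, the vector $\mu b=Az$ is integral; writing $b=gb'$ with $b'\in\cZ^m$ and $\gcd\{b'_i:i\in[m]\}=1$, Bézout supplies integers $c_1,\dots,c_m$ with $\sum_i c_ib'_i=1$, so $\mu g=\sum_i c_i(\mu g\,b'_i)=\sum_i c_i(Az)_i\in\cZ$, and $\mu g\geq 0$ is clear.

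Next I would check that $gz\in kP$: when $\mu>0$ the point $z/\mu$ is a convex combination of points of $P$, hence lies in $P$, and $gz=k\cdot(z/\mu)\in kP$; the degenerate case $\mu=0$ forces $z=\0=gz\in 0P$. Since $gz$ is integral, the integer decomposition property of $P$ yields integral points $q_1,\dots,q_k\in P\cap\cZ^n$ with $gz=q_1+\cdots+q_k$. Collecting repeated terms gives $gz=\sum_p c_p p$ with $c_p\in\cZ_{\geq 0}$, and therefore $z=\sum_p (c_p/g)\,p$ is a $\frac1g$-integral conic combination of points of $P\cap\cZ^n$, as required. The trivial edge cases ($P\cap\cZ^n=\emptyset$, or $z=\0$, corresponding to $k=0$) are covered by the convention that the empty conic combination equals $\0$.

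I expect the only nontrivial point to be the integrality of $\mu g$ in the second step, which is really just a Bézout argument powered by the normalization $\gcd\{b_i\}=g$; the rest is a direct unwinding of the definitions of $\cone$, of $kP$, and of the integer decomposition property. Note in particular that no pointedness of $P$ is needed here — that hypothesis enters only later, when \Cref{IGSC-integral-basis} is invoked in the proof of \Cref{IDP-integral-basis}.
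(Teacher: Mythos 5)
Your proposal is correct and follows essentially the same route as the paper's proof: multiply through by the constraint matrix to learn that $\mu g$ (your $k$, the paper's $gk$) is a nonnegative integer, observe $gz\in kP\cap\cZ^n$, and invoke the integer decomposition property once. The only cosmetic difference is that you spell out the B\'ezout step behind ``$k b$ integral and $\gcd\{b_i\}=g$ imply $kg\in\cZ$'', which the paper leaves implicit.
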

\begin{proof}
Let $\bar{x}$ be an integral vector in $\cone(P\cap \cZ^n)$, so $\bar{x} = \sum_{p\in P\cap \cZ^n} \lambda_p p$ for some assignment $\lambda_p\in \cR_{\geq 0}$ to every $p\in P\cap \cZ^n$, where only finitely many $\lambda_p$'s are nonzero. Let $k:=\1^\top \lambda\geq 0$. Note that $\bar{x}\in kP\cap \cZ^n$. We claim that $gk\in \cZ$. To see this, note that $$A\bar{x} = \sum_{p\in P\cap \cZ^n} \lambda_p A p = \sum_{p\in P\cap \cZ^n} \lambda_p b= k b.$$ Given that both $A,\bar{x}$ are integral, it follows that $A\bar{x} = k b$ is also integral, so $k\in \frac{1}{g}\cZ$.

If $k=0$, then $\bar{x}=0$. Otherwise, $gk\geq 1$. As $g\bar{x}\in gkP\cap \cZ^n$ and $P$ has the integer decomposition property, it follows that $g\bar{x}$ can be written as the sum of $gk$ points in $P\cap \cZ^n$. In both cases, we expressed $\bar{x}$ as a $\frac{1}{g}$-integral conic combination of the vectors in $P\cap \cZ^n$, thus finishing the proof.
\end{proof}

We are now ready to prove \Cref{IDP-integral-basis}.

\begin{proof}[Proof of \Cref{IDP-integral-basis}]
Let $P\subseteq \cR^n$ be a pointed polyhedron with the integer decomposition property, where $\aff(P) = \{x:Ax=b\}$ for $A\in \cZ^{m\times n},b\in \cZ^m$ such that $m \geq 1$, $b\neq \0$, and $\gcd\{b_i:i\in [m]\}=1$. It follows from \Cref{IDP-gIGSC} for $g=1$ that $P\cap \cZ^n$ is an IGSC. As $P$ is pointed, so is $\cone(P\cap \cZ^n)$, so by \Cref{IGSC-integral-basis}, $P\cap \cZ^n$ contains an integral basis for $\lin(P\cap \cZ^n)=\lin(P)$, where this equality follows from the integrality of $P$, as required.
\end{proof}

\section{$M$-convex sets and the Jump-Free Lemma}\label{sec:jump-free-lemma}

Let $(D=(V,A),\zF)$ be a digraft. In this section, we will see an affine function which maps $F(D,\zF)\subseteq \cR^A$ to a base polyhedron in $\cR^V$, and $F(D,\zF)\cap \{0,1\}^A$ to an `$M$-convex set'.\footnote{The terminology in this section relating to discrete convex analysis follows \cite{Murota03}.} The theory of perfect $b$-matchings in bipartite graphs allows us to construct a (not necessarily unique) inverse to this function. This inverse map, together with the `Exchange Axiom' for $M$-convex sets leads to a proof of the Jump-Free Lemma. We will also state and prove a result, more specifically \Cref{arc-active}, which will be needed in the next section. \b{Moving forward, we shall need the following key concept.}

\b{\begin{DE}[crossing family, crossing pair]
Let $\zU$ of subsets of a finite ground set $V$. Then $\zU$ is a \emph{crossing family} if $U\cap W,U\cup W\in \zU$ for all pairs $U,W\in \zU$ that \emph{cross}, i.e., $U\cap W\neq \emptyset$, $U\cup W\neq V$, $U\setminus W,W\setminus V\neq \emptyset$.
\end{DE}}

For a node subset $U\subseteq V$, denote by $\sources(U)$ and $\sinks(U)$ the sets of sources and sinks in $U$, respectively, and define the \emph{discrepancy of $U$} as $\disc(U):=|\sinks(U)|-|\sources(U)|$~\cite{Abdi23-dijoins}. 
Let $\zU:=\{U\subset V:U\neq \emptyset, \delta^-(U)=\emptyset\}$, which is a {crossing family over ground set $V$}. The function $\disc:\zU\to \cZ$ forms a \emph{crossing supermodular function}, meaning that $\disc(U\cap W)+\disc(U\cup W)\geq \disc(U)+\disc(W)$ for all pairs $U,W\in \zU$ that cross. (In fact, equality holds here, but all we need is the inequality.)

Consider the polytope 
$$P(D):=\left\{z\in \cR^{V}:
z(U)\geq 1+\disc(U),\,\forall U\in \zU;\,
z(V) =\disc(V)\right\}.$$ 
Above, we have an inequality for every set $U$ in the crossing family $\zU$, and an equality constraint for the ground set $V$. Given that the right-hand side values of the inequalities $U\mapsto 1+\disc(U)$ form a crossing supermodular function over the crossing family $\zU$, and $P(D)\neq \emptyset$ {by \Cref{degree-vector-LE} below}, we thus obtain that $P(D)$ is an integral \emph{base polyhedron} (a.k.a.\ a polymatroid), by a result of Fujishige~\cite{Fujishige84}. {
For every $z\in P(D)$, source $u$ and sink $v$ of $D$, we have $z_u\geq 0\geq z_v$. This holds because $\{u\},V\setminus \{v\}\in \zU$, so $z_u\geq 1+\disc(\{u\})=0$, and $$z(V)-z_v=z(V\setminus \{v\}) \geq 1+\disc(V\setminus \{v\})=1+\disc(V)-\disc(\{v\}) = 1+ z(V) -1,$$ implying in turn that $0\geq z_v$.} 

Let $$P(D,\zF) := P(D)\cap \{z: z_v=0,\,\forall v\in \sinks(V);\, z(U) =1+\disc(U),\,\forall U\in \zF\}.$$ 
As a face of an integral base polyhedron, $P(D,\zF)$ is also an integral base polyhedron. Subsequently, $P(D,\zF)\cap \cZ^V$ is an \emph{$M$-convex set}, that is, it possesses the following property: \begin{quote}
	\emph{Exchange Axiom:} For $z,t\in P(D,\zF)\cap \cZ^V$ and $u\in \supp^+(z-t)$, there exists $v\in \supp^-(z-t)$ such that $z':=z-\1_u+\1_v\in P(D,\zF)\cap \cZ^V$. We say that $z'$ is obtained by the exchange pair $(u,v)$ for $(z,t)$.
\end{quote} Above, $\supp^+(z) = \{v:z_v>0\}$ and $\supp^-(z) = \{v:z_v<0\}$. For more on base polyhedra and $M$-convex sets, we refer the reader to Murota's excellent book on Discrete Convex Analysis (\cite{Murota03}, Chapter~4). \b{Recently, $M$-convex sets have been characterized by the so-called `Lorentzian property', see~\cite{Branden20} for more.}

{
\begin{LE}\label{degree-vector-LE}
Let $x\in \cR^A$ such that $x(\delta(v)) = 1$ for every sink $v$. For each $v\in V$, let $z_v:=x(\delta(v)) - 1$. Then $x\in F(D,\zF)$ if and only if $z\in P(D,\zF)$. Furthermore, $P(D)\neq \emptyset$.
\end{LE}
\begin{proof}
Clearly, $z_v=0$ for every sink $v$. Furthermore, for every $U\subseteq V$ such that $\delta^-(U)=\emptyset$, we have
\begin{align*}
z(U) 
&= z(\sources(U))\\ 
&= \sum_{u\in \sources(U)} x(\delta^+(u)) - |\sources(U)| \\
&= \sum_{u\in \sources(U)} x(\delta^+(u)) - |\sources(U)| - \sum_{v\in \sinks(U)} x(\delta^-(v)) + |\sinks(U)|\\
& = x(\delta^+(U))-x(\delta^-(U)+\disc(U)\\
& = x(\delta^+(U))+\disc(U).
\end{align*} A consequence is that $
z(V) = \disc(V)$. Another is that for all $U\in \zU$, $z(U)\geq 1+\disc(U)$ if and only if $x(\delta^+(U))\geq 1$, with the slacks in both inequalities being equal. Subsequently, $x\in F(D,\zF)$ if and only if $z\in P(D,\zF)$. In particular, given that $F(D,\zF)\neq \emptyset$, we conclude that $P(D,\zF)\neq \emptyset$, and so $P(D)\neq \emptyset$.
\end{proof}
}

We just showed how to map every (integral) point in $F(D,\zF)$ to an (integral) point in $P(D,\zF)$, \b{namely its `degree vector' reduced by the all-ones vector}. Below we show that it is possible to go in reverse; only the first part of the lemma is needed for the proof of the Jump-Free Lemma, while the second (stronger) part is needed later.

\begin{LE}\label{perfect-b-matching-LE}
	Let $(D=(V,A),\zF)$ be a digraft, and let $z\in P(D,\zF)\cap \cZ^V$. \begin{enumerate}
		\item There exists $J\subseteq A$ such that $|J\cap \delta(v)|-1=z_v$ for each $v\in V$. 
		\item For each $a\in A$, there exists $J\subseteq A$ such that $a\in J$ and $|J\cap \delta(v)|-1=z_v$ for each $v\in V$.
	\end{enumerate} 
\end{LE}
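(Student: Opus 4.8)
The plan is to prove Lemma \ref{perfect-b-matching-LE} by viewing the desired arc set $J$ as a perfect $b$-matching in a bipartite graph. Since $D$ is bipartite, split $V$ into sources $S$ and sinks $T$; every arc of $D$ goes from $S$ to $T$, so the underlying graph of $D$ is a bipartite graph $G$ on parts $S,T$. An arc set $J\subseteq A$ with $|J\cap\delta(v)|-1 = z_v$ for all $v$ is exactly a subgraph of $G$ in which vertex $v\in S$ has degree $1+z_v$ (a nonnegative integer, since $z_v\ge 0$ on sources for $z\in P(D)$), and vertex $v\in T$ has degree $1+z_v = 1$ (since $z_v = 0$ on sinks by definition of $P(D,\zF)$). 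So part (1) asks for a subgraph of $G$ with prescribed degree sequence $b = \1 + z$, i.e.\ a perfect $b$-matching; part (2) asks additionally that this $b$-matching use a prescribed edge $a$.

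**Existence via the degree-constrained subgraph / flow criterion.** First I would establish part (1). The classical criterion (Ore / Gale--Ryser type, or simply max-flow--min-cut on the bipartite graph with a source feeding each $u\in S$ with capacity $b_u$ and each $v\in T$ feeding a sink with capacity $b_v = 1$) says a perfect $b$-matching exists iff $\sum_{u\in S} b_u = \sum_{v\in T} b_v$ and for every $X\subseteq S$, $\sum_{u\in X} b_u \le \sum_{v\in N(X)} b_v = |N(X)|$ — and symmetrically, but with $b\equiv 1$ on $T$ the binding direction is the one bounding the $S$-side demand by the $T$-side neighborhood. The total-degree balance $\sum_{u\in S}(1+z_u) = \sum_{v\in T}(1+z_v)$ is exactly $z(V) = |T|-|S| = \disc(V)$, which holds since $z\in P(D)$. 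For the cut condition, given $X\subseteq S$, consider the vertex set $U := X \cup (T\setminus N(X))$. One checks $\delta^-(U) = \emptyset$ in $D$: arcs into $U$ would have to land in $T\setminus N(X)$ coming from $S\setminus X$, impossible, or land in $X\subseteq S$, but sources have no in-arcs; and $U\ne\emptyset$. If also $U\ne V$, then $U\in\zU$, so $z(U)\ge 1+\disc(U)$, i.e.\ $\sum_{u\in X} z_u + \sum_{v\in T\setminus N(X)} 0 \ge 1 + (|T\setminus N(X)| - |X|)$, which rearranges to $\sum_{u\in X}(1+z_u) \le |N(X)| + |T| - |T| = |N(X)|$... let me be careful: it gives $\sum_{u\in X}(1+z_u) = |X| + z(X) \ge |X| + z(U) \ge |X| + 1 + |T| - |N(X)| - |X|$, hmm — I will need to massage signs, but the upshot is the neighborhood inequality, possibly off by the slack that makes it an inequality rather than equality, which is fine since we need $\le$. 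The degenerate cases $U = V$ (i.e.\ $X = S$, $N(X) = T$) and $U = \emptyset$ ($X=\emptyset$) are handled directly using $z(V) = \disc(V)$ and triviality. So part (1) follows. Since the right-hand sides $U\mapsto 1+\disc(U)$ form a crossing supermodular function and $P(D)$ is stated in the excerpt to be a nonempty integral base polyhedron, an alternative and cleaner route is to cite the standard equivalence between integral points of a base polyhedron of this flow type and feasible integral supermodular flows / degree-constrained subgraphs — whichever is cited earlier I would invoke rather than reprove.

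**Forcing a prescribed arc.** For part (2), fix $a = uv$ with $u\in S$, $v\in T$. The idea is to find an integral point $z'\in P(D,\zF)$ that "uses up" the demand at $v$ on the edge $a$ and then apply part (1) to the residual instance. Concretely, delete the arc $a$ and decrease the target degree at $u$ by $1$: set $b' := b - \1_u - \1_v$ on $G - a$. This residual $b'$-matching problem is again of the same form, and adding $a$ back recovers $J\ni a$. To verify feasibility of the residual instance I would re-run the cut criterion: for $X\subseteq S$ in $G-a$, if $u\notin X$ the inequality is unchanged (neighborhoods can only shrink by losing $v$ when $X = \{$sole neighbor$\}$... need the case $N_G(v) = \{u\}$), and if $u\in X$ both sides drop appropriately. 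The one subtlety is that $v$ might have $u$ as its only neighbor in $G$ — but then $a$ is a bridge, contradicting $2$-edge-connectedness of the underlying graph, so $v$ has another neighbor and the residual graph still lets $v$ be matched. I expect the main obstacle to be precisely this bookkeeping: making sure that removing $a$ and decrementing at $u$ does not violate the neighborhood inequality at some tight set, and correctly invoking $2$-edge-connectedness (equivalently, using that $P(D)$ is full-dimensional enough / that no single arc is forced to carry two units, which is where the $2$-edge-connectedness in the digraft definition earns its keep). A clean alternative for part (2), avoiding residual-graph surgery, is to start from any $J_0$ from part (1) and, if $a\notin J_0$, take the edge $a = uv$ together with an alternating structure: since $J_0$ is a perfect $b$-matching and $v$ has $b_v = 1$, the edge of $J_0$ at $v$ is some $u'v$ with $u'\ne u$; now look for an alternating walk from $u$ along non-$J_0$ then $J_0$ edges; because the underlying graph is $2$-edge-connected one can reroute along such a walk (an augmenting-path-style swap that preserves all degrees) to replace $u'v$ by $uv$. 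I would present whichever of the two is shorter once the degree-constrained-subgraph lemma being cited is pinned down, but the residual-instance argument is the more robust to write out in full.
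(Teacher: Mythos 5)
Your high-level plan matches the paper's---translate to a perfect $b$-matching for $b := \1+z$ and verify a cut-type feasibility criterion---but two details need repair. In part (1), the set $U := X \cup (T\setminus N(X))$ is generally \emph{not} a dicut: for $t \in T\setminus N(X)$, every neighbour of $t$ lies in $S\setminus X$, so arcs from $S\setminus X$ into such $t$ enter $U$ and $\delta^-(U)\neq\emptyset$. You want $U := (S\setminus X) \cup (T\setminus N(X))$ instead. Then $\delta^-(U) = \emptyset$, and $z(U)\ge 1+\disc(U)$ unwinds (using $z(S) = \disc(V) = |T|-|S|$ and $z|_T=\0$) to $|N(X)|\ge 1+b(X)$, which gives the Hall inequality $b(X)\le|N(X)|$ with room to spare. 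The paper avoids this back-and-forth entirely by invoking the perfect $b$-matching criterion directly in dicut form: $b(S)=b(T)$ and $b(U\cap S)-b(U\cap T)\ge 0$ for every dicut $\delta^+(U)$, which the hypothesis $z\in P(D,\zF)$ supplies verbatim.

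For part (2), your residual-instance plan (delete the arc $a$, set $b':=b-\1_u-\1_v$) leaves the genuinely hard step undone: deleting an arc \emph{creates} new dicuts---any $U$ with $\delta^-_D(U)=\{a\}$ becomes a dicut of $D-a$---and the cut inequality for such $U$ is not among your hypotheses and must be derived indirectly (e.g.\ via the dicut inequality for $U\setminus v$). You flag this as ``bookkeeping'' but do not carry it out, and that is where the proof would stall. The paper's cleaner move is to delete the whole head vertex $w$ of $a$ and set $b'_u := b_u-1$ on $D\setminus w$: since $w$ is a sink, every dicut of $D\setminus w$ is already a dicut of $D$, so the residual cut criterion follows immediately, with the slack of $1$ in $z(U)\ge 1+\disc(U)$ absorbing the decrement at $u$. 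Note also that $2$-edge-connectedness plays no role in part (2); only $z_u\ge 0$ for sources, automatic for $z\in P(D)$, is needed to keep $b'_u\ge 0$, so your appeal to it there is a red herring. Your alternating-path alternative is a genuinely different route, but it too is only sketched.
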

\begin{proof}
Let $b:=\1+z\in \cZ^V$. Note that $b_v=1$ for every sink $v$, and $b_u\geq 1$ for every source $u$.
	 
	 {\bf (1)} This part asks for an arc subset $J$ such that $|J\cap \delta(v)|=b_v$. A \emph{perfect $b$-matching} is a vector $x\in \cZ_{\geq 0}^A$ such that $x(\delta(v))=b_v$ for every node $v$. Given that $b_v=1$ for every sink $v$, and $x\geq \0$, it follows that every perfect $b$-matching is a $0,1$ vector. Thus, to prove this part, it suffices to argue the existence of a perfect $b$-matching.
	
To this end, denote by $S$ and $T$ the sets of sources and sinks of $D$, respectively. It is known that a perfect $b$-matching exists if, and only if, $b(S)=b(T)$, and for $b(U\cap S)-b(U\cap T)\geq 0$ for every dicut $\delta^+(U)$ (\cite{Schrijver03}, Corollary 21.1b, see \cite{Abdi23-dijoins}, Theorem 4.10).

Given that $b_v=1$ for every sink $v$, the equality $z(V)=\disc(V)$ is equivalent to $b(S) = b(T)$, while the inequality $z(U)\geq 1+\disc(U)$ is equivalent to $b(U\cap S)-b(U\cap T)\geq 1$ for every dicut $\delta^+(U)$. In particular, there exists a perfect $b$-matching, as required.
	
	{\bf (2)} Suppose $a$ has tail $u$ and head $w$. Let $D':=D\setminus w$, define $z'\in \cZ^{V\setminus w}$ as $z'_u:=z_u-1$ and $z'_v:=z_v$ for all $v\in V\setminus w\setminus u$, and let $b':=\1+z'\in \cZ^{V\setminus w}_{\geq 0}$. We claim that $D'$ has a perfect $b'$-matching. To this end, note first that $b'(S) = b(S)-1 =  b(T\setminus w)=b'(T\setminus w)$. Furthermore, let $\delta^+_{D'}(U)$ be a dicut of $D'$ for some $U\subset V\setminus w, U\neq \emptyset$. Observe that $\delta^+_D(U)$ is a dicut of $D$, so by the previous part, $b(U\cap S)-b(U\cap T)\geq 1$. Subsequently, $$b'(U\cap S)-b'(U\cap (T\setminus w))\geq (b(U\cap S)-1)-b'(U\cap (T\setminus w))  = b(U\cap S)-1-b(U\cap T)\geq 0.$$ As this inequality holds for every dicut $\delta^+_{D'}(U)$, $D'$ has a perfect $b'$-matching, as claimed. Adding arc $a$, we obtain a perfect $b$-matching $x$ in $D$ such that $x_a=1$, thereby proving this part.
	\end{proof}

We are now ready to prove the Jump-Free Lemma.

\begin{proof}[Proof of \Cref{jump-free-LE}]
	Let $(D=(V,A),\zF)$ be a digraft, let $\delta^+(U)$ be a dicut, and let $J_1,J_2\subseteq A$ be subsets such that $\1_{J_1},\1_{J_2}\in F(D,\zF)$ and $\lambda_1:=|J_1\cap \delta^+(U)|<|J_2\cap \delta^+(U)|=:\lambda_2$. Our goal is to prove that for any integer $\lambda\in(\lambda_1,\lambda_2)$, there exists a $J\subseteq A$ such that $\1_J\in F(D,\zF)$ and $|J\cap \delta^+(U)|=\lambda$.
	
	For each $i\in \{1,2\}$, and node $u$, let $z^i_u:=|J_i\cap \delta(u)|-1$. \b{That is, $z^1,z^2$ are the degree vectors of $
	\1_{J_1},\1_{J_2}\in F(D,\zF)$ reduced by the all-ones vector, respectively.} {Thus} $z^1,z^2\in P(D,\zF)$ \b{by \Cref{degree-vector-LE}}. By the Exchange Axiom for the $M$-convex set $P(D,\zF)\cap \cZ^V$, there exists a sequence of points $z^1=:t^1,t^2,\ldots,t^k:=z^2$ in $P(D,\zF)\cap \cZ^V$ such that $t^{i+1}$ is obtained by an exchange pair $(u^i,v^i)$ for $(t^i,t^k)$, for each $1\leq i\leq k-1$. Note that $k-1$ is precisely the sum of the nonnegative entries of $z^1-z^2$. Note further that for each $1\leq i\leq k-1$, $$
	t^i(U)-t^{i+1}(U) = \1_{u_i}(U)-\1_{v_i}(U)\in \{-1,0,1\},
	$$ thus the set $\{t^i(U):i=1,\ldots,k\}$ contains all the integers between $z^1(U)$ and $z^2(U)$. Subsequently, given that $$
	z^1(U)-\disc(U)=\lambda_1<\lambda_2=z^2(U)-\disc(U),
	$$ we have $$\{t^i(U)-\disc(U):i=1,\ldots,k\}\supseteq [\lambda_1,\lambda_2]\cap \cZ.$$ Now pick an integer $\lambda\in (\lambda_1,\lambda_2)$, and a $t^i$ such that $t^i(U)-\disc(U)=\lambda$. By \Cref{perfect-b-matching-LE}~(1), there exists a $J\subseteq A$ such that $|J\cap \delta(v)|-1=t^i_v$ for each $v\in V$, and so in particular, $\1_J\in F(D,\zF)$. \b{The inclusion follows from \Cref{degree-vector-LE}, as the degree vector of $\1_J$ reduced by the all-ones vector is precisely $t^i$, which belongs to $P(D,\zF)$.} This is the desired set $J$, because $|J\cap \delta^+(U)|=t^i(U)-\disc(U)=\lambda$.
\end{proof}

\Cref{perfect-b-matching-LE}~(2) has the following consequence, which will be useful in the next section.

\begin{theorem}\label{arc-active}
	Let $(D=(V,A),\zF)$ be a digraft. Then, for every $a\in A$, there exists $J\subseteq A$ such that $a\in J$ and $\1_J\in F(D,\zF)$. 
\end{theorem}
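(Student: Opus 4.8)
The statement to prove is \Cref{arc-active}: for every $a\in A$, there exists $J\subseteq A$ with $a\in J$ and $\1_J\in F(D,\zF)$. The plan is to reduce this directly to \Cref{perfect-b-matching-LE}~(2) by exhibiting a single integral point of $P(D,\zF)$ to feed into it. First I would invoke the (already-established) correspondence between integral points of $F(D,\zF)$ and integral points of $P(D,\zF)$: since the digraft axioms guarantee $F(D,\zF)\neq\emptyset$, and $F(D,\zF)$ is an integral polytope (it is a face of the integral polyhedron $\dij(D)$ contained in $[0,1]^A$), there is some $J_0\subseteq A$ with $\1_{J_0}\in F(D,\zF)\cap\{0,1\}^A$. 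Setting $z_v:=|J_0\cap\delta(v)|-1$ for each $v\in V$, the computation carried out just before \Cref{perfect-b-matching-LE} shows $z\in P(D,\zF)\cap\cZ^V$: indeed $z_v=0$ for every sink $v$ because $\1_{J_0}\in[0,1]^A$ and $J_0$ is a minimal dijoin with exactly one arc at each sink, and $z(U)=\1_{J_0}(\delta^+(U))+\disc(U)$ translates the constraints defining $F(D,\zF)$ into those defining $P(D,\zF)$.

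Next I would apply \Cref{perfect-b-matching-LE}~(2) to this $z\in P(D,\zF)\cap\cZ^V$ and the given arc $a$: it yields $J\subseteq A$ with $a\in J$ and $|J\cap\delta(v)|-1=z_v$ for every $v\in V$. It remains only to translate back: since $|J\cap\delta(v)|=z_v+1$, we have $|J\cap\delta(v)|=1$ for every sink $v$ (as $z_v=0$ there), so $x:=\1_J$ satisfies $x(\delta(v))=1$ for every sink $v$; and for every dicut $\delta^+(U)$, $x(\delta^+(U))=|J\cap\delta(v)|$ summed appropriately $=z(U)-\disc(U)$, which equals $1$ exactly when $U\in\zF$ and is $\geq 1$ for every dicut, by the defining (in)equalities of $P(D,\zF)$. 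Hence $\1_J\in F(D,\zF)$, and since $a\in J$ this is the desired set.

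I do not anticipate a genuine obstacle here: the theorem is essentially a restatement of \Cref{perfect-b-matching-LE}~(2) after passing through the affine bijection between $F(D,\zF)$ and $P(D,\zF)$ established in the preceding discussion. The only things to be careful about are (i) confirming that $F(D,\zF)$ indeed contains a $0/1$ point — this follows from the digraft axiom (d) that $F(D,\zF)\neq\emptyset$ together with the integrality of $\dij(D)$ and the containment $F(D,\zF)\subseteq[0,1]^A$ — and (ii) keeping the direction of the $z\leftrightarrow x$ correspondence straight, in particular that $z_v=0$ at sinks corresponds to the degree-one condition $x(\delta(v))=1$ built into the definition of $F(D,\zF)$. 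Both are routine bookkeeping already spelled out in the text just before \Cref{perfect-b-matching-LE}.
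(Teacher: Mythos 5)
Your proposal is correct and takes essentially the same route as the paper: pick an integral point $z\in P(D,\zF)\cap\cZ^V$ (which you obtain explicitly by mapping a $0/1$ point of $F(D,\zF)$, whereas the paper invokes the nonemptiness and integrality of $P(D,\zF)$ established in the preceding discussion), apply \Cref{perfect-b-matching-LE}~(2), and translate back via the affine correspondence. The extra bookkeeping you spell out is already implicit in the paper's one-line argument; there is no substantive difference.
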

\begin{proof}
Pick an arbitrary point $z\in P(D,\zF)\cap \cZ^V$, and let $a\in A$. Then by \Cref{perfect-b-matching-LE}~(2), there exists $J\subseteq A$ such that $a\in J$ and $|J\cap \delta(v)|-1=z_v$ for each $v\in V$. As $z\in P(D,\zF)$, it follows that $\1_J\in F(D,\zF)$, as required.
\end{proof}

\section{The slack, dicut contraction, and the Affine Critical Lemma}\label{sec:dimension-dijoins-faces}

In the introduction, we motivated digrafts as a means to describe faces of $\dij(D)$ for a bipartite digraph $D$ obtained by setting some dicut inequalities to equality. In this section, we first count the dimension of the face $F(D,\zF)$ for a digraft $(D,\zF)$, and in the process, define the notion of the `slack', a novel and characteristic quantity associated with a digraft. When counting the dimension, we must study the equations that define the affine hull of $F(D,\zF)$. It is not clear whether some non-negativity constraints used to define $\dij(D)$, are forced to equality in $F(D,\zF)$. The following immediate corollary of \Cref{arc-active} shows that this is fortunately not the case.

\begin{CO}\label{arc-active-CO}
$F(D,\zF)\not\subseteq \{x:x_a=0\}$ for any digraft $(D=(V,A),\zF)$ and any arc $a\in A$.\qed
\end{CO}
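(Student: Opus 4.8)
\textbf{Proof proposal for \Cref{arc-active-CO}.} The plan is to deduce this corollary directly from \Cref{arc-active}. Fix an arbitrary digraft $(D=(V,A),\zF)$ and an arbitrary arc $a\in A$. By \Cref{arc-active}, there exists a subset $J\subseteq A$ with $a\in J$ and $\1_J\in F(D,\zF)$. Since $a\in J$, the incidence vector $\1_J$ satisfies $(\1_J)_a=1\neq 0$, so $\1_J$ is a point of $F(D,\zF)$ that does not lie in the hyperplane $\{x:x_a=0\}$. Hence $F(D,\zF)\not\subseteq\{x:x_a=0\}$. As $a$ was arbitrary, this establishes the claim.

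There is essentially no obstacle here: the content has already been carried out, namely in \Cref{perfect-b-matching-LE}~(2) (the existence of a perfect $b$-matching through a prescribed arc, obtained by deleting the head of $a$ and verifying the perfect-$b$-matching feasibility conditions on the resulting bipartite digraph) and in the one-line passage from there to \Cref{arc-active}. The only thing \Cref{arc-active-CO} adds is the observation that a $0,1$ point of $F(D,\zF)$ with a coordinate equal to $1$ certifies that the corresponding non-negativity inequality is not forced to equality on $F(D,\zF)$, which is immediate. Accordingly, the proof is a two-line invocation of \Cref{arc-active}, and can simply be marked \qed inline, as the statement already does.
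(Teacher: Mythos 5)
Your proposal is correct and matches the paper exactly: the paper itself states \Cref{arc-active-CO} as an ``immediate corollary'' of \Cref{arc-active}, marking it with an inline \qed, and your two-line derivation (take $J$ from \Cref{arc-active}, observe $(\1_J)_a=1$) is precisely the intended argument.
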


Therefore, the affine hull of $F(D,\zF)$ can be described solely by setting some dicut inequalities to equality. Roughly speaking, the `slack' captures the contribution of the `non-trivial dicut' inequalities in defining the affine hull. 

We shall formalize the notion of the slack in \S\ref{subsec:slack}. In \S\ref{subsec:de-composition}, we show how to `decompose' a digraft along a `contractible' dicut. In \S\ref{subsec:FDI}, we study for a basic digraft the facet-defining inequalities of $F(D,\zF)$ corresponding to contractible dicuts, and prove crucially that decomposition along this dicut does not change the slack. We are then well-equipped to prove the Affine Critical Lemma in \S\ref{subsec:affine-critical-LE}.

\subsection{The slack}\label{subsec:slack}

Let $(D=(V,A),\zF)$ be a digraft, and let $V^t$ be the set of tight nodes. Let $F:=F(D,\zF)$. We wish to lower bound the rank $r$ of the equations that hold for $F$. Let $\aff(F)$ be the affine hull of $F$. By \Cref{arc-active-CO}, $\aff(F)$ can be described by two types of equations: (i) $x(\delta(v))=1\,\forall v\in V^t$, and (ii) $x(\delta^+(U))=1$ for some dicuts $\delta^+(U)$ where $|U|\neq 1,|V|-1$. To lower bound $r$, we need the following remark.

\begin{RE}\label{linear-independence-RE}
	Let $G=(V,E)$ be a connected graph, and let $v^\star\in V$. Then $\1_{\delta(u)},u\in V\setminus v^\star$ are linearly independent.
\end{RE}

It can be readily checked that \begin{equation}\label{eq:rank-ineq}
	r \geq |V^t|-1,
\end{equation} where we have used the fact that the equations $x(\delta(u))=1,\,\forall u\in V^t$ have rank at least $|V^t|-1$, by \Cref{linear-independence-RE}. Furthermore, as the underlying undirected graph of $D$ is bipartite, these equations have rank exactly $|V^t|-1$ if and only if $V=V^t$. 

\begin{DE}[$\kappa^t$]\label{def:kappa-t}
Denote by $\kappa^t(D,\zF)$ the indicator variable for the event $V=V^t$, set to $1$ if the event occurs, and $0$ otherwise.
\end{DE}

As discussed above, the inequality \eqref{eq:rank-ineq} can be strengthened as follows: \begin{equation}\label{eq:rank-ineq-strengthened}
	r \geq |V^t|-\kappa^t(D,\zF),
\end{equation} 
where we have subtracted a $1$ only if every node of $(D,\zF)$ is tight.

Denote by $\dim(F)$ the dimension of $\aff(F)$. Then $\dim(F) = |A| - r$, so we obtain from \eqref{eq:rank-ineq-strengthened} that \begin{equation}\label{eq:dim-ineq}
	|A|-|V^t|+\kappa^t(D,\zF)\geq \dim(F).
\end{equation}
There is a subtle difference between the dimensions of the affine and linear hulls of $F$. Note that the dimension of the linear hull of $F$ is $\dim(F)+1$, given that $\0\notin \aff(F)$.

A characteristic quantity associated with a digraft is the slack in \eqref{eq:dim-ineq}.

\begin{DE}[slack]\label{def:slack}
The \emph{slack} of $(D=(V,A),\zF)$ is $$
s(D,\zF):=|A|-|V^t|+\kappa^t(D,\zF)- \dim(F(D,\zF))\geq 0,
$$ where $V^t$ denotes the set of tight nodes of $(D,\zF)$.
\end{DE}

The following is a straightforward but important characterization of when there is no slack.

\begin{RE}\label{slack=0}
$s(D,\zF)=0$ if and only if $\aff(F(D,\zF))$ is described by $x(\delta(v))=1,\,v\in V^t$.
\end{RE}

Subsequently, if $s(D,\zF)\geq 1$, then the description of $\aff(F(D,\zF))$ also sets some `non-trivial' dicut inequalities to equality.

\begin{DE}[tight dicut]
	A dicut $\delta^+(U)$ is \emph{tight} for $(D=(V,A),\zF)$ if $F(D,\zF)\subseteq \{x:x(\delta^+(U))=1\}$. A tight dicut $\delta^+(U)$ is \emph{non-trivial} if $1<|U|<|V|-1$, otherwise it is \emph{trivial}.
\end{DE}

The following theorem is the main result of this subsection.

\begin{theorem}\label{slack>=1}
Let $(D=(V,A),\zF)$ be a digraft. Then $s(D,\zF)\geq 1$ if, and only if, there exists a non-trivial tight dicut $\delta^+(U)$ where both $U,V\setminus U$ contain active sources.
\end{theorem}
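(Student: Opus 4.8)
The plan is to prove both directions by carefully relating the rank $r$ of the affine hull equations to the combinatorial structure of tight dicuts. For the easy direction, suppose every non-trivial tight dicut $\delta^+(U)$ has $V\setminus U$ (or $U$) containing no active source, i.e., consists entirely of tight nodes; I would argue that any equation $x(\delta^+(U))=1$ for such a dicut is already implied by the trivial equations $x(\delta(v))=1,\,v\in V^t$. Indeed, if all nodes in $V\setminus U$ are tight, then summing $x(\delta(v))=1$ over $v\in \sinks(V\setminus U)$ and $x(\delta(v))=1$ over the sources in $V\setminus U$, together with the discrepancy bookkeeping, expresses $x(\delta^+(U))$ as a linear combination of trivial equations (using that $D$ is bipartite, so every arc in $\delta^+(U)$ has its head in $\sinks(V\setminus U)$ and tail in $\sources(U)$, and every arc inside $V\setminus U$ is counted twice). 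Hence by Remark \ref{slack=0}, $s(D,\zF)=0$, and the contrapositive gives one direction.

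For the converse, assume $s(D,\zF)\geq 1$. By Remark \ref{slack=0}, $\aff(F(D,\zF))$ is \emph{not} described by the trivial equations alone, so there is some tight dicut $\delta^+(U)$ with $x(\delta^+(U))=1$ not in the span of $\{x(\delta(v))=1:v\in V^t\}$; in particular $\delta^+(U)$ is non-trivial (a trivial tight dicut $\delta^+(v)$ or $\delta^+(V\setminus v)$ contributes an equation that coincides with $x(\delta(v))=1$ up to sign, which is already trivial since such $v$ is tight). Among all tight dicuts whose equation is not implied by the trivial ones, I would like to choose $\delta^+(U)$ so that \emph{both} $U$ and $V\setminus U$ contain an active source. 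The natural tool here is an uncrossing argument on tight dicuts: if $\delta^+(U),\delta^+(W)$ are both tight and cross, then since $\zU$ is a crossing family and the right-hand side function $1+\disc$ is crossing supermodular, $\delta^+(U\cap W)$ and $\delta^+(U\cup W)$ are both tight as well, and the equation for $U$ lies in the span of those for $U\cap W$ and $U\cup W$ (this is the standard submodular-uncrossing identity for cut functions). So the family of "non-trivially tight" sets can be uncrossed, and I would use this to reduce to a tight dicut $\delta^+(U)$ for which $U$ is minimal (or chosen extremally) subject to its equation being non-trivial.

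The crux is then to show such a minimally/extremally chosen non-trivial tight $\delta^+(U)$ has active sources on both sides. If all sources in $U$ were tight, then as in the easy direction the equation $x(\delta^+(U))=1$ would be a combination of trivial equations — contradiction; symmetrically for $V\setminus U$. The subtlety — and what I expect to be the main obstacle — is handling the interaction with $\zF$ and with \emph{other} tight dicuts strictly between the trivial ones and $\delta^+(U)$: a source $u\in U$ could be tight not because of a trivial equation but because it lies inside some smaller tight dicut, so "all sources in $U$ tight" does not by itself immediately give that $x(\delta^+(U))=1$ is a trivial combination. To get around this I would set up the argument so that $U$ is chosen with $|U|$ minimum among non-trivially tight sets; then any tight dicut properly inside $U$ must have a \emph{trivial} equation or an equation already in the span of trivial ones, which forces its "small side" to be tight nodes, and one can then peel these off and still conclude that $x(\delta^+(U))=1$ is trivially implied unless $U$ genuinely contains an active source. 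Carrying this bookkeeping through cleanly — likely via the base-polyhedron picture $P(D,\zF)$ and Lemma \ref{perfect-b-matching-LE}, producing for each candidate active source $u$ an integral point of $F(D,\zF)$ with $x(\delta(u))=2$ — is where the real work lies; Theorem \ref{arc-active} and Corollary \ref{arc-active-CO} ensure no arc or (via a similar construction) active node is frozen, which should supply the needed witnesses.
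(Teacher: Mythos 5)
There is a genuine gap: you never actually prove the $(\Leftarrow)$ direction. Your ``easy direction'' is the contrapositive of $s(D,\zF)\geq 1\Rightarrow$ there is a non-trivial tight dicut splitting the active sources, and then ``for the converse'' you again \emph{assume} $s(D,\zF)\geq 1$ and try to \emph{produce} such a dicut --- this is the same implication a second time, not its converse. The implication that remains unaddressed is the one the paper proves first: if some non-trivial tight dicut $\delta^+(U)$ has an active source on each side, then $s(D,\zF)\geq 1$. The paper handles it by writing $\1_{\delta^+(U)}=\sum_{u\in\sources(U)}\1_{\delta(u)}-\sum_{v\in\sinks(U)}\1_{\delta(v)}$, reducing after row operations to the linear independence of $\sum_{u\in\sources(U)\cap V^a}\1_{\delta(u)}$ from $\{\1_{\delta(v)}:v\in V^t\}$, and then applying the connectedness-based independence fact in \Cref{linear-independence-RE} with the active source on the other side of $U$ playing the role of $v^\star$. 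Nothing in your sketch supplies this argument, and without it the ``if and only if'' is not established.

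Separately, the ``subtlety'' you identify in the second paragraph is not real, so the uncrossing machinery, the minimal choice of $U$, and the appeal to \Cref{perfect-b-matching-LE} are all unnecessary. A node $u$ being tight means, by definition, that $x(\delta(u))=1$ holds on all of $F(D,\zF)$; it does not matter \emph{why} it came to be tight (whether it was forced by a singleton in $\zF$ or implied indirectly by some nested tight dicut). So the moment every source in $U$ is tight, the identity $\1_{\delta^+(U)}=\sum_{u\in\sources(U)}\1_{\delta(u)}-\sum_{v\in\sinks(U)}\1_{\delta(v)}$ already places $\1_{\delta^+(U)}$ in the span of $\{\1_{\delta(v)}:v\in V^t\}$ (recall every sink is tight), which contradicts the choice of $U$ as having a non-implied equation; symmetrically for $V\setminus U$. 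Your sentence ``If all sources in $U$ were tight, then as in the easy direction the equation $x(\delta^+(U))=1$ would be a combination of trivial equations --- contradiction'' is, in fact, the complete proof of this direction for an arbitrary such $U$; the worry that follows it talked you out of a correct, short argument and led you toward a construction that is not needed and is itself left incomplete.
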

\begin{proof}
Denote by $V^t,V^a$ the sets of tight and active nodes of $(D,\zF)$. 
	
$(\Leftarrow)$ Suppose $\delta^+(U)$ is a non-trivial tight dicut of the digraft such that both $U,V\setminus U$ contain active sources. By definition, the points in $F(D,\zF)$ satisfy the equation $x(\delta^+(U))=1$. We claim that this equation is not implied by the equations $x(\delta(v)) = 1,~\forall v\in V^t$, which were used to obtain \eqref{eq:rank-ineq-strengthened}, thus giving an improvement of $1$ to that inequality, which eventually implies $s(D,\zF)\geq 1$. 
	
	To prove the linear independence of $x(\delta^+(U))=1$ from the other equations, it suffices to prove that $\1_{\delta^+(U)}$ is linearly independent of the vectors $\1_{\delta(v)}, v\in V^t$. Note that $$
	 \1_{\delta^+(U)} = \sum_{u\in \sources(U)} \1_{\delta^+(u)} - \sum_{v\in \sinks(U)} \1_{\delta^-(v)}.$$ 
	After subtracting a linear combination of $\1_{\delta(v)}, v\in V^t$ from $
	 \1_{\delta^+(U)}$, it suffices to prove that the vector $\sum_{u\in \sources(U)\cap V^a} \1_{\delta(u)}$ is linearly independent of $\1_{\delta(v)}, v\in V^t$. Given that both $U,V\setminus U$ contain active sources, it follows that $\sources(U)\cap V^a$ is a nonempty proper subset of $V^a$. Let $v^\star\in \sources(V\setminus U)\cap V^a$. By \Cref{linear-independence-RE}, the vectors $\1_{\delta(v)},v\in V\setminus v^\star$ are linearly independent, implying in turn that $\sum_{u\in \sources(U)\cap V^a} \1_{\delta(u)}$ is linearly independent of $\1_{\delta(v)}, v\in V^t$.	 
	 
	$(\Rightarrow)$ Suppose $s(D,\zF)\geq 1$, that is, the inequality in \eqref{eq:rank-ineq-strengthened} is not tight. This implies that there exists a tight dicut $\delta^+(U)$ such that the equation $x(\delta^+(U))=1$ is linearly independent of the equations $x(\delta(v)) = 1,~\forall v\in V^t$. In particular, $\1_{\delta^+(U)}$ is linearly independent of the vectors $\1_{\delta(v)},v\in V^t$. Given that
	$$
	 \1_{\delta^+(U)} 
	 = \sum_{u\in \sources(U)} \1_{\delta^+(u)} - \sum_{v\in \sinks(U)} \1_{\delta^-(v)}
	 = \sum_{v\in \sinks(V\setminus U)} \1_{\delta^-(v)} - \sum_{u\in \sources(V\setminus U)} \1_{\delta^+(u)},
	 $$ it follows that both $U,V\setminus U$ must contain active sources, \b{for if not, then one of the equalities above would imply that $\1_{\delta^+(U)}$ is a linear combination of the vectors $\1_{\delta(v)},v\in V^t$, which is a contradiction.} Subsequently, $|U|\neq 1,|V|-1$, so $\delta^+(U)$ is a non-trivial tight dicut, thereby finishing the proof.
\end{proof}

Let us give an intuitive interpretation of the slack $s:=s(D,\zF)$. The affine hull of $F(D,\zF)$ is described by two types of constraints: $x(\delta(v))=1,~\forall v\in V^t$, and $x(\delta^+(U))=1$ for a non-trivial tight dicut $\delta^+(U)$. The slack $s$ computes the additional contribution of non-trivial tight dicuts ---in terms of rank increase--- in defining the affine hull. Furthermore, if $s\geq 1$, then there exists a cross-free family of $s$ non-trivial tight dicuts, which can be used to give a `decomposition' of the digraft into $s+1$ pieces partitioning the active sources of $(D,\zF)$ into $s+1$ nonempty parts, such that each piece of the form $(D',\zF')$ satisfies $s(D',\zF')=\kappa^t(D',\zF')= 0$. Though these ideas can be formalized, we refrain from doing so here as it is outside the scope of this paper.

\subsection{Dicut contractions}\label{subsec:de-composition}

Let $(D=(V,A),\zF)$ be a digraft. In this subsection, we show how to decompose $(D,\zF)$ along certain dicuts into two smaller digrafts. We need a few preliminaries.

\begin{DE}[closure]
The \emph{closure of $\zF$} for $(D,\zF)$ is the family of subsets $U\subset V,U\neq \emptyset$ such that $\delta^-(U)=\emptyset$ and $F(D,\zF)\subseteq \{x:x(\delta^+(U))=1\}$. 
\end{DE}

The following lemma will be useful in this subsection.

\begin{LE}\label{crossing-family}
Let $(D=(V,A),\zF)$ be a digraft, and let $\zC$ be the closure of $\zF$ for $(D,\zF)$. Then $\zF\subseteq \zC$, and $\zC$ is a crossing family.
\end{LE}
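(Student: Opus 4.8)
The plan is to prove the two assertions of \Cref{crossing-family} in turn. That $\zF\subseteq\zC$ is immediate from the definition of a digraft: for $U\in\zF$ we have $\emptyset\neq U\subsetneq V$, and by property (b) of a digraft $\delta^-(U)=\emptyset$, while by the definition of $F(D,\zF)$ every $x\in F(D,\zF)$ satisfies $x(\delta^+(U))=1$; hence $U$ lies in the closure. So the content is the second claim, that $\zC$ is a crossing family, i.e.\ that $U\cap W,U\cup W\in\zC$ whenever $U,W\in\zC$ cross (meaning $U\cap W\neq\emptyset$ and $U\cup W\neq V$).

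First I would record the two standard structural facts. Since $\zU=\{U\subsetneq V:U\neq\emptyset,\delta^-(U)=\emptyset\}$ is a crossing family (stated in \S\ref{sec:jump-free-lemma}), if $U,W\in\zC$ cross then $U\cap W,U\cup W\in\zU$, so in particular $\delta^-(U\cap W)=\delta^-(U\cup W)=\emptyset$ and both are nonempty proper subsets of $V$; thus $\delta^+(U\cap W)$ and $\delta^+(U\cup W)$ are genuine dicuts. It remains to show $F:=F(D,\zF)\subseteq\{x:x(\delta^+(U\cap W))=1\}$ and likewise for $U\cup W$. For this I would use the submodular inequality for the cut function together with nonnegativity. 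Concretely, for any $x\geq\0$ and crossing $U,W$ one has the pointwise/summed inequality
\[
x(\delta^+(U\cap W))+x(\delta^+(U\cup W))\;\leq\;x(\delta^+(U))+x(\delta^+(W)),
\]
which holds because an arc contributes to the left side only if it contributes to the right side (the standard submodularity of $\delta^+$, valid here since $x\geq\0$; when $\delta^-(U\cap W)=\delta^-(U\cup W)=\emptyset$ no correction terms appear). Now fix $x\in F$. Since $U,W\in\zC$, the right-hand side equals $1+1=2$. Since $U\cap W,U\cup W\in\zU$ give dicut inequalities valid on $\dij(D)\supseteq F$, we have $x(\delta^+(U\cap W))\geq1$ and $x(\delta^+(U\cup W))\geq1$. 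Combining, $2\leq x(\delta^+(U\cap W))+x(\delta^+(U\cup W))\leq 2$, forcing equality throughout, so $x(\delta^+(U\cap W))=x(\delta^+(U\cup W))=1$. As this holds for every $x\in F$, both $U\cap W$ and $U\cup W$ lie in $\zC$, proving $\zC$ is a crossing family.

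The main point to get right is the submodular cut inequality and the bookkeeping that no $\delta^-$ terms intervene: one should verify, arc by arc, that for crossing $U,W$ every arc counted on the left is counted on the right, using that $x$ is nonnegative so that omitting some arcs on the right can only decrease it. A clean way is to note $\delta^+(U\cap W)\cup\delta^+(U\cup W)\subseteq\delta^+(U)\cup\delta^+(W)$ as arc sets when $\delta^-(U\cap W)=\delta^-(U\cup W)=\emptyset$, with the two left-hand sets disjoint; then sum $x_a\geq 0$ over the appropriate arcs. I do not anticipate a serious obstacle here; the only care needed is to invoke the right ambient facts (crossing family $\zU$, the dicut inequalities defining $\dij(D)$, and $F\subseteq\dij(D)$) rather than re-deriving them.
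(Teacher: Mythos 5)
Your proof is correct and follows essentially the same route as the paper's: use that $\zU$ is a crossing family to see that $\delta^+(U\cap W)$ and $\delta^+(U\cup W)$ are genuine dicuts, then sandwich via $2 = x(\delta^+(U))+x(\delta^+(W)) \geq x(\delta^+(U\cap W))+x(\delta^+(U\cup W)) \geq 2$; the paper in fact observes that the middle step is an \emph{equality} (because $\delta^-(U)=\delta^-(W)=\emptyset$ kills the usual correction terms $x(U\setminus W \to W\setminus U)$ and $x(W\setminus U \to U\setminus W)$), but your $\geq$-direction together with $x\geq\0$ suffices. One small inaccuracy in your closing aside: $\delta^+(U\cap W)$ and $\delta^+(U\cup W)$ are \emph{not} disjoint in general --- any arc from $U\cap W$ to $V\setminus(U\cup W)$ lies in both --- so the ``disjoint union contained in a union'' shortcut does not quite work as stated; what one needs (and what your main argument correctly uses) is the coefficientwise bound $\1_{\delta^+(U\cap W)}+\1_{\delta^+(U\cup W)} \leq \1_{\delta^+(U)}+\1_{\delta^+(W)}$.
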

\begin{proof}
The inclusion $\zF\subseteq \zC$ is clear. Let $U,W\in \zC$ cross, and let $x\in F(D,\zF)$. As $\delta^+(U),\delta^+(W)$ are dicuts, then so are $\delta^+(U\cap W),\delta^+(U\cup W)$. Subsequently, $x(\delta^+(U\cap W)),x(\delta^+(U\cup W))\geq 1$, and so $$
2 = x(\delta^+(U))+x(\delta^+(W)) = x(\delta^+(U\cap W))+x(\delta^+(U\cup W))\geq 2.
$$ Equality must hold throughout, so $x(\delta^+(U\cap W))=x(\delta^+(U\cup W))= 1$. As this holds for all $x\in F(D,\zF)$, it follows that $U\cap W,U\cup W\in \zC$.
\end{proof}

Of interest are those `non-trivial' dicut inequalities that expose a nonempty face of $F(D,\zF)$.

\begin{DE}[contractible dicut]
	A dicut $\delta^+(U)$ is \emph{contractible} if $1<|U|<|V|-1$ and $F(D,\zF)\cap \{x:x(\delta^+(U))=1\}\neq \emptyset$. 
\end{DE}

The following operation justifies the choice of the terminology above.

\begin{DE}[$(U,V\setminus U)$-contractions]\label{def:dicut-contraction}
	Suppose $\delta^+(U)$ is a contractible dicut of $(D,\zF)$. 
	Let $\overline{\zF}$ be the closure of $\zF\cup \{U\}$ for the digraft $(D,\zF\cup \{U\})$.
	Let $U_1:=U$ and $U_2:=V\setminus U$. Let $D_i=(V_i,A_i)$ be the bipartite digraph obtained from $D$ after shrinking $U_i$ to a single node $u_i$; so $V_i=\{u_i\}\cup U_{3-i}$. Let $$
	\zF_i := \{W:W\cap U_i=\emptyset, W\in \overline{\zF}\}\cup \{(W\setminus U_i)\cup \{u_i\}:U_i\subseteq W, W\in \overline{\zF}\}.
	$$ We refer to $(D_i,\zF_i),i=1,2$ as the \emph{$(U,V\setminus U)$-contractions of $(D,\zF)$}. 
\end{DE}

Note that $u_1$ is a source in $D_1$ and $\{u_1\}\in \zF_1$, and $u_2$ is a sink in $D_2$ and $V_2\setminus u_2=U_1\in \zF_2$. \b{
See \Cref{fig:tight-dicut-decomposition} for an illustration of a $(U,V\setminus U)$-contraction of the digraft on the left for $U:=\{2,3,6,7\}$, for which $\delta^+(U)$ is a contractible dicut. For this instance, $\overline{\zF}$ consists of $\zF$ as well as $\{1\},U$. The two $(U,V\setminus U)$-contractions of this instance are shown on the right. The top digraft $(D_1,\zF_1)$ is balanced basic, where $\zF_1$ consists of $V_1\setminus v$ for all sinks $v$, as well as $\{1\},\{u_1\}$. The bottom digraft $(D_2,\zF_2)$ is skewed basic, where $\zF_2$ consists only of $V_2\setminus v$ for all sinks $v$.
}

\begin{figure}[ht]
	\centering
	\includegraphics[scale=0.4]{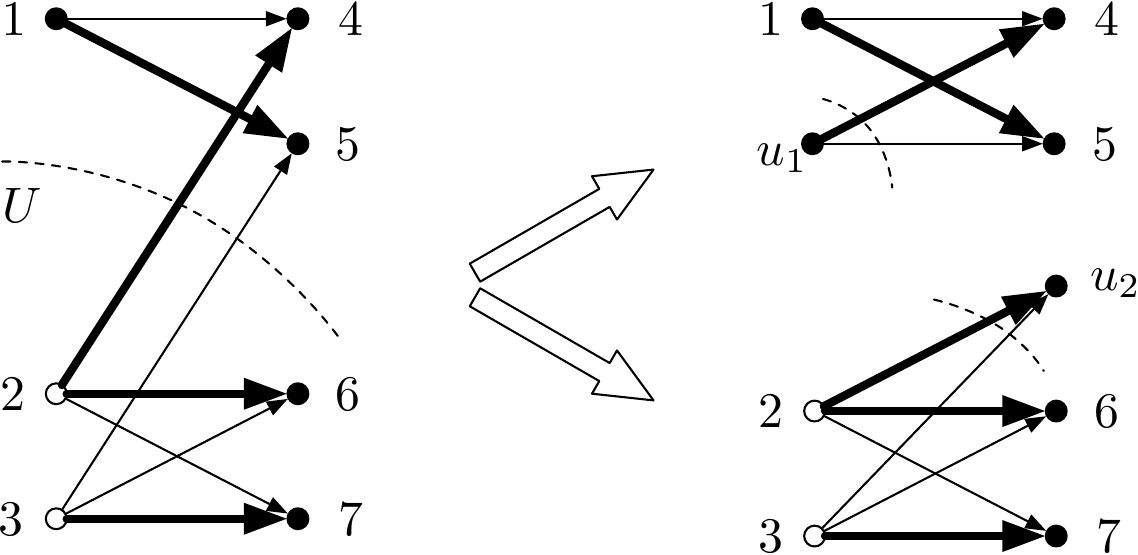}
\caption{\b{(Left) A digraft $(D,\zF)$ where $\zF$ consists of $V\setminus v$ for all sinks $v$. Denote by $J$ the set of bold arcs.
(Right: top, bottom) The two $(U,V\setminus U)$-contractions $(D_i,\zF_i),i=1,2$. Denote by $J_i$ the set of bold arcs of $(D_i,\zF_i)$ for $i=1,2$.
}
}
	\label{fig:tight-dicut-decomposition}
\end{figure}

We now explain how a $(U,V\setminus U)$-contraction decomposes the digraft $(D,\zF)$, as well as the $0,1$ points in $F(D,\zF)\cap \{x:x(\delta^+(U))=1\}$, into digrafts $(D_i,\zF_i)$, and $0,1$ points in $F(D_i,\zF_i)$, respectively. We also explain when and how two $0,1$ points in $F(D_i,\zF_i),i=1,2$ can be composed to give a $0,1$ point in $F(D,\zF)\cap \{x:x(\delta^+(U))=1\}$.

\begin{LE}\label{de-composition}
	Let $(D=(V,A),\zF)$ be a digraft, and suppose $\delta^+(U)$ is a contractible dicut. Let $(D_i=(V_i,A_i),\zF_i),i=1,2$ be the $(U,V\setminus U)$-contractions of $(D,\zF)$. Then the following statements hold: \begin{enumerate}
	        \item {\bf Decomposition:} For $i\in \{1,2\}$, $(D_i,\zF_i)$ is a digraft; furthermore, if $J\subseteq A$ satisfies $\1_J\in F(D,\zF)\cap \{x:x(\delta^+(U))=1\}$, then $J_i:=J\cap A_i$ satisfies $\1_{J_i}\in F(D_i,\zF_i)$.
		\item {\bf Composition:} If $J_i\subseteq A_i$ satisfies $\1_{J_i}\in F(D_i,\zF_i),i=1,2$, and $J_1\cap \delta^+(U)=J_2\cap \delta^+(U)$, then $J:=J_1\cup J_2$ satisfies $\1_J\in F(D,\zF)\cap \{x:x(\delta^+(U))=1\}$. 
	\end{enumerate}
	\b{See \Cref{fig:tight-dicut-decomposition} for an illustration of these operations.}
\end{LE}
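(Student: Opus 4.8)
The plan is to verify the two claims by translating everything back to dicuts and $b$-matchings, and using the closure family to keep track of which dicut equalities are forced. First I would record the combinatorial geometry of the contraction: the dicuts of $D_i$ are in bijection with the dicuts $\delta^+(W)$ of $D$ with $W\supseteq U_{3-i}$ (for $i$ so that $U_i$ is shrunk to a source $u_i$) or $W\cap U_i=\emptyset$ (when $u_i$ is a sink), together with the fact that $\delta^+_{D_i}(\cdot)$ restricted to arcs of $A_i$ equals $\delta^+_D(\cdot)$ on those arcs, and that $A = A_1\,\dot\cup\,A_2\,\dot\cup\,\delta^+(U)$ with $\delta^+(U)\subseteq A_i$ after shrinking (the arcs of $\delta^+(U)$ become arcs at $u_i$ in both $D_1$ and $D_2$). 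I would also note that, because $\delta^+(U)$ is contractible, $U\in\overline{\zF}$ by definition of the closure, and more generally $\overline{\zF}$ is a crossing family containing $\zF\cup\{U\}$ by \Cref{crossing-family}, so that every $W\in\overline{\zF}$ either lies inside one side or crosses $U$; crossing with $U$ forces $W\cap U,W\cup U\in\overline{\zF}$ as well, which is what lets the definition of $\zF_i$ split cleanly.

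\textbf{Decomposition.} To see $(D_i,\zF_i)$ is a digraft I would check the four axioms (a)--(d). Axioms (a),(b) are immediate from the fact that $\overline{\zF}$ consists of sets with no incoming arc and that $\emptyset,V\notin\overline{\zF}$ (using $\delta^-(U)=\emptyset$ so shrinking $U$ creates no new in-arcs at $u_i$). For (c), every sink $v$ of $D_i$ other than $u_i$ is a sink of $D$, so $V\setminus v\in\zF$; since $V\setminus v$ either avoids $U_i$ or contains $U_i$, its image lies in $\zF_i$ and equals $V_i\setminus v$; and when $u_i$ is the sink of $D_2$, $V_2\setminus u_2=U_1\in\overline{\zF}$ as noted. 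Axiom (d) — nonemptiness of $F(D_i,\zF_i)$ — is exactly the forward direction of the Decomposition statement: I would take $J$ with $\1_J\in F(D,\zF)\cap\{x:x(\delta^+(U))=1\}$ (nonempty since $\delta^+(U)$ is contractible), set $J_i:=J\cap A_i$, and verify $\1_{J_i}\in F(D_i,\zF_i)$. For this, translate to degrees: $|J_i\cap\delta_{D_i}(v)|=|J\cap\delta_D(v)|$ for $v\ne u_i$ (arcs of $\delta^+(U)$ count once, consistently), and at $u_i$ the degree of $J_i$ equals $|J\cap\delta^+(U)|=1$ when $u_i$ is a sink, resp.\ $\ge 1$ when it is a source; hence $\1_{J_i}$ satisfies the $\dij(D_i)$ inequalities. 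It remains to check $\1_{J_i}(\delta^+_{D_i}(W))=1$ for every $W\in\zF_i$; by the dicut correspondence this equals $\1_J(\delta^+_D(\widehat W))$ where $\widehat W\in\overline{\zF}$ is the preimage, and $\overline{\zF}$ being the closure of $\zF\cup\{U\}$ means precisely that $F(D,\zF)\cap\{x(\delta^+(U))=1\}\subseteq\{x:x(\delta^+(\widehat W))=1\}$, so this holds. This handles (d) and simultaneously the second half of the Decomposition claim.

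\textbf{Composition.} Given $J_i$ with $\1_{J_i}\in F(D_i,\zF_i)$ and $J_1\cap\delta^+(U)=J_2\cap\delta^+(U)$, set $J:=J_1\cup J_2$. The degree bookkeeping runs in reverse: at every $v\in V\setminus U_i$-type node the degree of $J$ in $D$ matches that of $J_i$ in $D_i$, and the common value $J_1\cap\delta^+(U)=J_2\cap\delta^+(U)$ glues consistently at the shrunk nodes; since $\{u_1\}\in\zF_1$ forces $|J_1\cap\delta^+(u_1)|=1$, i.e.\ $|J\cap\delta^+(U)|=1$, we get $\1_J(\delta^+(U))=1$ and also that the source/sink degrees of $J$ satisfy the $\dij(D)$ inequalities, so $\1_J\in\dij(D)$. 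Finally, for each $W\in\zF$ I need $\1_J(\delta^+_D(W))=1$: since $\zF\subseteq\overline{\zF}$, the set $W$ appears (possibly after intersecting/subtracting $U_i$) in $\zF_i$, and $\1_J(\delta^+_D(W))$ decomposes as a sum over the two sides of a combination of $\1_{J_i}(\delta^+_{D_i}(\cdot))$ values each equal to $1$, with the $\delta^+(U)$-part counted once by the gluing hypothesis; a short case analysis (whether $W\subseteq U_i$, $W\supseteq U_{3-i}$, or $W$ crosses $U$ — the last split via $\overline{\zF}$ being crossing-closed) gives the total $1$. Hence $\1_J\in F(D,\zF)\cap\{x:x(\delta^+(U))=1\}$.

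\textbf{Main obstacle.} The routine part is the degree/$\dij$ bookkeeping; the delicate part is the $\zF$-equality bookkeeping, specifically making sure that when $W\in\overline{\zF}$ crosses $U$ the definition of $\zF_i$ captures the right image and that no dicut equality is lost or spuriously introduced. This is exactly where \Cref{crossing-family} (applied to both $(D,\zF)$ and $(D,\zF\cup\{U\})$) does the work, and I expect the case analysis for $W$ crossing $U$ in the Composition direction — where one must reconstruct $\1_J(\delta^+(W))=1$ from $\1_{J_i}$-values on $W\cap U_i$ and $W\cup U_i$ — to be the step needing the most care.
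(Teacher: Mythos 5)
Your overall plan mirrors the paper's proof: translate both directions into dicut and degree bookkeeping, use that the closure $\overline{\zF}$ of $\zF\cup\{U\}$ is a crossing family (\Cref{crossing-family}) to keep the equality constraints straight, and run a case analysis on how $W$ sits relative to $U$. However, there is a gap in your Composition step. You assert that the degree constraints at sources and sinks already give $\1_J\in\dij(D)$, and then check $\1_J(\delta^+_D(W))=1$ only for $W\in\zF$; but $\dij(D)$ requires $x(\delta^+_D(W))\geq 1$ for \emph{every} dicut of $D$, not only singleton or cosingleton ones, and the degree constraints by themselves do not rule out $J$ missing a nontrivial dicut. The paper runs the same $W$-versus-$U$ case analysis for \emph{all} dicuts: if $W\subseteq U_1$ or $W\supseteq U_2$ (resp.\ $W\subseteq U_2$ or $W\supseteq U_1$), then $\delta^+_D(W)\subseteq A_2$ (resp.\ $\subseteq A_1$) is a dicut of $D_2$ (resp.\ $D_1$) and $\1_J(\delta^+_D(W))=\1_{J_2}(\delta^+_{D_2}(W))\geq 1$; otherwise $W$ and $U$ cross, and since $\delta^-_D(U)=\delta^-_D(W)=\emptyset$,
\[
\1_J(\delta^+_D(W))=\1_{J_1}(\delta^+_D(U\cup W))+\1_{J_2}(\delta^+_D(U\cap W))-\1_J(\delta^+_D(U))\geq 1+1-1=1,
\]
with equality forced when $W\in\zF$ via $U\cup W,\,U\cap W\in\overline{\zF}$, exactly as you propose. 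So the fix is to apply your case analysis to the inequality constraints too, not just the $\zF$-equalities. A separate, smaller point: your opening dicut bijection is misstated --- in each $D_i$ the dicuts correspond to the dicuts $\delta^+_D(W)$ of $D$ with $W\supseteq U_i$ \emph{or} $W\cap U_i=\emptyset$, and both kinds occur in each of $D_1,D_2$ regardless of whether $u_i$ is a source or a sink; a $W$ with $W\supseteq U_{3-i}$ gives a dicut of $D_{3-i}$, not of $D_i$. This does not derail your argument, but it should be corrected when writing out the details.
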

\begin{proof}
	{\bf (1)} It can be readily checked that $(D_i,\zF_i)$ is a digraft, for each $i\in \{1,2\}$. Suppose $J\subseteq A$ satisfies $\1_J\in F(D,\zF)\cap \{x:x(\delta^+(U))=1\}$. Observe that $F(D,\zF)\cap \{x:x(\delta^+(U))=1\} = F(D,\zF\cup \{U\})$. Let $\overline{\zF}$ be the closure of $\zF\cup \{U\}$ for $(D,\zF\cup \{U\})$. Then, by definition, $\1_J\in F(D,\overline{\zF})$. It can be readily checked now that $J_i:=J\cap A_i$ satisfies $\1_{J_i}\in F(D_i,\zF_i)$, for each $i\in \{1,2\}$. 
	
	{\bf (2)} We follow the notation in \Cref{def:dicut-contraction}. Suppose $J_1\cap \delta_{D}^+(U)=J_2\cap \delta_D^+(U)=\{a\}$. Then $J\cap \delta_D^+(U)=\{a\}$. Let $x^i:= \1_{J_i}\in F(D_i,\zF_i),i=1,2$, and let $x:=\1_{J}\in \cR^A$. Clearly, $x(\delta^+(U))=1$. 
	Let $C:=\delta^+_D(W)$ be a dicut of $D$. We need to show that $x(C)\geq 1$, and equality holds if $W\in \zF$.
	
	If $W\subseteq U_1$ or $W\supseteq U_2$, then $C$ is also a dicut of $D_2$, and $J\cap C = J_2\cap C\neq \emptyset$, so $x(C)=x^2(C)\geq 1$. Furthermore, if $W\in \zF$ and $W\subseteq U_1$, then $W\in \zF_2$ so $x(C)=x^2(C)=1$, and if $W\in \zF$ and $W\supseteq U_2$, then $(W\setminus U_2)\cup \{u_2\}\in \zF_2$ so $x(C)=x^2(C)=1$. 
	
	Similarly, if $W\subseteq U_2$ or $W\supseteq U_1$, then $C$ is also a dicut of $D_1$, $x(C)=x^1(C)\geq 1$, and if $W\in \zF$, then $x(C)=1$.
	
	Otherwise, $W$ and $U$ cross. Subsequently, 
	$C_1:=\delta_D^+(U\cup W)$ is a dicut of both $D$ and $D_1$, 
	$C_2:=\delta_D^+(U\cap W)$ is a dicut of both $D$ and $D_2$, 
	and therefore the inequality below holds: $$
	x(C) = x(C_1)+x(C_2) - x(\delta^+_D(U)) =
	x^1(C_1)+x^2(C_2)-1\geq 1+1-1=1.
	$$ 
	Suppose $W\in \zF$. It remains to show that $x(C)=1$. Recall that $\overline{\zF}$ is the closure of $\zF\cup \{U\}$ for $(D,\zF\cup \{U\})$. By \Cref{crossing-family}, $\overline{\zF}$ is a crossing family. Thus, given that $W,U\in \zF\cup \{U\}\subseteq \overline{\zF}$ and $U,W$ cross, it follows that $U\cup W,U\cap W\in \overline{\zF}$. Subsequently, by \Cref{def:dicut-contraction}, $((U\cup W)\setminus U_1)\cup \{u_1\}\in \zF_1$ and $U\cap W\in \zF_2$, so $x^1(C_1)=x^2(C_2)=1$, implying in turn that $x(C)=1$, as required.
\end{proof}

We only use \Cref{de-composition}~(decomposition) in this section, as composition will not be needed until the next section.

\subsection{Nontrivial facet-defining inequalities}\label{subsec:FDI}

As we saw in \Cref{de-composition}, a contractible dicut (if any) can be used to decompose the digraft into two smaller ones, and under certain conditions we can also compose two solutions to go back. As every non-trivial tight dicut is clearly contractible, we shall repeatedly contract them to decompose our digraft into basic pieces.
	
	Observe that if $(D,\zF)$ is basic, then every tight dicut is trivial, so $s(D,\zF)=0$ by \Cref{slack=0}.
	
	Given a basic digraft, a key idea that allows for the proof of \Cref{main-digraft} is to contract more dicuts, namely those that correspond to `non-trivial' facet-defining inequalities. What enables these contractions is the following crucial theorem, proving that both pieces of the contraction also have zero slack.

\begin{theorem}\label{slack-cut-contraction}
Let $(D=(V,A),\zF)$ be a basic digraft. Suppose $x(\delta^+(U))\geq 1$ is a facet-defining dicut inequality for $F(D,\zF)$ that is not equivalent to $x(\delta^+(u))\geq 1$ for any active source $u$. Let $U_1:=U$ and $U_2:=V\setminus U$. Let $(D_i=(V_i,A_i),\zF_i),i=1,2$ be the $(U,V\setminus U)$-contractions of $(D,\zF)$. Then the following statements hold for each $i\in \{1,2\}$: \begin{enumerate}
\item $U_i$ contains an active source of $(D,\zF)$,
\item every active source for $(D,\zF)$ in $U_{3-i}$ is an active source for $(D_i,\zF_i)$, and vice versa,
\item $s(D_i,\zF_i) = 0$. 
\end{enumerate}
\end{theorem}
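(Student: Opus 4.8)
The plan is to prove the three statements in a natural order: first (1) and (2), which are largely structural bookkeeping about which nodes remain active under the dicut contraction, and then (3), which is the real content and the main obstacle. For statement (1), I would argue by contradiction: if, say, $U_1=U$ contained no active source, then every source in $U$ is tight, hence $\1_{\delta^+(u)}$ is in the span of $\{\1_{\delta(v)}:v\in V^t\}$ for each source $u\in U$; expanding $\1_{\delta^+(U)} = \sum_{u\in\sources(U)}\1_{\delta^+(u)} - \sum_{v\in\sinks(U)}\1_{\delta^-(v)}$ (all sinks are tight) would show $x(\delta^+(U))\ge 1$ is implied by the trivial facet-defining inequalities $x(\delta^+(u))\ge 1$ over sources $u$ and the equations $x(\delta(v))=1$ over tight $v$, contradicting that it is a genuine non-trivial facet not equivalent to any $x(\delta^+(u))\ge 1$ — one has to be a little careful here, using that $(D,\zF)$ is basic so that any tight dicut is trivial, to conclude the facet has $1<|U|<|V|-1$ and to rule out degenerate possibilities. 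Statement (2) should then follow by tracking the correspondence in Definition~\ref{def:dicut-contraction} between points of $F(D,\zF)\cap\{x(\delta^+(U))=1\} = F(D,\overline{\zF})$ and the pair $F(D_1,\zF_1)\times F(D_2,\zF_2)$ (via Lemma~\ref{de-composition}), observing that the contracted node $u_i$ is always tight in $(D_i,\zF_i)$ while a source $w\in U_{3-i}$ retains its incident arcs and its degree-profile across the correspondence, so $w$ is tight for $(D_i,\zF_i)$ iff it is tight for $(D,\zF)$.

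For statement (3), the strategy is a dimension count. By Remark~\ref{slack=0}, $s(D_i,\zF_i)=0$ is equivalent to saying $\aff(F(D_i,\zF_i))$ is cut out purely by the star equations $x(\delta(v))=1$ over tight nodes $v$ of $(D_i,\zF_i)$. Equivalently, via the slack definition, I want $\dim F(D_i,\zF_i) = |A_i| - |V_i^t| + \kappa^t(D_i,\zF_i)$. The idea is to relate $\dim F(D_1,\zF_1) + \dim F(D_2,\zF_2)$ to $\dim(F(D,\zF)\cap\{x(\delta^+(U))=1\})$ through the decomposition/composition Lemma~\ref{de-composition}: the composed face is, up to the one affine equation $x(\delta^+(U))=1$ (which is a genuine facet, so it drops dimension by exactly $1$), a "fibered product" of the two contraction faces over the single shared coordinate $x(\delta^+(U))\in\{0,1\}$. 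Since $x(\delta^+(U))=1$ is forced on the composed face but on $F(D_i,\zF_i)$ the node $u_i$ is tight so $x(\delta_{D_i}(u_i))=1$ already pins down $x(\delta^+(U))$ there (as $\delta^+(U)=\delta(u_i)$ in $D_i$ after contraction, roughly), the gluing is "free" and one gets a clean additivity: $\dim(F(D,\zF)\cap\{x(\delta^+(U))=1\}) = \dim F(D_1,\zF_1) + \dim F(D_2,\zF_2) - (\text{overlap in coordinates on }\delta^+(U))$. Meanwhile, on the left side, since $x(\delta^+(U))\ge1$ is a facet of $F(D,\zF)$, we have $\dim(F(D,\zF)\cap\{x(\delta^+(U))=1\}) = \dim F(D,\zF) - 1$, and by the Affine Critical-type analysis (or directly, since $(D,\zF)$ basic gives $s(D,\zF)=0$) $\dim F(D,\zF) = |A|-|V^t|$. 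Combining these with the arithmetic $|A| = |A_1|+|A_2|-|\delta^+(U)|$ and a careful count of how $|V^t|$ splits into $|V_1^t|+|V_2^t|$ with corrections for the new tight nodes $u_1,u_2$ and the $\kappa^t$ indicators, the slack of each piece is squeezed to $0$ (using $s(D_i,\zF_i)\ge 0$ always, so it suffices to show the sum is $\le 0$).

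The main obstacle is statement (3), specifically making the "free gluing" dimension count rigorous: one must show precisely that imposing $x(\delta^+(U))=1$ on the composed object costs exactly one dimension and no more, which requires knowing both contraction faces genuinely vary in the $\delta^+(U)$-coordinates in a compatible way — this is where Lemma~\ref{de-composition}(composition) together with the fact that $\delta^+(U)$ is a facet (not a lower-dimensional face) of $F(D,\zF)$ is essential, and where I expect to need the Jump-Free Lemma~\ref{jump-free-LE} or at least Lemma~\ref{perfect-b-matching-LE} to certify that enough integral points with $x(\delta^+(U))=1$ exist on both sides. A secondary subtlety is correctly handling the $\kappa^t$ bookkeeping: it is conceivable that $V_i = V_i^t$ for one of the pieces even though $V\ne V^t$, and the $\kappa^t$ terms must be tracked exactly or the slack bound will be off by one; statements (1) and (2) are what guarantee each $U_i$ has an active source, which is precisely what keeps $\kappa^t(D_i,\zF_i)$ under control and should neutralize this difficulty.
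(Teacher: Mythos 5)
Your sketch for part (1) is essentially the paper's: if one side of $U$ has no active source, then $x(\delta^+(U))$ is constant on $F(D,\zF)$, contradicting that the inequality defines a proper (facet) face.

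Your sketch for part (2) has a gap. You claim a source $w\in U_{3-i}$ ``retains its degree-profile across the correspondence, so $w$ is tight for $(D_i,\zF_i)$ iff it is tight for $(D,\zF)$.'' The ``if'' direction is trivial, but the ``only if'' direction is false in general: $F(D_i,\zF_i)$ corresponds (via \Cref{de-composition}) to the \emph{facet} $F(D,\zF)\cap\{x:x(\delta^+(U))=1\}$, and a proper face of a polyhedron can acquire \emph{new} tight nodes. The paper's proof of this direction crucially uses the hypothesis that the facet $x(\delta^+(U))\geq 1$ is \emph{not equivalent} to $x(\delta^+(u))\geq 1$ for any active source $u$: if $w$ were active for $(D,\zF)$ but tight on the facet, then $x(\delta(w))\geq 1$ would be a facet-defining inequality equivalent to $x(\delta^+(U))\geq 1$, contradicting that hypothesis. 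Your argument never invokes this hypothesis, so it cannot be correct as written.

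Part (3) is where your proposal genuinely diverges from the paper and where the serious gap lies. You propose a direct dimension count: treat $F(D,\zF)\cap\{x:x(\delta^+(U))=1\}$ as a fibered product of $F(D_1,\zF_1)$ and $F(D_2,\zF_2)$ over the $\delta^+(U)$-coordinates and read off additivity of dimensions. To make that rigorous you would need to know the rank of the projections of $\aff(F(D_i,\zF_i))$ onto $\cR^{\delta^+(U)}$ and that the gluing is transverse; controlling those ranks is essentially equivalent to the slack statement you are trying to prove, so the argument risks circularity, and you flag exactly this as the obstacle. The paper instead proves (3) by \emph{contradiction}: if $s(D_i,\zF_i)\geq 1$, then by \Cref{slack>=1} there is a non-trivial tight dicut $\delta_{D_i}(W)$ of $(D_i,\zF_i)$ separating two active sources; lifting $W$ back to $D$, one shows $F(D,\zF)\cap\{x:x(\delta_D(U_i))=1\}\subseteq\{x:x(\delta_D(W))=1\}$, so $\delta_D(W)$ is either a tight dicut of $(D,\zF)$ (impossible since $(D,\zF)$ is basic and $W$ is non-trivial) or defines the same facet as $\delta_D(U_i)$; the latter is then ruled out by a linear-dependence argument on star vectors using \Cref{linear-independence-RE} and the three pairwise-disjoint active sources in $W$, $U_i$, and $V\setminus(W\cup U_i)$. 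That contradiction argument sidesteps the rank-of-projections question entirely. The dimension identity $d = d_1 + d_2 - |\delta^+(U)| + 1$ that you are trying to establish directly is indeed true, but in the paper it is \emph{derived from} part (3) (in the proof of \Cref{main-digraft}), not used to prove it. Neither the Jump-Free Lemma nor \Cref{perfect-b-matching-LE}, which you anticipate needing, plays a role in the paper's proof of (3).
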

\begin{proof}
Let $U_1:=U$, $U_2:=V\setminus U$, and write $V_i=\{u_i\}\cup U_{3-i}$ for $i=1,2$. 

\begin{claim} 
	For each $i\in \{1,2\}$, $U_i$ contains an active source of $(D,\zF)$, say $z_i$. Thus, part {\bf (1)} holds.
\end{claim}
\begin{cproof}
	Let $x\in F(D,\zF)$. Then we have \begin{align*}
	x(\delta^+(U)) 
	&= \sum_{u\in \sources(U_1)} x(\delta^+(u)) -  \sum_{u\in \sinks(U_1)} x(\delta^-(u)) 
	\\
	& =  \sum_{v\in \sinks(U_2)} x(\delta^-(v)) -  \sum_{v\in \sources(U_2)} x(\delta^+(v)). 
	\end{align*} 
	Suppose for a contradiction that one of $U_1,U_2$ does not contain an active source.
	If $U_1$ consists solely of tight nodes, then $x(\delta^+(U)) = -\disc(U_1)$, and otherwise, $U_2$ consists solely of {tight} nodes, so $x(\delta^+(U)) = \disc(U_2)$. In both cases, we obtain that $x(\delta^+(U))$ has a fixed value for all $x\in F(D,\zF)$, which is a contradiction as $x(\delta^+(U))= 1$ determines a facet, hence proper face of $F(D,\zF)$.~\end{cproof}

Let $V^t,V^a$ be the sets of tight and active nodes of $(D,\zF)$, respectively.

\begin{claim} 
For each $i\in \{1,2\}$, the set of tight nodes for $(D_i,\zF_i)$ is precisely $(V^t\cap U_{3-i})\cup \{u_i\}$. Thus, part {\bf (2)} holds.
\end{claim}
\begin{cproof}
As $(D,\zF)$ is a basic digraft, $\{u\}\in \zF$ for each source $u$ in $V^t$, and $V\setminus u\in \zF$ for each sink $u$ in $V^t$. Thus, by the definition of $(D_i,\zF_i)$, the set of its tight nodes contains $(V^t\cap U_{3-i})\cup \{u_i\}$. To see the reverse inclusion, let $u\in V_i$ be a tight node of $(D_i,\zF_i)$. If $u=u_i$, then clearly $u\in (V^t\cap U_{3-i})\cup \{u_i\}$. Otherwise, $u\in U_{3-i}$.

We first prove that if $x\in F(D,\zF)\cap \{x:x(\delta^+(U))=1\}$ then $x(\delta(u))=1$. It suffices to show this for $0,1$ vectors $x$. To this end, let $J\subseteq A$ such that $\1_J\in F(D,\zF)\cap \{x:x(\delta^+(U))=1\}$. Then by \Cref{de-composition}~(decomposition), $\1_{J_i}\in F(D_i,\zF_i)$ for $J_i=J\cap A_i$, so given that $u$ is a tight node for $(D_i,\zF_i)$, we obtain that $|J_i\cap \delta_{D_i}(u)|=1$, which implies that $|J\cap \delta_{D}(u)|=1$.

Subsequently, the facet of $F(D,\zF)$ obtained by setting $x(\delta^+(U))\geq 1$ to equality satisfies the equation $x(\delta(u))=1$. Given that $x(\delta^+(U))\geq 1$ is a facet-defining inequality for $F(D,\zF)$ that is not equivalent to $x(\delta(v))\geq 1$ for any active source $v$, it follows that $u$ must be a tight node of $(D,\zF)$. Thus, $u\in V^t\cap U_{3-i}$.
\end{cproof} 

It remains to prove part {\bf (3)}, i.e., $s(D_i,\zF_i)=0,i=1,2$. Suppose for a contradiction that $s(D_i,\zF_i)\geq 1$ for some $i\in \{1,2\}$. By \Cref{slack>=1}, $(D_i,\zF_i)$ has a non-trivial tight dicut $\delta_{D_i}(W)$ such that $W$ separates a pair of active sources of $(D_i,\zF_i)$, say $u\in V_i\setminus W$ and $w\in W$, which are also active for $(D,\zF)$ by Claim~2. Note that $u,w\neq u_i$.

We may assume that $u_i\notin W$ by replacing $W$ with $V_i\setminus W$, if necessary. Let us now consider the dicut $\delta_{D}(W)$ in $D$. (Note that $\delta_{D}(W)$ is equal to one of $\delta_D^{\pm}(W)$.) Note that $W\cap U_i=\emptyset$. Furthermore, each of $W,U_i,V\setminus (W\cup U_i)$ contains an active source of $(D,\zF)$, namely $w\in W, z_i\in U_i$, and $u\in V\setminus (W\cup U_i)$, where $z_i$ comes from Claim~1.

\begin{claim} 
	$F(D,\zF)\cap \{x:x(\delta_D(U_i))=1\}\subseteq \{x:x(\delta_D(W))=1\}$.
\end{claim}
\begin{cproof}
It suffices to prove this inclusion for $0,1$ vectors $x$. To this end, take $J\subseteq A$ such that $\1_J\in F(D,\zF)\cap \{x:x(\delta^+_D(U))=1\}$. Let $J_i:=J\cap A_i$. Then $\1_{J_i}\in F(D_i,\zF_i)$ by \Cref{de-composition}~(decomposition). Thus, $|J_i\cap \delta_{D_i}(W)|=1$ as $\delta_{D_i}(W)$ is a tight dicut for $(D_i,\zF_i)$, implying in turn that $|J\cap \delta_D(W)|=1$, as required. 
\end{cproof}
	
It follows from Claim~3 that $\delta_D(W)$ is either a tight dicut for $(D,\zF)$, or $x(\delta_D(W))\geq 1$ is a dicut inequality that defines the same facet of $F(D,\zF)$ as $x(\delta_D(U_i))\geq 1$. The former is not possible as $(D,\zF)$ is a basic digraft and $|W|\neq 1,|V|-1$. Subsequently, $x(\delta_D(W))=1$ and $x(\delta_D(U_i))=1$ must define the same facet. This implies that the first equation must be implied by the second equation, together with all the equations that define the affine hull of $F(D,\zF)$, which are of the form $x(\delta(v))=1,\,v\in V^t$ because $(D,\zF)$ is basic. 

Subsequently, we must have that $\1_{\delta(W)}$ is in the linear hull of $\1_{\delta(U_i)}$ and $\1_{\delta(v)},v\in V^t$. Subsequently, the vectors $\1_{\delta(W)},\1_{\delta(U_i)}$ and $\1_{\delta(v)},v\in V^t$ are linearly dependent. We have \begin{align*}
\pm\1_{\delta(W)} &= \sum_{v\in \sources(W)} \1_{\delta(v)} - \sum_{v\in \sinks(W)} \1_{\delta(v)}\\
\pm\1_{\delta(U_i)} &= \sum_{v\in \sources(U_i)} \1_{\delta(v)} - \sum_{v\in \sinks(U_i)} \1_{\delta(v)}.
\end{align*} Thus, after applying elementary `row' operations, the following vectors are linearly dependent: \begin{align*}
\sum_{v\in \sources(W)\cap V^a} \1_{\delta(v)}&\\
\sum_{v\in \sources(U_i)\cap V^a} \1_{\delta(v)}&\\
\1_{\delta(v)}& \quad \forall v\in V^t.
\end{align*} Note that $\sources(W)\cap V^a\ni w$ and $\sources(U_i)\cap V^a\ni z_i$ are nonempty and disjoint. Thus, given that $u\in (V\setminus W\setminus U_i)\cap V^a$, the linear dependence of the vectors above implies that of the vectors $\1_{\delta(v)},v\in V\setminus u$, which is a contradiction to \Cref{linear-independence-RE}. This finishes the proof of (3).
\end{proof}

\subsection{Proof of the Affine Critical Lemma}\label{subsec:affine-critical-LE}

Recall that by definition, a digraft $(D,\zF)$ is affine critical if $\aff(F(D,\zF))=\big\{x:x(\delta(v))=1,\,\forall v\in V^t\big\}$. As an immediate consequence of \Cref{slack=0}, we obtain the following remark.

\begin{RE}\label{slack-0-affine-critical}
A digraft is affine critical if and only if it has slack zero.
\end{RE}

While every basic digraft is affine critical, the converse may not hold, as having slack zero is not sufficient for being basic. We are now ready to prove the Affine Critical Lemma.

\begin{proof}[Proof of \Cref{affine-critical-LE}]
Let $(D=(V,A),\zF)$ be a basic digraft that is not robust, that is, there is a facet-defining dicut inequality $x(\delta^+(U))\geq 1$ for $F(D,\zF)$ that is not equivalent to $x(\delta^+(u))\geq 1$ for any active source~$u$. Clearly, $s(D,\zF)=0$. By \Cref{slack-cut-contraction}, each of $(D,\zF),(D_i,\zF_i),i=1,2$ contains an active source, for $i\in \{1,2\}$ every active source for $(D,\zF)$ in $U_{3-i}$ is an active source for $(D_i,\zF_i)$ and vice versa, and $s(D_i,\zF_i) = 0,i=1,2$, so by \Cref{slack-0-affine-critical}, $(D_i,\zF_i),i=1,2$ are affine critical digrafts, thus finishing the proof.
\end{proof}

\section{Proof of \Cref{main-digraft}}\label{sec:main-proof}

The proof proceeds by induction \b{on the number of vertices of the digraft}, with the base case being \b{that of} basic robust digrafts.

\begin{proof}[Proof of \Cref{basic-robust-IDP}]
Let $(D=(V,A),\zF)$ be a basic robust digraft. We need to show that $F(D,\zF)$ has the integer decomposition property, and $\aff(F(D,\zF)) = \{x:Mx=\1\}$ for some $M\in \cZ^{m\times n}$ with $m\geq 1$. To this end, let $V^t,V^a$ be the sets of tight and active nodes of $(D,\zF)$, respectively. Let $P:=F(D,\zF)$. It follows from the hypothesis that every facet-defining inequality of $P$ is either equivalent to $x_a\geq 0$ for some $a\in A$, or $x(\delta^+(v))\geq 1$ for some $v\in V^a$. Furthermore, by \Cref{arc-active-CO}, $\aff(P)$ is described by $x(\delta(v))=1,\,\forall v\in V^t$; this proves the second part of the theorem. For the first part, let us write
	 $$P=
	\left\{x\in \cR^A:x\geq \0;\,x(\delta^+(v))\geq 1,\,\forall v\in V^a;\, x(\delta(v))=1,\,\forall v\in V^t\right\}.
	$$ Take an integer $k\geq 1$, and let $x\in \cZ_{\geq 0}^A$ be an integral vector in $kP$. That is, $x\geq \0$, $x(\delta(v))=k$ for all $v\in V^t$, and $x(\delta^+(v))\geq k$ for all $v\in V^a$. By a result of de Werra (\cite{deWerra71}, see \cite{LP09}, Corollary 1.4.21) on edge-colourings of bipartite graphs, we can write $x$ as the sum of $x^1,\ldots,x^k\in \cZ_{\geq 0}^A$ such that for each $i$, $x^i(\delta(v))=1$ for all $v\in V^t$ and $x^i(\delta^+(v))= \lfloor x(\delta^+(v))/k\rfloor$ or $\lceil x(\delta^+(v))/k\rceil$ for all $v\in V^a$. In particular, each $x^i$ belongs to $P$. This finishes the proof.
\end{proof}

For the induction step, we will need to compose integral bases along contractible dicuts. The following technical though straightforward lemma will be useful for this purpose. Given subsets $A_1,A_2\subseteq A$ such that $A_1\cap A_2=C$ and $A_1\cup A_2=A$, and weights $w^i\in \cR^{A_i}$ such that $w^1_a=w^2_a,\,\forall a\in C$, we define $z:=w^1\odot w^2\in \cR^A$ as follows: $z_a:=w^i_a$ if $a\in A_i\setminus A_{3-i}$, and $z_a:=w^1_a=w^2_a$ if $a\in C$.

\begin{LE}\label{basis-going-up}
Let $(D=(V,A),\zF)$ be a digraft, and suppose $\delta^+(U)$ is a contractible dicut. Let $(D_i=(V_i,A_i),\zF_i),i=1,2$ be the $(U,V\setminus U)$-contractions of $(D,\zF)$. Suppose 
\begin{align*}
B_1&:=\{x^1,\ldots,x^{d_1}\}\subseteq F(D_1,\zF_1)\cap \{0,1\}^{A_1}\\
B_2&:=\{y^1,\ldots,y^{d_2}\}\subseteq F(D_2,\zF_2)\cap \{0,1\}^{A_2}
\end{align*}
are integral bases for $\lin(F(D_1,\zF_1))$ and $\lin(F(D_2,\zF_2))$, respectively. For each $a\in \delta^+(U)$, let $I_a:=\{i:x^i_a=1\}$ and $J_a:=\{j:y^j_a=1\}$. Then both $I_a,J_a$ are nonempty. Furthermore, write $I_a=\{i_1,\ldots,i_k\}$ and $J_a=\{j_1,\ldots,j_\ell\}$, and let \begin{align*}
			z^a_t&:=x^{i_1} \odot y^{j_t} \quad &t=1,\ldots,\ell\\
			z^a_{\ell+t}&:=x^{i_{1+t}}\odot y^{j_1}  &t=1,\ldots,k-1.
		\end{align*} 
	Let $B_1\odot B_2:=\{z^a_i:a\in \delta^+(U), 1\leq i\leq |I_a|+|J_a|-1\}$. 
	Then the following statements hold: \begin{enumerate}
	\item $B_1\odot B_2\subseteq F(D,\zF)\cap \{0,1\}^A\cap \{x:x(\delta^+(U))=1\}$, $|B_1\odot B_2| = d_1+d_2-|\delta^+(U)|$, and $B_1\odot B_2$ is linearly independent,
	\item if $x$ is an integer linear combination of the vectors in $B_1$ and $y$ of $B_2$, where $x_a=y_a\, \forall a\in \delta^+(U)$, then $x\odot y$ is an integer linear combination of the vectors in~$B_1\odot B_2$,
	\item $B_1\odot B_2$ is an integral basis for its linear hull,
	\item if $\delta^+(U)$ is a tight dicut, then $\lin(B_1\odot B_2) = \lin(F(D,\zF))$. 
	\end{enumerate}
\end{LE}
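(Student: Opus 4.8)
\textbf{Proof plan for Lemma~\ref{basis-going-up}.}

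The plan is to verify the four statements in order, using the Decomposition and Composition halves of \Cref{de-composition} as the main bridge between $(D,\zF)$ and its contractions, together with elementary linear algebra to keep track of dimensions.

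First I would settle the preliminary claim that $I_a,J_a\neq\emptyset$ for each $a\in\delta^+(U)$. By \Cref{arc-active} applied to $(D_1,\zF_1)$, there is $J\subseteq A_1$ with $a\in J$ and $\1_J\in F(D_1,\zF_1)$; writing $\1_J$ as an integral (in fact $0,1$) linear combination of $B_1$ forces some $x^i$ with $x^i_a=1$, so $I_a\neq\emptyset$, and symmetrically $J_a\neq\emptyset$. The $\odot$ operation is well-defined on the listed pairs precisely because $x^{i_1}_a = 1 = y^{j_t}_a$ for the chosen indices (and similarly for the second block), so each $z^a_t$ agrees on the common arc $a$; on the other arcs of $\delta^+(U)$ the two sides are $0$ for the pair defining $z^a_t$ unless they coincide, which one checks from the index choice — this is the routine bookkeeping I would not belabor.

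For (1): each $z^a_t = x^{i}\odot y^{j}$ with $x^i\in F(D_1,\zF_1)\cap\{0,1\}^{A_1}$, $y^j\in F(D_2,\zF_2)\cap\{0,1\}^{A_2}$, and $x^i,y^j$ agreeing on $\delta^+(U)$ (both select exactly the single arc $a$), so by \Cref{de-composition}~(Composition) the underlying arc set lies in $F(D,\zF)\cap\{x:x(\delta^+(U))=1\}$, and it is a $0,1$ vector. The count $|B_1\odot B_2| = d_1+d_2-|\delta^+(U)|$ follows because we contribute $|I_a|+|J_a|-1$ vectors for each $a\in\delta^+(U)$, and $\sum_a |I_a| = \sum_a \sum_i x^i_a = \sum_i x^i(\delta^+(U)) = d_1$ (each $x^i\in F(D_1,\zF_1)$ has $u_1$ a source with $\{u_1\}\in\zF_1$, hence $x^i(\delta^+(u_1))=x^i(\delta^+(U))=1$), and likewise $\sum_a|J_a| = d_2$; subtracting $1$ per arc gives $d_1+d_2-|\delta^+(U)|$. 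For linear independence: the restriction map $\cR^A\to\cR^{A_1}$ sends $z^a_t$ to $x^{i_1}$ (first block) or $x^{i_{1+t}}$ (second block), and the restriction to $A_2$ sends it to $y^{j_t}$ or $y^{j_1}$; a vanishing linear combination, restricted to $A_1$, groups coefficients by which $x^i$ appears and uses independence of $B_1$ to kill all second-block coefficients except possibly those over a fixed $x^{i_1}$, and then restricting to $A_2$ and using independence of $B_2$ finishes it — this is the one genuinely fiddly computation and I expect it to be the main obstacle, essentially a "block triangular" argument on the coefficient matrix indexed by $\delta^+(U)$.

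For (2): given $x=\sum_i \alpha_i x^i$ and $y=\sum_j\beta_j y^j$ with $\alpha,\beta$ integral and $x_a=y_a$ on $\delta^+(U)$, the value $x_a = \sum_{i\in I_a}\alpha_i$ on arc $a$ equals $y_a=\sum_{j\in J_a}\beta_j$, call it $m_a$; one writes $x\odot y$ explicitly as an integral combination of the $z^a_t$ by, for each $a$, distributing the $\alpha$'s and $\beta$'s across the $|I_a|+|J_a|-1$ vectors $z^a_\bullet$ so that the $A_1$-part telescopes to $\sum_{i\in I_a}\alpha_i x^i$-contributions and the $A_2$-part to $\sum_{j\in J_a}\beta_j y^j$-contributions, the shared scalar $m_a$ absorbed consistently; this is bookkeeping identical in spirit to the independence argument and integral because all the $\alpha_i,\beta_j$ are. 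For (3): by (1) $B_1\odot B_2$ is linearly independent, and by (2) every vector in $\lin(B_1\odot B_2)\cap\cZ^A$ — which, being a combination of the $z^a_t$, restricts to an integral combination of $B_1$ on $A_1$ and of $B_2$ on $A_2$, agreeing on $\delta^+(U)$ — is of the form $x\odot y$ with $x$ integral over $B_1$ and $y$ over $B_2$ (here we use that $B_1,B_2$ are integral bases for their linear hulls, so integrality of the restriction forces integral coefficients), hence lies in the $\cZ$-span of $B_1\odot B_2$; thus $B_1\odot B_2$ is an integral basis for its linear hull. For (4): if $\delta^+(U)$ is tight then $F(D,\zF)=F(D,\zF)\cap\{x:x(\delta^+(U))=1\}$, and by \Cref{de-composition} restriction is a bijection between $F(D,\zF)\cap\{0,1\}^A$ and the pairs of $0,1$ points in $F(D_i,\zF_i)$ agreeing on $\delta^+(U)$; since $F(D,\zF)$ is integral, $\lin(F(D,\zF))$ is spanned by $F(D,\zF)\cap\{0,1\}^A$, each such point equals $\1_{J_1}\odot\1_{J_2}$ with $\1_{J_i}\in F(D_i,\zF_i)\cap\{0,1\}^{A_i}$ hence (writing $\1_{J_i}$ via $B_i$ and applying (2)) lies in $\lin(B_1\odot B_2)$; combined with the reverse inclusion from (1), $\lin(B_1\odot B_2)=\lin(F(D,\zF))$.
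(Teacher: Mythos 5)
Your proposal is correct and follows essentially the same route as the paper's proof: you use Composition/Decomposition from \Cref{de-composition} to move between $F(D,\zF)$ and the contractions, observe that the sets $I_a$ (resp.\ $J_a$) partition $\{1,\dots,d_1\}$ (resp.\ $\{1,\dots,d_2\}$) since $u_1$ is a source with $\{u_1\}\in\zF_1$, restrict to $A_1$ and $A_2$ separately to prove linear independence and to reduce (3) to (2), and decompose $0,1$ points for (4). The parts you flag as "routine bookkeeping" (the well-definedness of $\odot$, the explicit integer coefficients in (2), and the block-triangular independence argument) are exactly the calculations the paper spells out, so nothing essential is missing.
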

\begin{proof} 
Given that $\delta^+(U)$ is a contractible dicut, it follows that $(D_i,\zF_i),i=1,2$ are digrafts, so by \Cref{arc-active-CO}, both $I_a,J_a$ are nonempty. Let $B:=B_1\odot B_2$. 

{\bf (1)} 
It follows from \Cref{de-composition}~(composition) that $B\subseteq F(D,\zF)\cap \{0,1\}^A$. Furthermore, it is clear from construction that
$x(\delta^+(U))=1$ for all $x\in B$, and {$$
|B| = \sum_{a\in \delta^+(U)} (|I_a|+|J_a|-1)=\sum_{a\in \delta^+(U)} |I_a| + \sum_{a\in \delta^+(U)} |J_a| - |\delta^+(U)| = |B_1|+|B_2| - |\delta^+(U)|;
$$ the last identity follows from the fact that $(I_a:a\in \delta^+(U)),(J_a:a\in \delta^+(U))$ partition $B_1,B_2$, respectively, as $x(\delta^+(U))=y(\delta^+(U))=1$ for all $x\in B_1,y\in B_2$.
}
To prove linear independence, suppose $\sum_{a,i} \lambda_i^a z_i^a = 0$ for some $\lambda_i^a\in \cR$ for all $a\in \delta^+(U),1\leq i\leq |I_a|+|J_a|-1$.
Fix $a\in \delta^+(U)$ with $I_a=\{i_1,\ldots,i_k\}$ and $J_a=\{j_1,\ldots,j_\ell\}$. 
Given that $B_1$ is linearly independent, for each $x^{i_t}$, the sum of the coefficients of vectors in $B$ of the form $x^{i_t}\odot y$ for some $y$, must be $0$. Subsequently, we have \begin{align}
\sum_{i=1}^\ell \lambda_{i}^a &=0 \label{eq:lin-ind-1}\\
\lambda^a_{\ell+1} = \cdots =\lambda^a_{\ell+k-1}&=0\label{eq:lin-ind-2}
\end{align} where \eqref{eq:lin-ind-1} computes the coefficient \b{sum} for $x^{i_1}\odot y$, while \eqref{eq:lin-ind-2} computes the {coefficient sums} for $x^{i_t}\odot y, t=2,\ldots,k$. Similarly, given that $B_2$ is linearly independent, for each $y^{j_t}$, the sum of the coefficients of vectors in $B$ of the form $x\odot y^{j_t}$ for some $x$, must be $0$. Subsequently, we obtain that 
\begin{align}
\b{\lambda_1^a +} \sum_{i=\ell \b{+1}}^{\ell+k-1} \lambda_{i}^a &= 0\label{eq:lin-ind-3}\\
\lambda^a_{2} = \cdots =\lambda^a_{\ell}&=0\label{eq:lin-ind-4}
\end{align} where \eqref{eq:lin-ind-3} computes the coefficient \b{sum} for $x\odot y^{j_1}$, while \eqref{eq:lin-ind-4} computes the {coefficient sums} for $x\odot y^{j_t}, t=2,\ldots,\ell$. Observe that \eqref{eq:lin-ind-1} and \eqref{eq:lin-ind-4} imply that $\lambda_1^a=0$, so together with \eqref{eq:lin-ind-2}, we obtain that $\lambda^a_i=0$ for all $1\leq i\leq k+\ell-1$. As this holds for all $a\in \delta^+(U)$, we obtain that $\lambda^a_i=0$ for all $a\in \delta^+(U),1\leq i\leq |I_a|+|J_a|-1$. 

{\bf (2)}
Suppose $x=\sum_{i} \alpha(x^i) x^i$ and $y=\sum_{j} \beta(y^j) y^j$ for integers $\alpha(x^i)$ and $\beta(y^j)$. Fix $a\in \delta^+(U)$ with $I_a=\{i_1,\ldots,i_k\}$ and $J_a=\{j_1,\ldots,j_\ell\}$. Now choose $\lambda^a_i$ for all $1\leq i\leq k+\ell-1$ such that 
\begin{align}
\sum_{i=1}^\ell \lambda_{i}^a &= \alpha(x^{i_1})\label{eq:lin-ind-5}\\
\lambda^a_{\ell+t-1} &= \alpha(x^{i_t}) \quad t=2,\ldots,k\label{eq:lin-ind-6}\\
\lambda^a_{t} &= \beta(y^{j_t}) \quad t=2,\ldots,\ell.\label{eq:lin-ind-7}
\end{align} \eqref{eq:lin-ind-6} and \eqref{eq:lin-ind-7} give us the values for $\lambda^a_t,t=2,\ldots,\ell+k-1$, all of which are clearly integral. Furthermore, \eqref{eq:lin-ind-5} and \eqref{eq:lin-ind-7} give us an integer value for $\lambda^a_1$: $$
\lambda^a_1 = \alpha(x^{i_1}) - \sum_{t=2}^{\ell} \beta(y^{j_t}).
$$ Since $x_a=y_a$, {$x_a=\sum_{t=1}^{k} \alpha(x^{i_t})$, and $y_a = \sum_{t=1}^{\ell} \beta(y^{j_t})$, it follows} that 
\b{$\beta(y^{j_1}) - \sum_{t=2}^{k} \alpha(x^{i_t})  = 
\alpha(x^{i_1}) - \sum_{t=2}^{\ell} \beta(y^{j_t}) = \lambda^a_1$}, so {
\begin{align}
\lambda_1^a+\sum_{i=\ell+1}^{\ell+k-1} \lambda_{i}^a &= \beta(y^{j_1}).\label{eq:lin-ind-8}
\end{align}}
\b{
It follows from \eqref{eq:lin-ind-5}-\eqref{eq:lin-ind-8} that \begin{align*}
\sum_{i=1}^{k+\ell-1} \lambda^a_i z^a_i 
&= 
\lambda_1^a (x^{i_1} \odot y^{j_1}) + 
\sum_{t=2}^{\ell} \lambda^a_t (x^{i_1}\odot y^{j_t}) + 
\sum_{t=2}^{k} \lambda^a_{\ell+t-1} (x^{i_{t}}\odot y^{j_1})\\
&=
\left(\sum_{t=1}^k \alpha(x^{i_t}) x^{i_t}\right)\odot \left(\sum_{t=1}^{\ell} \beta(y^{j_t}) y^{j_t}\right)
\end{align*} As this holds for all $a\in \delta^+(U)$, it follows that $\sum_{a,i} \lambda^a_i z^a_i = x\odot y$, so $x\odot y$ is an integer linear combination of the vectors in $B$, as required.
}

{\bf (3)} Let $f\in \lin(B)\cap \cZ^A$. Observe that $f=x\odot y$, where $x\in \lin(B_1)\cap \cZ^{A_1}$ and $y\in \lin(B_2)\cap \cZ^{A_2}$. As $B_1$ (resp.\ $B_2$) is an integral basis, $x$ (resp.\ $y$) must be an integer linear combination of the vectors in the set, so by part (2), $f = x\odot y$ is an integer linear combination of the vectors in $B$.

{\bf (4)} Clearly, $\lin(B)\subseteq \lin(F(D,\zF))$. For the reverse inclusion, pick a $0,1$ vector $f\in F(D,\zF)$. Take $J\subseteq A$ such that $\1_J=f$, and let $J_i:=J\cap A_i,i=1,2$. As $\delta^+(U)$ is a tight dicut, we have $f(\delta^+(U))=1$, so we get from \Cref{de-composition}~(decomposition) that $f_i:=\1_{J_i}\in F(D_i,\zF_i),i=1,2$. Subsequently, $f_i\in \lin(B_i),i=1,2$, so $f=f_1\odot f_2\in \lin(B)$.
\end{proof}

We are now ready to prove \Cref{main-digraft}.

\begin{proof}[Proof of \Cref{main-digraft}]
Let $(D=(V,A),\zF)$ be a digraft. Our goal is to prove that $F(D,\zF)\cap \{0,1\}^A$ contains an integral basis for $\lin(F(D,\zF))$. To this end, we may assume that whenever $\delta^+(U)$ is a tight dicut, then $U$ belongs to $\zF$, by adding it to the family if necessary. Note that this operation does not change the face $F(D,\zF)$.

\paragraph{Base case.} We shall proceed by induction \b{on the number of vertices of the digraft}. If $(D,\zF)$ is a basic robust digraft, then $F(D,\zF)$ has the integer decomposition property, and $\aff(F(D,\zF))=\{x:Mx=\1\}$ for some $M\in \cZ^{m\times n}$ with $m\geq 1$, by \Cref{basic-robust-IDP}. It therefore follows from \Cref{IDP-integral-basis} that $F(D,\zF)\cap \{0,1\}^A$ contains an integral basis $B$ for $\lin(F(D,\zF))$, so we have proved the base case. For the induction step, let $(D,\zF)$ be a digraft that is either non-basic or basic non-robust. 

\paragraph{Non-basic case.} Assume in the first case that $(D,\zF)$ is not basic. Thus, by the maximality of $\zF$, there exists a tight dicut $\delta^+(U)$ such that $1<|U|<|V|-1$. Let $D_i=(V_i,A_i),i=1,2$ be the $(U,V\setminus U)$-contractions of $(D,\zF)$. By the induction hypothesis, $F(D_i,\zF_i)\cap \{0,1\}^{A_i}$ contains an integral basis $B_i$ for $\lin(F(D_i,\zF_i))$, for $i\in \{1,2\}$. Then by \Cref{basis-going-up} parts (1), (3) and (4), $B_1\odot B_2\subseteq F(D,\zF)\cap \{0,1\}^A$ is an integral basis for $\lin(F(D,\zF))$.

\paragraph{Basic non-robust case.} Assume in the remaining case that $(D,\zF)$ is a basic non-robust digraft. Then there exists a facet-defining dicut inequality $x(\delta^+(U))\geq 1$ for $F(D,\zF)$ that is not equivalent to $x(\delta^+(u))\geq 1$ for any active source $u$. In particular, $|V|-1>|U|>1$. Let $U_1:=U, U_2:=V\setminus U$, and let $D_i=(V_i,A_i),i=1,2$ be the $(U,V\setminus U)$-contractions of $(D,\zF)$. 

By the induction hypothesis, $F(D_i,\zF_i)\cap \{0,1\}^{A_i}$ contains an integral basis $B_i$ for $\lin(F(D_i,\zF_i))$, for $i\in \{1,2\}$. Then by \Cref{basis-going-up} parts (1) and (3), $B':=B_1\odot B_2\subseteq F(D,\zF)\cap \{0,1\}^A$ is an integral basis for its linear hull. Unlike the previous case, $\lin(B')$ is no longer the same as $\lin(F(D,\zF))$, and we must add at least one element to $B'$.

By the Jump-Free Lemma (i.e., \Cref{jump-free-LE}), there exists $b\in F(D,\zF)\cap \{0,1\}^A$ such that $b(\delta^+(U))=2$. We claim that $B:=B'\cup \{b\}\subseteq F(D,\zF)\cap \{0,1\}^A$ is an integral basis for $\lin(F(D,\zF))$.

First, we prove that $B$ is a linear basis for $\lin(F(D,\zF))$. 
{It follows from \Cref{basis-going-up} part (1) that $B'$ is linearly independent. Furthermore, $b$ is not a linear combination of $B'$. For if it were, then $b = \sum_{z\in B'} \alpha_z z$ for some $\alpha_z\in \cR, z\in B'$. In particular, for any tight node $v$, $$1=b(\delta(v)) = \sum_{z\in B'} \alpha_z z(\delta(v)) = \1^\top z.$$
On the other hand,
$$
2 = b(\delta^+(U)) = \sum_{z\in B'} \alpha_z z(\delta^+(U)) = \1^\top z,$$ where we used $z(\delta^+(U))=1$ for all $z\in B'$ as guaranteed by \Cref{basis-going-up} part (1). However, $\1^\top z$ cannot be both $1$ and $2$, a contradiction. Thus, $B$ is linearly independent.} To show that $B$ is a linear basis, we count the linear dimension of $F:=F(D,\zF)$, which is $d:=1+\dim(F)$. We claim that $d=|B|$. To this end, let $F_i:=F(D_i,\zF_i),i=1,2$ and $d_i:=1+\dim(F_i),i=1,2$. By the Affine Critical Lemma (i.e., \Cref{affine-critical-LE}), $(D,\zF),(D_i,\zF_i),i=1,2$ are affine critical digrafts each of which contains an active source. Thus, $\kappa^t(D,\zF)=\kappa^t(D_1,\zF_1)=\kappa^t(D_2,\zF_2)=0$, and $s(D,\zF)=s(D_1,\zF_1)=s(D_2,\zF_2)=0$ by \Cref{slack-0-affine-critical}. Subsequently, by the slack formula in \Cref{def:slack}, 
	\begin{align*}
		d &= 1+|A|-|V^t|\\
		d_1&=1+|A_1|-|V^t_1|\\
		d_2&= 1+|A_2|-|V^t_2|,
	\end{align*} 
	where $V^t,V^t_i,i=1,2$ denote the sets of tight nodes of $(D,\zF),(D_i,\zF_i),i=1,2$, respectively. By \Cref{slack-cut-contraction} part (2), $V^t_1\cup V^t_2 = V^t \cup \{u_1,u_2\}$, implying in turn the first equality below: \begin{align*}
	d 
	&= d_1+d_2 - |\delta^+(U)| +1\\
	&= |B_1| + |B_2| - |\delta^+(U)| +1\\
	&=|B'|+1\\
	&=|B|.
	\end{align*} The second and last equalities are clear, while the third equality follows from \Cref{basis-going-up} part (1).
	
It remains to prove that $B$ is an \emph{integral} basis. To this end, pick an integral vector $f$ in $\lin(F(D,\zF))$. We now know that $f$ can be expressed as a linear combination of the vectors in $B$; let $\lambda_z\in \cR$ be the coefficient of $z\in B$. Given that $f$ is integral, $f(\delta^+(U))$ is an integer, which can be calculated alternatively as follows: 
$$
	f(\delta^+(U)) = \sum_{z\in B} \lambda_z z(\delta^+(U)) = 2\lambda_{b} + \sum_{z\in B'} \lambda_z = \lambda_{b}  + \1^\top \lambda.
	$$ Here we have used $z(\delta^+(U))=1$ for all $z\in B'$, guaranteed by \Cref{basis-going-up} part (1). As $B\subseteq F(D,\zF)$, we have $z(\delta(v))=1, \forall z\in B$ for any fixed tight node $v$. Subsequently, $\1^\top \lambda = \sum_{z\in B} \lambda_z = f(\delta(v))$ is an integer, implying in turn that $\lambda_{b} = f(\delta^+(U))-\1^\top \lambda$ is an integer. Now let $f':=f-\lambda_{b} b\in \cZ^A$. Evidently, $f'\in \lin(B')\cap \cZ^A$, so given that $B'$ is an integral basis for its linear hull, $f'$ is an integer linear combination of the vectors in $B'$, implying in turn that $\lambda$ is an integral vector. Thus, $B\subseteq F(D,\zF)\cap \{0,1\}^A$ is an integral basis for $\lin(F(D,\zF))$, thereby completing the induction step.
\end{proof}

\section{Proofs of \Cref{scr-theorem} and the applications}\label{sec:apps}

In this section, we start off by a useful mapping of the strengthening sets of a digraph to dijoins of another digraph. We then prove a general lattice theoretic fact about faces of $\scr(D)$ for a digraph $D$. After that, we prove \Cref{scr-theorem} and its three applications, namely, \Cref{ARF-partition-CO}, \Cref{p-adic-CO}, and finally \Cref{head-disjoint-CO}. Finally, we provide an important example showing that many of our results are best possible.

\subsection{A useful mapping}

The following theorem will be particularly useful for two of the proofs. The construction given in the proof has appeared before in the literature, e.g., \cite{Schrijver-note,Cornuejols24}.

\begin{theorem}\label{scr->dij}
Let $D=(V,A)$ be a digraph whose underlying undirected graph is $2$-edge-connected. Let $\zF$ be a family over ground set $V$ such that $\emptyset,V\notin \zF$, and the following face of $\scr(D)$ is nonempty: 
$$F:=\scr(D)\cap \left\{x\in \cR^A: x(\delta^+(U))-x(\delta^-(U))=1-|\delta^-(U)|,\, \forall U\in \zF\right\}.$$ Then there exists a digraft $(D',\zF')$ such that the mapping $x\mapsto \big(\begin{smallmatrix} x\\ \1-x\end{smallmatrix}\big)$ defines a bijection between the face $F$ of $\scr(D)$ and the face $F(D',\zF')$ of $\dij(D')$.
\end{theorem}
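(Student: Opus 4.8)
The plan is to construct $D'$ explicitly by a standard ``node-splitting'' trick that converts the mixed inequalities $\0 \le x \le \1$ together with \eqref{cut-ineq} into pure dicut inequalities for a bipartite digraph. First I would subdivide every arc $a = (u,v)$ of $D$ into a directed path $u \to w_a \to v$, thereby creating a new intermediate node $w_a$ for each $a \in A$; assign the variable $x_a$ to the arc $(u, w_a)$ and the variable $1 - x_a$ to the arc $(w_a, v)$. To make the digraph bipartite (every node a source or a sink), I would further split each original node $v \in V$ into a source copy and a sink copy joined appropriately, so that the $w_a$'s become sinks and the split node-copies become sources (or vice versa); this is the construction appearing in \cite{Schrijver-note, Cornuejols24}, so I would cite it and record only the properties needed below. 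The key point of the construction is that a $0,1$ vector $\binom{x}{\1-x}$ on the arcs of $D'$ picks exactly one of the two arcs out of each $w_a$, and the dicuts of $D'$ are in bijection with the nonempty proper vertex subsets $U$ of $D$ via $\delta^+_{D'}(\widetilde U) \leftrightarrow \delta^+(U) \cup \delta^-(U)$, in such a way that the constraint $x(\delta^+_{D'}(\widetilde U)) \ge 1$ translates precisely to $x(\delta^+(U)) + (1 - x)(\delta^-(U)) \ge 1$, i.e. to \eqref{cut-ineq}; the trivial dicuts around each $w_a$ give exactly $x_a + (1 - x_a) = 1$, which is vacuous and forces nothing.

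Next I would define $\zF'$. For each $U \in \zF$, the equality $x(\delta^+(U)) - x(\delta^-(U)) = 1 - |\delta^-(U)|$ is, after the substitution, exactly $x(\delta^+_{D'}(\widetilde U)) = 1$; so I take $\zF'$ to be the family $\{\widetilde U : U \in \zF\}$, \emph{augmented} by the sets forced by the digraft axioms — namely $V' \setminus v$ for every sink $v$ of $D'$ (axiom (c)), which must be checked to be automatically tight, since each $w_a$ has the single out-arc pair summing to $1$ and similarly for the split sink-copies. One then verifies the digraft axioms (a)–(d) for $(D', \zF')$: (a) is immediate from $\emptyset, V \notin \zF$ and the construction; (b) holds because the sets in $\zF$ have no incoming arc and this is preserved by subdivision/splitting; (c) is the augmentation just described; and (d), nonemptiness of $F(D', \zF')$, follows because any $x \in F$ (which is nonempty by hypothesis) maps to a point $\binom{x}{\1 - x}$ satisfying all the dicut inequalities with equality exactly where required — and one checks it lies in $[0,1]^{A'}$, which is where $2$-edge-connectedness of $D$ is used (to guarantee $\scr(D) \subseteq [0,1]^A$, already noted in the text). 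The $2$-edge-connectedness also ensures the underlying graph of $D'$ is $2$-edge-connected, another digraft axiom.

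Finally, the bijection itself is essentially a bookkeeping check: the map $x \mapsto \binom{x}{\1-x}$ is visibly affine and injective on $\cR^A$, its image lands in the affine subspace $\{(y,z) : y + z = \1 \text{ on each } w_a\text{-pair}\}$ which is forced for every point of $\dij(D')$, and the two coordinate blocks of the defining system of $F(D',\zF')$ translate term-by-term into the defining system of $F$ as computed above (generalized set-covering inequalities $\leftrightarrow$ dicut inequalities; the $\zF$-equalities $\leftrightarrow$ the $\zF'$-equalities; the trivial $w_a$-equalities $\leftrightarrow$ tautologies). Conversely, any $0,1$ point of $F(D',\zF')$ has exactly one arc out of each $w_a$ by the trivial dicut, hence is of the form $\binom{x}{\1-x}$; and since both $F$ and $F(D',\zF')$ are integral polytopes (the latter by the theorem of Lucchesi–Younger quoted for $\dij$, the former by Edmonds), agreement on vertices upgrades to an affine bijection between the polytopes. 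I expect the main obstacle to be purely organizational rather than mathematical: getting the node-splitting precise enough that ``bipartite,'' ``$2$-edge-connected underlying graph,'' and ``every $U \in \zF$ maps to a genuine dicut with $\delta^-_{D'} = \emptyset$'' all hold simultaneously, and verifying that the augmentation of $\zF'$ needed for axiom (c) does not accidentally cut down $F(D',\zF')$ — i.e. that those sets really are already tight for the image of $F$. Once the dictionary between dicuts of $D'$ and vertex subsets of $D$ is nailed down, everything else is a direct translation.
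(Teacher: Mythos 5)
Your high-level plan is the right one, and you correctly identify that the construction is the one in \cite{Schrijver-note,Cornuejols24}. However, your verbal description of that construction is wrong in a way that would derail the proof if carried out literally.

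You propose to subdivide each arc $a=(u,v)$ into a directed path $u\to w_a\to v$. Under that subdivision, $w_a$ has in-degree $1$ and out-degree $1$, so it is neither a source nor a sink; the ``trivial dicut around $w_a$'' you invoke does not exist (neither $\{w_a\}$ nor its complement has empty in-boundary). More seriously, the dicuts of the subdivided digraph do not encode \eqref{cut-ineq}: if $b=(p,q)\in\delta^-_D(U)$, the arc carrying the weight $1-x_b$ is $(w_b,q)$, which either lies inside or enters $U'$ but never leaves it, so a dicut inequality $x(\delta^+_{D'}(U'))\geq 1$ can never pick up the term $1-x_b$. Splitting the original nodes afterward does not repair this, since it does not change the degree or direction structure at $w_a$. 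The construction the paper actually uses replaces each arc $a=(r,s)$ by the \emph{pair} $(r,t_a)$ and $(s,t_a)$, both pointing \emph{into} the new node $t_a$ — not a path but a fold. This instantly makes every original node a source and every $t_a$ a sink (so no further node-splitting is needed), it gives a genuine trivial dicut $\delta^-_{D'}(t_a)$ expressing $x_a+(1-x_a)\geq 1$, and most importantly for $U\subset V$ and $\varphi(U):=U\cup\{t_a:a=(r,s),\,r,s\in U\}$ one has $\delta^-_{D'}(\varphi(U))=\emptyset$ and $\delta^+_{D'}(\varphi(U))=\{(r,t_a):a\in\delta^+_D(U)\}\cup\{(s,t_a):a\in\delta^-_D(U)\}$, whose value under $\bigl(\begin{smallmatrix}x\\\1-x\end{smallmatrix}\bigr)$ is exactly the left-hand side of \eqref{cut-ineq}. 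Once this is in place, your definition of $\zF'$ as $\{\varphi(U):U\in\zF\}\cup\{V'\setminus t_a:a\in A\}$ and the affine dictionary between the two face descriptions go through as you sketched. So the gap is the construction itself, not the surrounding bookkeeping.
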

\begin{proof}
Let $D'=(V',A')$ be the digraph obtained from $D$ by replacing every arc $a:=(r,s)\in A$, by the two arcs $(r,t_a),(s,t_a)$, where $t_a$ is a new node. \b{(See \Cref{fig:STR-to-dijoin} for an illustration of this mapping.)} Subsequently, $V' = V\cup \{t_a:a\in A\}$ and $A' = \{(r,t_a),(s,t_a):a=(r,s)\in A\}$. Note that the nodes of $D'$ in $V$ are sources, while the new nodes in $\{t_a:a\in A\}$ are sinks. Furthermore, the underlying undirected graph of $D'$ is $2$-edge-connected, as this is so for $D$.

Given a nonempty proper subset $U$ of $V$, denote by $$\varphi(U):=U\cup \{t_a:a=(r,s)\in A;\, r,s\in U\}.$$ Note that $\delta^+_{D'}(\varphi(U)) = \{(r,t_a):a=(r,s)\in \delta^+_D(U)\}\cup \{(s,t_a):a=(r,s)\in \delta^-_D(U)\}$ and $\delta^-_{D'}(\varphi(U)) = \emptyset$. Thus, $\varphi$ maps every nonempty proper subset of $V$ to a dicut of $D'$. Conversely, it can be readily checked for $U'\subset V'$ that, if $\delta^+_{D'}(U')$ is a minimal dicut of $D'$ such that $|U'|<|V'|-1$, then $U:=U'\cap V$ is a nonempty proper subset of $V$, and $\delta^+_{D'}(U') = \delta^+_{D'}(\varphi(U))$. 

Given $J\subseteq A$, denote by $$\phi(J):=\{(r,t_a):a=(r,s)\in J\}\cup \{(s,t_a):a=(r,s)\in A\setminus J\}.$$ Using the mapping $\varphi$ defined earlier, it can be readily checked that $J$ is a strengthening set in $D$ if, and only if, $J':=\phi(J)$ is a 
dijoin of $D'$ such that $|J'\cap \delta(t_a)|=1,\, \forall a\in A$. Subsequently, $\phi$ is a bijection between the strengthening sets $J$ in $D$ and the dijoins $J'$ in $D'$ such that $|J'\cap \delta(t_a)|=1,\, \forall a\in A$.

Let $$\zF':=\{\varphi(U):U\in \zF\}\cup \{V'\setminus t_a:a\in A\},$$ and let $F':=F(D',\zF')$. For $x\in \cR^A$, define $x'\in \cR^{V'}$ as follows: for $a=(r,s)\in A$, let $x'_{(r,t_a)}=x_a$ and $x'_{(s,t_a)} = 1-x_a$. Then \begin{align*}
x'(\delta^+_{D'}(\varphi(U))) &= x(\delta^+_D(U)) + |\delta^-_D(U)| - x(\delta^-_D(U)) &\forall U\subset V, U\neq \emptyset\\
x'(\delta^-_{D'}(t_a)) &= x_a+(1-x_a)=1 &\forall a\in A.
\end{align*} Subsequently, if $x\in \scr(D)$, then $x'\in \dij(D')$. Furthermore, $x\in F$ if, and only if, $x'\in F'$. In particular, $F'\neq \emptyset$, and so $(D',\zF')$ is the desired digraft.
\end{proof}

\subsection{Integer lattices and faces of the strongly connected re-orientations polytope}\label{subsec:scr-theorem-proof}

Let us recall some basic concepts of the theory of integer lattices; for a reference textbook we recommend (\cite{Martinet03}, Chapter 1). A subset $L\subseteq \cR^A$ is a \emph{lattice} if it is the set of integer linear combinations of finitely many vectors. Alternatively, $L$ is a lattice if it forms a subgroup of $\cR^A$ under addition that is \emph{discrete}, that is, there exists an $\varepsilon>0$ such that every pair of distinct vectors in $L$ are at distance $\geq \varepsilon$. 
Given a finite subset $G\subset \cR^A$, the \emph{lattice generated by $G$}, denoted $\lat(G)$, is the set of all integer linear combinations of the vectors in $G$. A \emph{lattice basis for $L$} is a set $B$ of linearly independent vectors that generates the lattice, i.e., $L = \lat(B)$. A nontrivial fact is that a lattice basis always exists.

Suppose now $L$ is an \emph{integer lattice}, that is, $L$ is a lattice and $L\subseteq \cZ^A$. Let $\overline{L}:=\lin(L)\cap \cZ^A$ which is another integer lattice that contains $L$. Note that $\overline{L}$ is the `densest' integer lattice in $\lin(L)$. It is known that $\overline{L}$ can be partitioned into a finite number of lattices, each of which is an integral shift of $L$, i.e., of the form $L+w:=\{v+w:v\in L\}$ for some $w\in \lin(L)\cap \cZ^A$. We refer to the number of parts in this partition as the \emph{index of $L$} and denote it by $\ind(L)\in \cZ_{\geq 1}$. Thus, the smaller the index of $L$, the denser the lattice is. Of particular interest is the case when $L$ is densest possible. Observe that $L$ has index $1$ if, and only if, $L$ contains an integral basis for $\lin(L)$.

\begin{theorem}\label{scr-theorem-extension}
	Let $D=(V,A)$ be a digraph whose underlying undirected graph is $2$-edge-connected. Let $\zF$ be a family over ground set $V$ such that $\emptyset,V\notin \zF$, $1-|\delta^-(U)|\neq 0$ for some $U\in \zF$, and the following face of $\scr(D)$ is nonempty:
	$$F:=\scr(D)\cap \left\{x\in \cR^A: x(\delta^+(U))-x(\delta^-(U))=1-|\delta^-(U)|,\, \forall U\in \zF\right\}.$$
	Then the following statements hold: \begin{enumerate}
\item The lattice generated by $F\cap \{0,1\}^A$ has a lattice basis contained in $F\cap \{0,1\}^A$.
\item Let $g:=\gcd\{1-|\delta^-(U)|:U\in \zF\}$. Then $gx\in \lat\left(F\cap \{0,1\}^A\right)$ for all $x\in \lin(F)\cap \cZ^A$.
\end{enumerate} 
	\end{theorem}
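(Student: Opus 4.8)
The plan is to deduce \Cref{scr-theorem-extension} from \Cref{main-digraft} via the bijection of \Cref{scr->dij}. First I would invoke \Cref{scr->dij} to obtain a digraft $(D',\zF')$ together with the affine bijection $\Phi: x\mapsto \big(\begin{smallmatrix} x\\ \1-x\end{smallmatrix}\big)$ carrying the face $F$ of $\scr(D)$ onto $F(D',\zF')$, and carrying $F\cap\{0,1\}^A$ onto $F(D',\zF')\cap\{0,1\}^{A'}$. Since $\Phi$ is an injective affine map whose linear part $x\mapsto \big(\begin{smallmatrix} x\\ -x\end{smallmatrix}\big)$ is integral with an integral left inverse (project onto the $A$-coordinates), $\Phi$ maps $\lin(F)$ isomorphically onto $\lin(F(D',\zF'))$, integral points to integral points in both directions, and hence induces a group isomorphism between $\lin(F)\cap\cZ^A$ and $\lin(F(D',\zF'))\cap\cZ^{A'}$ that restricts to an isomorphism $\lat(F\cap\{0,1\}^A)\cong \lat\big(F(D',\zF')\cap\{0,1\}^{A'}\big)$.

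For part (1): by \Cref{main-digraft}, $F(D',\zF')\cap\{0,1\}^{A'}$ contains an integral basis $B'$ for $\lin(F(D',\zF'))$; in particular $B'$ is linearly independent and generates the integer lattice $\lin(F(D',\zF'))\cap\cZ^{A'}$, which contains $\lat\big(F(D',\zF')\cap\{0,1\}^{A'}\big)$. But every vector of $F(D',\zF')\cap\{0,1\}^{A'}$ lies in $\lin(F(D',\zF'))\cap\cZ^{A'}=\lat(B')$, so in fact $\lat\big(F(D',\zF')\cap\{0,1\}^{A'}\big)=\lat(B')$ and $B'$ is itself a lattice basis for this lattice contained in $F(D',\zF')\cap\{0,1\}^{A'}$. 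Pulling back through $\Phi^{-1}$, the set $\Phi^{-1}(B')\subseteq F\cap\{0,1\}^A$ is a lattice basis for $\lat(F\cap\{0,1\}^A)$, proving (1). (Note the GCD hypothesis $g=1$ of \Cref{scr-theorem} is not needed here; only the weaker `$1-|\delta^-(U)|\neq 0$ for some $U$' is used, implicitly through the fact that \Cref{scr->dij} — and the text's remark that $F(D,\zF)=\scr(D)\cap\{\dots\}$ with the associated digraft — applies under that weaker hypothesis.)

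For part (2): fix $x\in\lin(F)\cap\cZ^A$ and let $x':=\Phi(x)\in\lin(F(D',\zF'))\cap\cZ^{A'}$. By (1) applied to $(D',\zF')$ — equivalently by \Cref{main-digraft} — the lattice $L':=\lat\big(F(D',\zF')\cap\{0,1\}^{A'}\big)$ equals the full lattice $\lin(F(D',\zF'))\cap\cZ^{A'}$, i.e.\ has index $1$, so in fact $x'\in L'$ already and $g=1$ would finish it; but to get the precise statement with general $g$ one should instead run the argument at the level of $\scr(D)$ itself without first declaring the digraft to have GCD one. Concretely: by \Cref{scr-theorem-extension}(1) we have a lattice basis $B\subseteq F\cap\{0,1\}^A$ of $L:=\lat(F\cap\{0,1\}^A)$; extend reasoning via the equation $x(\delta^+(U))-x(\delta^-(U))=1-|\delta^-(U)|$ valid on $\aff(F)$. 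Writing $x=\sum_{b\in B}\mu_b b$ with $\mu_b\in\cR$ (possible since $x\in\lin(F)=\lin(B)$), apply the linear functional $c_U:=\1_{\delta^+(U)}-\1_{\delta^-(U)}$: since every $b\in B\subseteq F$ satisfies $c_U^\top b = 1-|\delta^-(U)|$, we get $c_U^\top x = (1-|\delta^-(U)|)\,\1^\top\mu$. As $c_U^\top x\in\cZ$, this shows $(1-|\delta^-(U)|)\,\1^\top\mu\in\cZ$ for every $U\in\zF$, hence $g\cdot\1^\top\mu\in\cZ$ by taking a $\cZ$-combination of these integers realizing the GCD. Then $g x = (g\,\1^\top\mu)\, b_0 + \big(gx - (g\,\1^\top\mu)\,b_0\big)$ for a fixed $b_0\in B$, where the second term lies in $\lin(B)$ and, I claim, in $\cZ^A$; one finishes by showing it is an integral combination of $B$. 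The main obstacle is precisely this last reduction step — passing from `$g\cdot(\text{sum of coefficients})\in\cZ$' to `$gx\in\lat(F\cap\{0,1\}^A)$' — which is exactly the mechanism already used in the basic non-robust case of the proof of \Cref{main-digraft} (peeling off one basis vector to kill the non-integrality of the coefficient sum, then invoking that the remainder lies in a lattice known to have index $1$ in its linear hull restricted to the hyperplane $\1^\top\mu\in\cZ$); I would adapt that argument verbatim, using part (1) to supply the lattice basis $B$ and the integral-basis property to conclude $gx\in\lat(B)=\lat(F\cap\{0,1\}^A)$.
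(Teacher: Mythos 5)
The plan to transport the problem to the digraft $(D',\zF')$ via $f(x)=\big(\begin{smallmatrix} x\\ \1-x\end{smallmatrix}\big)$ and then invoke \Cref{main-digraft} is the same route the paper takes, but the execution has a false claim and a genuine gap.

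The assertion that $\Phi$ (or its linear part) ``maps $\lin(F)$ isomorphically onto $\lin(F(D',\zF'))$, integral points to integral points in both directions'' is incorrect. Since $\Phi$ is affine with $\Phi(\0)=\big(\begin{smallmatrix} \0\\ \1\end{smallmatrix}\big)$, the image $\Phi(\lin(F))$ is an affine (not linear) set; moreover $\big(\begin{smallmatrix} \0\\ \1\end{smallmatrix}\big)\notin\lin(F')$ (this is precisely the content of the hypothesis that $1-|\delta^-(U)|\neq 0$ for some $U\in\zF$: if $\sum\lambda_x x=\0$ over $x\in F$ then $\1^\top\lambda=0$), so neither $\Phi$ nor its linear part carries $\lin(F)$ into $\lin(F')$. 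The correct map from $\lin(F')$ to $\lin(F)$ is the coordinate projection $\pi$, and its inverse $\pi^{-1}$ (on $\lin(F)$) sends $x=\sum\lambda_y y$ to $\big(\begin{smallmatrix} x\\ (\1^\top\lambda)\1 - x\end{smallmatrix}\big)$; this inverse is only $\tfrac1g$-integral, not integral, when $g>1$. So the claimed group isomorphism $\lin(F)\cap\cZ^A\cong\lin(F')\cap\cZ^{A'}$ is false in exactly the regime the general $g$ is meant to address. Part (1) survives this error — $\pi$ is injective on $\lin(F')$ (again by the ``$\sum\lambda_x x=\0\Rightarrow\1^\top\lambda=0$'' observation, which you never actually verify), so $\pi(B')$ is linearly independent and $\lat(\pi(B'))=\pi(\lat(B'))=\pi(L')=L$ — but your justification via the ``isomorphism'' is not the reason this works.

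For part (2), you correctly extract the key fact that $g\cdot\1^\top\mu\in\cZ$ for any expression $x=\sum_{b\in B}\mu_b b$ (this is exactly the paper's Claim), but you then acknowledge that the reduction to $gx\in\lat(B)$ is an ``obstacle'' and suggest importing the peeling mechanism from the basic non-robust case of \Cref{main-digraft}. That is not the right tool: that argument relies on the Jump-Free Lemma and the existence of $b$ with $b(\delta^+(U))=2$, neither of which is relevant here, and without a further idea the step ``show $gx-(g\1^\top\mu)b_0$ is an integral combination of $B$'' is exactly as hard as the original claim. The missing ingredient is to go back to $\lin(F')$: set $w':=\sum\lambda_x f(x)=\big(\begin{smallmatrix} w\\ (\1^\top\lambda)\1-w\end{smallmatrix}\big)$ for $w\in\lin(F)\cap\cZ^A$; then $gw'\in\lin(F')\cap\cZ^{A'}$, so by the integral basis property of $B'$ it is an integer combination of $B'$, and projecting by $\pi$ gives $gw\in\lat(\pi(B'))=L$. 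As submitted, part (2) is incomplete.
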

\begin{proof}
	 Let $L$ be the lattice generated by $F\cap \{0,1\}^A$, $\overline{L}:=\lin(F)\cap \cZ^A$, and $g:=\gcd\{1-|\delta^-(U)|:U\in \zF\}$. By \Cref{scr->dij}, there exists a digraft $(D'=(V',A'),\zF')$ such that for $F':=F(D',\zF')$, the mapping $f:F\to F'$ defined as $f(x)=\big(\begin{smallmatrix} x\\ \1-x\end{smallmatrix}\big)$ is a bijection. Let $L'$ be the lattice generated by $F(D',\zF')\cap \{0,1\}^{A'}$. By \Cref{main-digraft}, there is an integral basis $B'\subseteq F'\cap \{0,1\}^{A'}$ for $\lin(F')$.

\begin{claim*} 
Let $w$ be an integral vector in $\lin(F)$, expressed as $w = \sum_{x\in F} \lambda_x x$. Then $\1^\top \lambda$ is $\frac{1}{g}$-integral. Furthermore, if $w=\0$, then $\1^\top \lambda=0$.
\end{claim*}
\begin{cproof}
Let $\tau:=\sum_{x\in F} \lambda_x$. Note that $$
w(\delta^+(U))-w(\delta^-(U))= \sum_{x\in F} \lambda_x (1-|\delta^-(U)|) = \tau(1-|\delta^-(U)|) \quad \forall U\in \zF.
$$ As $w$ is integral, we have $\tau(1-|\delta^-(U)|)\in \cZ$ for all $U\in \zF$, and so since $\gcd\{1-|\delta^-(U)|:U\in \zF\}=g$, it follows that $\tau$ is $\frac{1}{g}$-integral. Furthermore, if $w=\0$, then as $1-|\delta^-(U)|\neq 0$ for some $U\in \zF$, we have $0=w(\delta^+(U))-w(\delta^-(U))=\tau(1-|\delta^-(U)|)$, implying in turn that $\tau=0$.
\end{cproof}

Let $B$ be the pre-image of $B'$ under $f$, \b{i.e., $B=\{b\in F:f(b)\in B'\}$}. Observe that $B\subseteq F\cap \{0,1\}^A$. We shall prove that (a) $B$ is linearly independent, (b) $B$ is a lattice basis for $L$, and (c)  $gw\in L$ for all $w\in \lin(F)\cap \cZ^A$. 
\begin{enumerate}
\item[(a)] Suppose $\sum_{b\in B} \lambda_b b =\0$. It follows from the claim above that $\1^\top \lambda=0$, so $\sum_{b\in B} \lambda_b f(b) = \0$. The linear independence of $B'=\{f(b):b\in B\}$, along with the bijectivity of $f$, implies that $\lambda=\0$.

\item[(b)] By (a), it suffices to show that $\lat(B)=L$. Clearly, $\lat(B)\subseteq L$. For the reverse inclusion, let $w\in L$. Then $w = \sum_{x\in F\cap \{0,1\}^A} \lambda_x x$ for some integers $\lambda_x,x\in F\cap \{0,1\}^A$. Let $w':=\sum_{x\in F\cap \{0,1\}^A} \lambda_x f(x)$. As $\lambda$ is integral, $w'\in L'$, so $w' = \sum_{b\in B} \alpha_b f(b)$ for some integers $\alpha_b,b\in B$. Restricting to the coordinates in $A$, we obtain that $w = \sum_{b\in B}\alpha_b b\in \lat(B)$.

\item[(c)] Let $w\in\lin(F)\cap \cZ^A$. Write $w = \sum_{x\in F\cap \{0,1\}^A} \lambda_x x$, and let $\tau:=\1^\top \lambda$ which is $\frac{1}{g}$-integral by the claim above. Let $w':=\sum_{x\in F\cap \{0,1\}^A} \lambda_x f(x)$, which is $\frac{1}{g}$-integral as $\tau\in \frac{1}{g}\cZ$. Subsequently, $gw'$ is an integral vector in $\lin(F')$, so $gw' = \sum_{b\in B} \alpha_b f(b)$ for some integers $\alpha_b,b\in B$, as $B'$ is an integral basis for $\lin(F')$. Restricting to the coordinates in $A$, we obtain that $gw = \sum_{b\in B}\alpha_b b\in L$, as promised.
\end{enumerate}

Observe that (b) proves part {\bf (1)}, and (c) proves part {\bf (2)} of the theorem.
\end{proof}

Let us point out a subtle detail about part (1) of \Cref{scr-theorem-extension}. A set $G$ of generators may not contain a lattice basis for $\lat(G)$. For instance, $\lat(\{2,3\})=\cZ$, yet $\{2,3\}$ does not contain a lattice basis for $\cZ$. Thus, the claim that $F\cap \{0,1\}^A$ contains a lattice basis is non-trivial.

Let's look at part (2) of \Cref{scr-theorem-extension}. This part equivalently states that, for $L:=\lat\big(F\cap \{0,1\}^A\big)$ and $\overline{L}:=\lin(F)\cap \cZ^A$, the quotient group $\overline{L}/L$ is an abelian group where the order of every element divides $g$. Subsequently, every elementary divisor of $\overline{L}/L$ divides $g$. This implies in turn that $\ind(L)$ is the product of some divisors of $g$. Furthermore, if $g$ is a prime number, then $\overline{L}/L$ is an elementary $p$-primary group. For more on concepts relating to group theory, we refer the interested reader to Dummit and Foote's excellent textbook~\cite{Dummit04}, more specifically, Chapter 5, Theorem 5.

\subsection{Proof of the main theorem}

\begin{proof}[Proof of \Cref{scr-theorem}]
Let $D=(V,A)$ be a digraph whose underlying undirected graph is $2$-edge-connected. Let $\zF$ be a nonempty family over ground set $V$ such that $\emptyset,V\notin \zF$, and the following face of $\scr(D)$ is nonempty:
	$$F:=\scr(D)\cap \left\{x\in \cR^A: x(\delta^+(U))-x(\delta^-(U))=1-|\delta^-(U)|,\, \forall U\in \zF\right\}.$$ Suppose $\gcd\{1-|\delta^-(U)|:U\in \zF\}=1$. It then follows from \Cref{scr-theorem-extension} part (1) that the lattice $L$ generated by $F\cap \{0,1\}^A$ has a lattice basis $B\subseteq F\cap \{0,1\}^A$. Furthermore, it follows from part (2) that $L=\lin(F)\cap \cZ^A$, so $\ind(L)=1$, implying in turn that $B$ is an integral basis for $\lin(F)$ contained in $F\cap \{0,1\}^A$.
\end{proof}

\subsection{Subtractive partitioning of strengthening sets}

\begin{proof}[Proof of \Cref{ARF-partition-CO}]
Let $\tau\geq 2$ be an integer, and let $D=(V,A)$ be a digraph where the minimum size of a dicut is $\tau$. We will prove that there exists an assignment $\lambda_J\in \cZ$ to every strengthening set $J$ intersecting every minimum dicut exactly once, such that $\sum_{J}\lambda_J\1_J = \1$, $\1^\top \lambda = \tau$, and $\big\{\1_J:\lambda_J\neq 0\big\}$ will be an integral basis for its linear hull.

Let $\zF$ be the family of sets $U\subset V,U\neq \emptyset$ such that $\delta^-(U)=\emptyset$ and $|\delta^+(U)|=\tau$. Let $F:=\scr(D)\cap \{x:x(\delta^+(U))-x(\delta^-(U))=1-|\delta^-(U)|,\,\forall U\in \zF\}$. Observe that $F\cap \{0,1\}^A$ corresponds to the strengthening sets of $D$ that intersect every minimum dicut exactly once.

Since every dicut of $D$ (if any) has size at least $\tau$, it follows that $|\delta^+(U)|+(\tau-1)|\delta^-(U)|\geq \tau$ for all $U\subset V,U\neq \emptyset$, implying in turn that $x^\star:=\frac{1}{\tau}\1\in F$.

Since $\zF\neq \emptyset$, then $\gcd\{1-|\delta^-(U)|:U\in \zF\}=1$, so we may apply \Cref{scr-theorem} to conclude that $F\cap \{0,1\}^A$ contains an integral basis $B$ for $\lin(F)$. This implies that $\1=\tau x^\star\in \tau F$ is an integral linear combination of the vectors in $B$, say $\sum_{b\in B} \lambda_b \cdot b$. Furthermore, $\1^\top \lambda=\tau$, because $\1^\top \lambda = \sum_{b\in B} \lambda_b \cdot b(\delta^+(U)) = \1(\delta^+(U)) = \tau$ for any given $U\in \zF$. Given that $B$ is an integral basis for $\lin(F)$, it follows that $\{b:\lambda_b\neq 0\}$ is also an integral basis for its linear hull, so we are done.
\end{proof}

\subsection{Sparse $p$-adic optimal packings of dijoins}

\begin{proof}[Proof of \Cref{p-adic-CO}]
Let $D=(V,A)$ be a digraph whose underlying undirected graph is connected. 
Denote by $M_0$ the matrix whose columns are labeled by $A$, and whose rows are the indicator vectors of the dijoins of $D$. Consider the following pair of dual linear programs:
\begin{align}
&\min\{\1^\top x:M_0x\geq \1,x\geq \0\}\label{eq:dijoin-LP}\tag{$P_0$}\\
&\max\{\1^\top y:M_0^\top y\leq \1,y\geq \0\}.\label{eq:dijoin-LP-dual}\tag{$D_0$}
\end{align}
Let $p$ be a prime number. Our goal is to exhibit a $p$-adic optimal solution to \eqref{eq:dijoin-LP-dual} with at most $2|A|$ nonzero entries. 

It is known that the basic optimal solutions of \eqref{eq:dijoin-LP} are precisely the indicator vectors of the minimum dicuts of $D$ (\cite{Lucchesi78}, see~\cite{Cornuejols01}, \S1.3.4). Let $\tau\geq 1$ be the minimum size of a dicut, which is therefore the common optimal value of the primal and dual \b{linear programs}. If $\tau=1$, then any vector $y$ that is a standard unit vector is optimal for the dual, and we are clearly done. Otherwise, $\tau\geq 2$. \b{In particular, the underlying undirected graph of $D$ is $2$-edge-connected.} Let $\zF$ be the family of sets $U\subset V,U\neq \emptyset$ such that $\delta^-(U)=\emptyset$ and $|\delta^+(U)|=\tau$. By definition, $\zF\neq \emptyset$, so $\gcd\{1-|\delta^-(U)|:U\in \zF\}=1$. 

By complementary slackness for \eqref{eq:dijoin-LP} and \eqref{eq:dijoin-LP-dual}, there exists a dijoin, and therefore a minimal dijoin~$J$, which intersects every dicut $\delta^+(U),U\in \zF$ exactly once. As every minimal dijoin of $D$ is a strengthening set, $\1_J$ belongs to the following face of $\scr(D)$: 
$$F:=\scr(D)\cap \left\{x\in \cR^A: x(\delta^+(U))-x(\delta^-(U))=1-|\delta^-(U)|,\, \forall U\in \zF\right\}.$$ As $F\neq \emptyset$, and $\gcd\{1-|\delta^-(U)|:U\in \zF\}=1$, we can apply \Cref{scr-theorem} to conclude that $F\cap \{0,1\}^A$ contains an integral basis $B$ for $\lin(F)$. 

Let $M_1$ be the matrix whose rows are the points in $\scr(D)\cap \{0,1\}^A$. Since every strengthening set is also a dijoin, it follows that $M_1$ is a row submatrix of $M_0$. Let $M_2$ and $M_4$ be the row submatrices of $M_1$ corresponding to the vectors in $F\cap \{0,1\}^A$ and $B$, respectively. \b{Our goal is to find a row submatrix $M_3$ of $M_0$, sandwiched between $M_2$ and $M_4$, with at most $|A|+|B|\leq 2|A|$ rows such that the system $M_3^\top y=\1,\1^\top y=\tau,y\geq \0$ has a $p$-adic solution. See \Cref{fig:M0-M4} for a visual illustration of the matrices.}

\b{
\begin{figure}[ht]
\begin{tikzpicture}[scale=1]    
    \def\totalheight{5}
    \def\verticalfourth{4}
    \def\verticalthird{3}
    \def\verticalsecond{2}
    \def\verticalfirst{1}
    \def\width{3}
    
    \draw[thick] (0,\verticalfourth) -- (\width,\verticalfourth);
    \node at (\width-0.3,\verticalfourth+0.3) {$M_4$};
    \node[right] at (\width+0.2, \verticalfourth+0.3) {The rows of $M_4$ are the vectors in $B$.};
    
    \draw[dashed] (0,\verticalthird) -- (\width,\verticalthird);
    \node at (\width-0.3,\verticalthird+0.3) {$M_3$};
    \node[right] at (\width+0.2, \verticalthird+0.3) {$M_3$ is a submatrix sandwiched between $M_2,M_4$ which we are after.};
    
    \draw[thick] (0,\verticalsecond) -- (\width,\verticalsecond);
    \node at (\width-0.3,\verticalsecond+0.3) {$M_2$};
    \node[right] at (\width+0.2, \verticalsecond+0.3) {The rows of $M_2$ are the points in $F\cap \{0,1\}^A$.};
    
    \draw[thick] (0,\verticalfirst) -- (\width,\verticalfirst);
    \node at (\width-0.3,\verticalfirst+0.3) {$M_1$};
    \node[right] at (\width+0.2, \verticalfirst+0.3) {The rows of $M_1$ are the points in $\scr(D)\cap \{0,1\}^A$.};
    
    \draw[thick] (0,0) rectangle (\width,\totalheight);
    \node at (\width-0.3,0.3) {$M_0$};
    \node[right] at (\width+0.2, 0.3) {The rows of $M_0$ correspond to the dijoins of $D$.};
    
\end{tikzpicture}
\caption{\b{An illustration of $M_i,0\leq i\leq 4$, where $M_i$ is a row submatrix of $M_{i-1}$ for $1\leq i\leq 4$.}}
\label{fig:M0-M4}
\end{figure}
}

\setcounter{claim}{0}

\begin{claim} 
The system $M_4^\top y=\1, \1^\top y=\tau$ has an integral solution $\lambda$.
\end{claim}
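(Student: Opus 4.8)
The plan is to produce the integral solution $\lambda$ explicitly as the coefficient vector expressing $\1$ in the integral basis $B$, and then to verify that the extra normalization $\1^\top\lambda=\tau$ holds automatically.

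First I would check that the fractional point $x^\star:=\tfrac1\tau\1$ lies in $F$. For every $U\in\zF$ we have $\delta^-(U)=\emptyset$ and $|\delta^+(U)|=\tau$, so $x^\star(\delta^+(U))-x^\star(\delta^-(U))=1=1-|\delta^-(U)|$. For an arbitrary nonempty proper $U\subset V$, the hypothesis that every dicut has size at least $\tau$ gives $|\delta^+(U)|+(\tau-1)|\delta^-(U)|\geq\tau$, which rearranges to $\tfrac1\tau|\delta^+(U)|+\tfrac{\tau-1}{\tau}|\delta^-(U)|\geq 1$; that is, $x^\star$ satisfies \eqref{cut-ineq}. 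Hence $x^\star\in F$, and in particular $\1=\tau x^\star\in\lin(F)\cap\cZ^A$.

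Next, recall that $B\subseteq F\cap\{0,1\}^A$ is an integral basis for $\lin(F)$, as supplied by \Cref{scr-theorem} earlier in the proof of \Cref{p-adic-CO}. Therefore the integral vector $\1$ can be written as an integral linear combination $\1=\sum_{b\in B}\lambda_b\,b$ with $\lambda\in\cZ^B$; in matrix form this is precisely $M_4^\top\lambda=\1$ with $\lambda$ integral. Finally, fixing any $U\in\zF$ (which exists since $\tau$ is attained by some dicut) and using that each $b\in B$ satisfies $b(\delta^+(U))=1$ (because $\delta^-(U)=\emptyset$ and $b$ obeys the defining equation of $F$ at $U$), we get $\1^\top\lambda=\sum_{b\in B}\lambda_b=\sum_{b\in B}\lambda_b\,b(\delta^+(U))=\1(\delta^+(U))=|\delta^+(U)|=\tau$. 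This is the desired integral solution.

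I do not anticipate a genuine obstacle: the claim is essentially a repackaging of the conclusion of \Cref{scr-theorem} for the current face $F$, combined with the routine membership $x^\star\in F$. The only points that need a moment's care are verifying $x^\star\in F$ and that $\zF\neq\emptyset$, both of which follow immediately from $\tau$ being the minimum dicut size.
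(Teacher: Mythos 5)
Your argument is correct and follows essentially the same route as the paper: verify $\tfrac1\tau\1\in F$ via the inequality $|\delta^+(U)|+(\tau-1)|\delta^-(U)|\geq\tau$, expand $\1=\tau x^\star$ in the integral basis $B$, and recover $\1^\top\lambda=\tau$ by evaluating along a tight dicut $\delta^+(U)$, $U\in\zF$. The only difference is that you spell out the tight-dicut computation for $\1^\top\lambda=\tau$ rather than citing $\tfrac1\tau\1\in F$ directly, which is a matter of exposition, not substance.
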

\begin{cproof}
Since every dicut of $D$ has size at least $\tau$, it follows that $|\delta^+(U)|+(\tau-1)|\delta^-(U)|\geq \tau$ for all $U\subset V,U\neq \emptyset$, implying in turn that $x^\star:=\frac{1}{\tau}\1\in F$. Subsequently, $\1=\tau x^\star$ can be expressed as an integer linear combination of the vectors in $B$, say $\1 = \sum_{b\in B} \lambda_b \cdot b$ for $\lambda_b\in \cZ,b\in B$. Note that $\frac{1}{\tau}\sum_{b\in B} \lambda_b = 1$ given that $\frac{1}{\tau}\1\in F$. Thus, $\lambda$ is the desired solution.
\end{cproof}

Consider the following pair of dual linear programs: \begin{align}
&\min\{\1^\top x:M_1x\geq \1,x\geq \0\}\label{eq:1acf-LP}\tag{$P_1$}\\
&\max\{\1^\top y:M_1^\top y\leq \1,y\geq \0\}.\label{eq:1acf-LP-dual}\tag{$D_1$}
\end{align}
Given that every strengthening set is a dijoin, and every minimal dijoin is a strengthening set, \eqref{eq:1acf-LP} is equivalent to \eqref{eq:dijoin-LP}. Thus, \eqref{eq:1acf-LP} is integral and the indicator vector of every minimum dicut of $D$ is optimal for it. Subsequently, the optimal value of \eqref{eq:1acf-LP-dual} is $\tau$. \b{Note that this dual linear program computes the maximum value of a fractional packing $y\geq \0$ of strengthening sets of $D$ such that every arc $a\in A$ has \emph{congestion} at most one, i.e., $\sum \left(y_z : z\in \scr(D)\cap \{0,1\}^A,z_a=1\right)\leq 1$.} By complementary slackness for this pair of linear programs, for any optimal solution $\bar{y}$ for \eqref{eq:1acf-LP-dual}, we have $\bar{y}_z>0$ only if $z\in F\cap \{0,1\}^A$.
\b{To see this, note that if $\bar{y}_z>0$ for some $z\in \scr(D)\cap \{0,1\}^A$, then by complementary slackness, the constraint of $M_1x\geq \1$ corresponding to $z$, i.e., $z^\top x\geq 1$, must be satisfied at equality at every optimal solution \eqref{eq:1acf-LP}. In particular, $z(\delta^+(U))=1$ for every minimum dicut $\delta^+(U)$ of $D$. Thus, $z\in F$.}

\begin{claim} 
The system $M_2^\top y=\1,\1^\top y = \tau,y\geq \0$ has a solution $y^\star$ that is strictly positive.
\end{claim}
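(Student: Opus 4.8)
The plan is to deduce Claim~2 from the single assertion that $x^\star:=\tfrac1\tau\1$ lies in the \emph{relative interior} of $F$. Assume this for the moment. Since $F$ is a face of the integral polytope $\scr(D)$, it is itself integral, so $F=\operatorname{conv}\big(F\cap\{0,1\}^A\big)$; and any point in the relative interior of a polytope is a strictly positive convex combination of \emph{all} of its vertices (if $b$ denotes the barycenter of $F\cap\{0,1\}^A$, then $b$ lies in the relative interior of $F$, so for small $\varepsilon>0$ the point $(1+\varepsilon)x^\star-\varepsilon b$ lies in $F$; writing it as a convex combination of $F\cap\{0,1\}^A$ and averaging back with $b$ yields a convex representation of $x^\star$ with all coefficients positive). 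Writing $x^\star=\sum_{z\in F\cap\{0,1\}^A}\mu_z z$ with $\mu>\0$ and $\1^\top\mu=1$, I would then set $y^\star_z:=\tau\mu_z$: this is strictly positive, $M_2^\top y^\star=\tau x^\star=\1$, and $\1^\top y^\star=\tau$, as desired. (Recall that $x^\star\in F$ was already verified in the proof of Claim~1.)

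It remains to show $x^\star\in\operatorname{relint}(F)$, and for this I would argue that every inequality occurring in the description of $\scr(D)$ that is tight at $x^\star$ is in fact satisfied with equality on all of $F$. As $\tau\ge2$ gives $0<x^\star_a<1$ for every arc $a$, no box inequality is tight, so only the inequalities \eqref{cut-ineq} matter. Substituting $x^\star$ into the constraint for $U$ shows that it is tight exactly when $|\delta^+(U)|=\tau-(\tau-1)|\delta^-(U)|$ --- the inequality ``$\ge$'' here being the bound $|\delta^+(U)|+(\tau-1)|\delta^-(U)|\ge\tau$ already used in the proof of Claim~1. A short case analysis on $m:=|\delta^-(U)|$ then finishes: if $m=0$ then $|\delta^+(U)|=\tau$, so $U\in\zF$ and $x(\delta^+(U))=1$ is one of the defining equations of $F$; if $m\ge1$ then tightness forces $|\delta^+(U)|\le1$, and the tight constraint follows from \eqref{cut-ineq} applied to both $U$ and $V\setminus U$ --- when $|\delta^+(U)|=1$ (hence $m=1$) the two instances of \eqref{cut-ineq} force the unique arc of $\delta^+(U)$ and the unique arc of $\delta^-(U)$ to carry equal values on $\scr(D)$, which is exactly the tight constraint, and when $|\delta^+(U)|=0$ (which forces $\tau=2$, $m=2$) they force $x(\delta^-(U))=1$ on $\scr(D)$, which is again the tight constraint (and moreover a defining equation of $F$, since $V\setminus U$ is then a minimum dicut). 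In every case the tight inequality holds as an equation on $F$, so $x^\star\in\operatorname{relint}(F)$.

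I expect the second paragraph to be the delicate point: one must be certain that no ``spurious'' face of $\scr(D)$ passes through $x^\star$, i.e., that the only \eqref{cut-ineq} inequalities tight at $x^\star$ are the minimum-dicut equations already cutting out $F$, together with inequalities that were equations on $\scr(D)$ to begin with. The observations making this work are that $m\ge1$ combined with tightness forces $|\delta^+(U)|\in\{0,1\}$, and that in those two cases the constraint for $U$ and the constraint for $V\setminus U$ together already determine the relevant coordinates on $\scr(D)$; the $\tau=2$ boundary case must be treated explicitly. Everything else is routine.
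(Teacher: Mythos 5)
Your proof is correct, and it takes a genuinely different route from the paper's. The paper transfers $F$ to the face $F(D',\zF')$ of the dijoin polyhedron of the auxiliary digraft via \Cref{scr->dij}, sets up a weighted minimum-dicut linear program on $D'$ with weight $c=\tau f(x^\star)$, establishes the three structural facts (i)--(iii) about minimum $c$-weight dicuts, and then invokes strict complementarity to produce a strictly positive packing $\hat y$, which is pulled back to $y^\star$. You instead work entirely inside $\scr(D)$: you argue geometrically that $x^\star=\tfrac1\tau\1$ lies in the relative interior of $F$ by checking, constraint by constraint, that every inequality in the description of $\scr(D)$ that is tight at $x^\star$ already holds as an equality on all of $F$ --- either because it is one of the minimum-dicut equations cutting out $F$ (case $m=0$), or because \eqref{cut-ineq} for $U$ together with \eqref{cut-ineq} for $V\setminus U$ already forces equality on all of $\scr(D)$ (cases $m=1$ and, for $\tau=2$, $m=2$) --- and then you use the barycenter trick to get a strictly positive convex representation of $x^\star$ by the vertices of $F$. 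Your case analysis is exhaustive (for $m\ge 3$ the equation $|\delta^+(U)|+(\tau-1)m=\tau$ has no solution with $|\delta^+(U)|\ge0$ and $\tau\ge2$), and the barycenter argument is sound because $F=\mathrm{conv}\big(F\cap\{0,1\}^A\big)$ as a face of the integral polytope $\scr(D)\subseteq[0,1]^A$. The two approaches are at heart dual descriptions of the same phenomenon --- strict complementarity is the LP-theoretic counterpart of the relative-interior characterization --- but yours avoids the digraft transform entirely for this claim and is correspondingly more self-contained; the paper's version has the advantage of reusing the digraft machinery and the explicit properties (i)--(iii), which also feed into the surrounding argument about congestion and support.
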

\begin{cproof}
By \Cref{scr->dij}, there exists a digraft $(D'=(V',A'),\zF')$ such that for $F':=F(D',\zF')$, the mapping $f:F\to F'$ defined as $f(x)=\big(\begin{smallmatrix} x\\ \1-x\end{smallmatrix}\big)$ is a bijection. Let $c:=\tau f(x^\star) = \big(\begin{smallmatrix} \1\\ (\tau-1)\1\end{smallmatrix}\big)\in \cR^{A'}$. \b{The \emph{$c$-weight} of a dicut $\delta_{D'}^+(U)$ is $\sum_{a\in \delta_{D'}^+(U)} c_a$.} Then the minimum $c$-weight of a dicut of $D'$ is $\tau$, and every $\delta_{D'}^+(U),U\in \zF'$ is a {dicut of minimum $c$-weight, i.e., a \emph{minimum weight $c$-dicut}}. 

\b{
Let $M'_0$ be the matrix whose columns are labeled by $A'$, and whose rows are the indicator vectors of the dijoins of $D'$. Consider the following pair of dual linear programs:
\begin{align}
&\min\{c^\top x':M'_0x'\geq \1,x'\geq \0\}\label{eq:dijoin-LP'}\tag{$P'_0$}\\
&\max\{\1^\top y':{M'_0}^\top y'\leq c,y'\geq \0\}.\label{eq:dijoin-LP-dual'}\tag{$D'_0$}
\end{align} The joint optimal value of these linear programs is $\tau$.} 
Furthermore, {we have the following properties:}
\begin{enumerate}
\item[(i)] every arc of $A'$ appears in a minimum $c$-weight dicut of $D'$\b{: this is ensured by our construction of the digraph $D'$},
\item[(ii)] if $J'$ is a dijoin of $D'$ which intersects every minimum $c$-weight dicut of $D'$ exactly once, then $\1_{J'} \in F'\cap \{0,1\}^{A'}${: this follows from the definition of $F'$, and the fact that every $\delta^+_{D'}(U),U\in \zF'$ is a minimum $c$-weight dicut,}
\item[(iii)] if $\1_{J'} \in F'\cap \{0,1\}^{A'}$, then $J'$ is a dijoin of $D'$ which intersects every minimum $c$-weight dicut of $D'$ exactly once. \b{To this end, pick a minimum $c$-weight dicut $C:=\delta_{D'}(U')\subseteq A'$ of $D'$, which has $c$-weight $\tau$. If $|U'|=|V'|-1$, then clearly $|C\cap J'|=1$. 

Otherwise, $|U'|<|V'|-1$. Let $U:=U'\cap V$. Then $C = \delta^+_{D'}(\varphi(U))$ and $|C|=|\delta_D(U)|$. As the underlying undirected graph of $D$ is $2$-edge-connected, it follows that $|C|\geq 2$. Choose $\1_J\in F\cap \{0,1\}^A$ such that $f(\1_J) = \1_{J'}$. Then $|C\cap J'| = |\delta^+_D(U)\cap J|+|\delta^-_D(U)|-|\delta^-_D(U)\cap J|$. (See the proof of \Cref{scr->dij} for more explanation.)

If $|C|\geq 3$, then $c_a=1$ for all $a\in C$, so $\delta_D^+(U)=C\subseteq A$ is a dicut of $D$ of minimum size. Subsequently, $U\in \zF$, so $|J\cap C|=1$ as $\1_J\in F$. Given that $|C\cap J'| = |C\cap J|$ in this case, it follows that $|C\cap J'|=1$.

If $|C|=2$, then $|\delta_D(U)|=2$, so in any strongly connected re-orientation of $D$, exactly one arc will leave $U$, implying in turn that $|C\cap J'| = |\delta^+_D(U)\cap J|+|\delta^-_D(U)|-|\delta^-_D(U)\cap J|=1$, as required.
}
\end{enumerate}

As (iii) holds, it follows from strict complementarity that there exists {an optimal solution} $\hat{y}$ {of \eqref{eq:dijoin-LP-dual'}} such that $\hat{y}_{J'}>0$ for every $J'$ such that $\1_{J'} \in F'\cap \{0,1\}^{A'}$. 
As (ii) holds, it follows from complementary slackness that $\hat{y}_{J'}=0$ for every dijoin $J'$ such that $\1_{J'} \notin F'\cap \{0,1\}^{A'}$. 
Finally, as (i) holds, it follows from complementary slackness that \b{for} every arc \b{$a$} of $A'$, {we have that $\sum\left(\hat{y}_{J'} :  \1_{J'}\in F'\cap \{0,1\}^{A'}\right) = c_a$}. 

Let $y^\star\in \cR_{\geq 0}^{F\cap \{0,1\}^A}$ be defined as follows: 
for every $z\in F\cap \{0,1\}^A$, let $y^\star_z = \hat{y}_{J'}$ where $\1_{J'} = f(z)$. 
Since every arc {$a$ of $A$ has congestion exactly $c_a=1$ in $\hat{y}$}, and since $\hat{y}_{J'}=0$ for every dijoin $J'$ such that $\1_{J'} \notin F'\cap \{0,1\}^{A'}$, it follows that $M_2^\top y^\star = \1$. By construction, $y^\star_z>0$ for all $z\in F\cap \{0,1\}^A$. Finally, $\1^\top y^\star = \1^\top \hat{y} = \tau$, so $y^\star$ is the desired optimal solution for \eqref{eq:1acf-LP-dual}.
\end{cproof}

Consider now the polyhedron $P=\{y:M_2^\top y=\1,\1^\top y = \tau, y\geq \0\}$. The existence of $y^\star$ from Claim~2 implies that $\aff(P) = \{y:M_2^\top y = \1, \1^\top y=\tau\}$. Furthermore, the existence of $\lambda$ from Claim~1 implies that $\aff(P)$ contains an integral, hence $p$-adic point (recall that $M_4$ is a row submatrix of $M_2$). Thus, it follows from (\cite{Abdi24-TDD}, Lemma 2.2) that $P$ contains a $p$-adic point. Our goal is to find a $p$-adic point in $P$ of \emph{support size} at most $|A|+|B|$.

We know from Claim~1 that the system $M_4^\top y =\1,\1^\top y = \tau$ in $|B|$ variables has an integral, hence $p$-adic solution. This system, however, may not have a nonnegative $p$-adic solution. In what follows, we argue that after adding at most $|A|$ rows from $M_2$ to $M_4$ we can guarantee a nonnegative solution as well.

Consider an optimal basic feasible solution $(y^\star,x_0^\star,x^\star)$ to the following bounded linear program \begin{equation*}
\max\left\{x_0:M_2^\top y=\1; \1^\top y = \tau; y_{b}- x_0-x_b=0,\,\forall b\in B ; y\geq \0;x_b\geq 0,\,\forall b\in B; x_0\geq 0\right\}
\end{equation*} which is in standard equality form. Since $M_2^\top y=\1,\1^\top y=\tau, y>\0$ is feasible by Claim~2, it follows that $x_0^\star>0$, so $y^\star_{b}>0$ for all $b\in B$. As a basic feasible solution, $(y^\star,x_0^\star,x^\star)$ has support size bounded above by the rank of the coefficient matrix for the equality constraints, which is at most $|A|+1+|B|$. Denote by $M_3$ the row submatrix of $M_2$ corresponding to the nonzero entries of $y^\star$, of which there are at most $|A|+|B|$. Clearly, $M_3^\top y=\1,\1^\top y=\tau$ has a solution where every entry is greater than $0$. Moreover, by design, $M_3$ contains $M_4$ and at most $|A|$ other rows from $M_2$. In particular, $M_3^\top y=\1,\1^\top y=\tau$ has an integral solution, too.

In summary, we found a row submatrix $M_3$ sandwiched between $M_2$ and $M_4$, with at most $|A|+|B|$ rows where $M_3^\top y=\1,\1^\top y=\tau,y\geq \0$ is feasible, and the affine hull $\{y:M_3^\top y=\1, \1^\top y=\tau\}$ contains an integral, hence $p$-adic point. Therefore, by (\cite{Abdi24-TDD}, Lemma 2.2), the system $M_3^\top y=\1,\1^\top y=\tau,y\geq \0$ has a $p$-adic solution, implying in turn that $M_0^\top y = \1, \1^\top y=\tau, y\geq \0$ has a $p$-adic solution with support size at most $|A|+|B|\leq 2|A|$, as required.
\end{proof}

\subsection{Strongly connected orientations of hypergraphs}

\begin{proof}[Proof of \Cref{head-disjoint-CO}]
Let $\tau\geq 2$ be an integer, and let $H=(V,\zE)$ be a $\tau$-uniform hypergraph such that $d_H(X)\geq \tau$ for all $X\subset V,X\neq \emptyset$. Our goal is to find an assignment $\lambda_O\in \cZ$ to every strongly connected orientation $O:\zE\to V$ such that $$
\sum_{O(E) = v} \lambda_O =1 \qquad \forall E\in \zE,\, \forall v\in E,
$$ and $|\{O:\lambda_O\neq 0\}|\leq (\tau-1)|\zE|+1$.

To this end, let $D$ be the bipartite digraph on node set $V\cup \{t_E:E\in \zE\}$ and arc set $\{(v,t_E):v\in E,E\in \zE\}$. In words, we have introduced a sink $t_E$ for every hyperedge $E\in \zE$, and added an arc from every node in $E$ to $t_E$. Thus, every node in $V$ is a source, and every node in $\{t_E:E\in \zE\}$ is a sink of degree $\tau$, as $H$ is $\tau$-uniform.

We have that $d_H(X)\geq \tau$ for all $X\subset V,X\neq \emptyset$, which states equivalently that every dicut of $D$ has size at least $\tau$. In particular, as $\tau\geq 2$, the underlying undirected graph of $D$ is $2$-edge-connected. 

Since the minimum size of a dicut of $D$ is $\tau$, there exists a fractional packing $y^\star$ of dijoins of $D$ of value $\tau$. Since every sink has degree $\tau$, it follows from complementary slackness that if $y^\star_J>0$, then $|J\cap \delta_D(t_E)|=1$ for all $E\in \zE$. Since every arc belongs to a minimum dicut, it follows that $\sum_{J} y^\star_J\1_J=\1$.

Let $\zF:=\{V(D)\setminus t_E:E\in \zE\}$. What we argued above implies that whenever $y^\star_J>0$, then $\1_J\in F:=F(D,\zF)$. In particular, $F\neq \emptyset$, so $(D,\zF)$ is a digraft. Thus, by \Cref{main-digraft}, $F\cap \{0,1\}^{A(D)}$ contains an integral basis $B$ for $\lin(F)$. As $\1=\sum_{J} y^\star_J\1_J\in \lin(F)$, it follows that $\1=\sum_{b\in B}\alpha_b b$ for some integers $\alpha_b,b\in B$.

For every orientation $O:\zE\to V$ of $H$, let $J_O:=\{(O(E),t_E):E\in \zE\}\subseteq A(D)$. It can be readily checked that if $J_O$ is a dijoin of $D$, i.e., if $\1_{J_O}\in F$, then $O$ is a strongly connected orientation of $H$. 

Observe that if $\1_J\in F$, then $J=J_O$ for some orientation $O$ of $H$, which must be strongly connected as argued above. Thus, every point in $F\cap \{0,1\}^{A(D)}$ corresponds to some strongly connected orientation of $H$. For every $b\in B$, let $O_b$ be the corresponding strongly connected orientation in $H$, and let $\lambda_{O_b}:=\alpha_b$; let $\lambda_O:=0$ for all other strongly connected orientations $O$ of $H$. The equality $\sum_{b\in B}\alpha_b b_{(v,t_E)}=1$ for all $(v,t_E)\in A(D)$, implies that $$
\sum_{O(E) = v} \lambda_O =1 \qquad \forall E\in \zE,\, \forall v\in E.
$$ Furthermore, $|\{O:\lambda_O\neq 0\}|\leq |B|\leq |A(D)|-|\zE|+1 = (\tau-1)|\zE|+1$, as desired.
\end{proof}

\subsection{An example and a conjecture}\label{subsec:schrijver-example}

In this subsection, we give a classic example illustrating that three of our theorems are best possible in a certain sense, and also state a conjecture about an extension of \Cref{scr-theorem} to faces of the strongly connected re-orientations polytope where some capacity constraints are also fixed to equality.

Consider the digraph $D=(V,A)$ shown in \Cref{fig:schrijver}, and let $C$ be the set of solid arcs~\cite{Schrijver80}. Denote by $W$ the set of the six sources and sinks of $D$. It can be seen that there are exactly four strengthening sets $J\subseteq C$ of $D$ such that $|J\cap \delta(v)|=1$ for all $v\in W$. That is, the face $F$ of $\scr(D)$ obtained by enforcing the following equalities has exactly four integral vectors:
\begin{align}
	x(\delta^+(v))-x(\delta^-(v)) &= 1-|\delta^-(v)| &\forall v\in W, v \text{ is a source} \label{schrijver-1}\\
	x(\delta^+(V\setminus v))-x(\delta^-(V\setminus v)) &=1-|\delta^-(V\setminus v)| &\forall v\in W, v \text{ is a sink}\label{schrijver-2}\\
	x_a&=0 &\forall a\in A\setminus C. \label{schrijver-3}
\end{align} Observe that the greatest common divisor of the right-hand side values is $1$. Denote by $L\subseteq \cZ^A$ the lattice generated by the four integral vectors in $F$. 

It can be readily checked that while $\1_C\in \cZ^A$ is an integral vector in the linear, in fact conic hull of $L$, it does not belong to $L$ itself, implying that $\ind(L)>1$, and also $F\cap \{0,1\}^A$ is not an IGSC. Thus, \Cref{scr-theorem} does not extend to all faces of $\scr(D)$ where the greatest common divisor of the right-hand sides of the tight constraints is $1$. That said, we suspect the following is true in general.

\begin{CN}\label{scr-main-CN}
	Let $D=(V,A)$ be a digraph whose underlying undirected graph is $2$-edge-connected. Let $F$ be a face of $\scr(D)$ where the greatest common divisor of the right-hand side values of the tight constraints is $1$. 
	Then the following statements hold: \begin{enumerate}
\item The lattice generated by $F\cap \{0,1\}^A$ has a lattice basis contained in $F\cap \{0,1\}^A$.
\item $2x\in \lat\left(F\cap \{0,1\}^A\right)$ for all $x\in \lin(F)\cap \cZ^A$.
\end{enumerate} 
	\end{CN}

Moving on, let $F'$ be the face of $\scr(D)$ obtained by enforcing just \eqref{schrijver-1} and \eqref{schrijver-2}. We know from \Cref{scr-theorem} that $F'\cap \{0,1\}^A$ contains an integral basis. However, as $F\cap \{0,1\}^A$ is not an IGSC, it follows that $F'\cap \{0,1\}^A$ is not an IGSC either; this is because $F$ is a face of $F'$ and so $\cone(F)$ is a face of $\cone(F')$, and being an IGSC is closed under taking faces of the conic hull. Thus, \Cref{scr-theorem} cannot be strengthened to conclude that $F\cap \{0,1\}^A$ is an IGSC.

\begin{figure}[ht]
	\centering
	\includegraphics[scale=0.3]{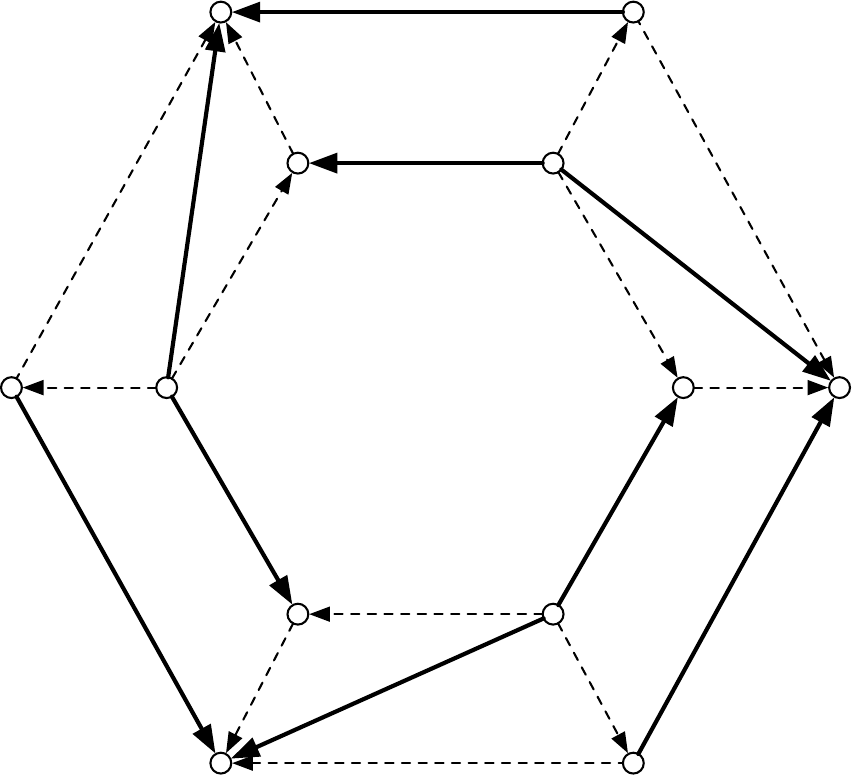}
	\caption{A $0,1$-weighted digraph where the solid and dashed arcs have weights $1$ and $0$, respectively. 	
	}
	\label{fig:schrijver}
\end{figure}

Let us view the arcs inside and outside of $C$ as having capacity $1$ and $0$, respectively. It can be readily checked that every dicut $\delta^+(U)$ has capacity at least $2$, i.e., $|C\cap \delta^+(U)|\geq 2$. Furthermore, every node in $W$ corresponds to a dicut of capacity $2$, and these are the only minimum capacity dicuts. Subsequently, the strengthening sets of $D$ contained in $C$ that intersect every minimum dicut exactly once, correspond precisely to the four $0,1$ vectors in $\scr(D)$ satisfying \eqref{schrijver-1}-\eqref{schrijver-3}. However, as we noted before, $\1_C$ cannot be expressed as an integer linear combination of these four $0,1$ vectors, showing that \Cref{ARF-partition-CO} does not extend to the capacitated setting. 

Finally, \Cref{p-adic-CO} is best possible in the sense that it does not extend to the capacitated setting; let us elaborate. While every dicut of the instance above has capacity at least $2$, the set $C$ cannot be decomposed into $2$ dijoins~\cite{Schrijver80}. What's more striking about this example is that for any prime number $p\neq 2$, there is no assignment of a $p$-adic rational number $\lambda_J$ to every dijoin \emph{contained in $C$} such that $\1^\top \lambda=2$, $\sum_{J} \lambda_J \1_J \leq \1$, and $\lambda\geq 0$. We leave this as an exercise for the reader.

\section*{Acknowledgements}

We would like to thank Krist\'{o}f B\'{e}rczi, Karthekeyan Chandrasekaran, Bertrand Guenin, and Levent Tun\c{c}el for fruitful discussions about various aspects of this work. \b{We would also like to thank anonymous reviewers whose comments vastly improved the exposition of our paper.} This work was supported in part by EPSRC grant EP/X030989/1 and ONR grant N00014-22-1-2528.

\paragraph{Data Availability Statement.} No data are associated with this article. Data sharing is not applicable to this article.

{\small \bibliographystyle{abbrv}\bibliography{references}}

@article{Branden20,
	author = {Petter Br{\"a}nd{\'e}n and June Huh},
	date = {2020/11/1},
	date-added = {2026-01-20 15:32:21 +0000},
	date-modified = {2026-01-20 15:32:28 +0000},
	doi = {10.4007/annals.2020.192.3.4},
	journal = {Annals of Mathematics},
	journal1 = {Annals of Mathematics},
	journal2 = {Annals of Mathematics},
	month = {11},
	number = {3},
	pages = {821--891 },
	title = {Lorentzian polynomials},
	url = {https://doi.org/10.4007/annals.2020.192.3.4},
	volume = {192},
	year = {2020},
	bdsk-url-1 = {https://doi.org/10.4007/annals.2020.192.3.4}}

@book{Schrijver98,
	author = {Alexander Schrijver},
	date-added = {2026-01-20 15:17:19 +0000},
	date-modified = {2026-01-20 15:17:19 +0000},
	month = {April},
	publisher = {Wiley},
	title = {Theory of Linear and Integer Programming},
	year = {1998}}

@article{Haase21,
	author = {Christian Haase and Andreas Paffenholz and Lindsey C. Piechnik and Francisco Santos},
	date-added = {2026-01-20 15:03:29 +0000},
	date-modified = {2026-01-20 15:07:53 +0000},
	journal = {Memoirs of the American Mathematical Society},
	number = {1321},
	pages = {v+83},
	title = {Existence of unimodular triangulations --- positive results},
	volume = {270},
	year = {2021}}

@book{Dummit04,
	author = {David S. Dummit and Richard M. Foote},
	date-added = {2024-10-10 17:30:40 +0100},
	date-modified = {2024-10-10 17:36:06 +0100},
	publisher = {John Wiley and Sons, Inc.},
	title = {Abstract Algebra, 3rd Edition},
	year = {2004}}

@article{Abdi-DLP,
	abstract = {A rational number is dyadic if it has a finite binary representation {\$}{\$}p/2\^{}k{\$}{\$}, where p is an integer and k is a nonnegative integer. Dyadic rationals are important for numerical computations because they have an exact representation in floating-point arithmetic on a computer. A vector is dyadic if all its entries are dyadic rationals. We study the problem of finding a dyadic optimal solution to a linear program, if one exists. We show how to solve dyadic linear programs in polynomial time. We give bounds on the size of the support of a solution as well as on the size of the denominators. We identify properties that make the solution of dyadic linear programs possible: closure under addition and negation, and density, and we extend the algorithmic framework beyond the dyadic case.},
	author = {Abdi, Ahmad and Cornu{\'e}jols, G{\'e}rard and Guenin, Bertrand and Tun{\c c}el, Levent},
	date = {2024/10/03},
	date-added = {2024-10-10 13:58:02 +0100},
	date-modified = {2024-10-10 13:58:07 +0100},
	doi = {10.1007/s10107-024-02146-4},
	id = {Abdi2024},
	isbn = {1436-4646},
	journal = {Mathematical Programming},
	title = {Dyadic linear programming and extensions},
	url = {https://doi.org/10.1007/s10107-024-02146-4},
	year = {2024},
	bdsk-url-1 = {https://doi.org/10.1007/s10107-024-02146-4}}

@article{Carvalho02,
	abstract = {This is a sequel to our papers (M. H. de Carvalho, C. L. Lucchesi, and U. S. R. Murty, 1999, Combinatorica19, 151--174; 2001, J. Combin. Theory, Ser. B; and 2001, J. Combin. Theory, Ser. B). A Petersen brick is a graph whose underlying simple graph is isomorphic to the Petersen graph. For a matching covered graph G,  b(G) denotes the number of bricks of G, and p(G) denotes the number of Petersen bricks of G. An ear decomposition of G is optimal if, among all ear decompositions of G, it uses the least possible number of double ears. Here we make use of the main theorem in (2001, J. Combin. Theory, Ser. B) to prove that the number of double ears in an optimal ear decomposition of a matching covered graph G is b(G)+p(G). In particular, if G is a brick that is not a Petersen brick, then there is an ear decomposition of G with exactly one double ear. This answers a question raised by D. Naddef and W. R. Pulleyblank (1982, Ann. Discrete Math.16, 241--260). Using this theorem, we give an alternative proof of L. Lov{\'a}sz' matching lattice characterization theorem (1987, J. Combin. Theory, Ser. B43, 187--222). We also show that for any matching covered graph G, there is a basis for the matching lattice of G consisting of incidence vectors of perfect matchings of G. This answers a question raised by U. S. R. Murty (1994, ``The Matching Lattice and Related Topics,''Technical Report, University of Waterloo). In fact, we show that such a basis may be obtained from the incidence vectors of perfect matchings associated with optimal ear decompositions of G.},
	author = {de Carvalho, Marcelo H. and Lucchesi, Cl{\'a}udio L. and Murty, U. S. R.},
	date = {2002/05/01/},
	date-added = {2024-10-10 13:43:11 +0100},
	date-modified = {2024-10-10 13:43:18 +0100},
	doi = {https://doi.org/10.1006/jctb.2001.2090},
	isbn = {0095-8956},
	journal = {Journal of Combinatorial Theory, Series B},
	number = {1},
	pages = {59--93},
	title = {Optimal Ear Decompositions of Matching Covered Graphs and Bases for the Matching Lattice},
	url = {https://www.sciencedirect.com/science/article/pii/S0095895601920908},
	volume = {85},
	year = {2002},
	bdsk-url-1 = {https://www.sciencedirect.com/science/article/pii/S0095895601920908},
	bdsk-url-2 = {https://doi.org/10.1006/jctb.2001.2090}}

@article{Lovasz87,
	abstract = {The matching polyhedron, i.e., the convex hull of (incidence vectors of) perfect matchings of a graph was characterized by Edmonds; this result is the key to a large part of polyhedral combinatorics and is used in many combinatorial algorithms. The linear hull of perfect matchings was characterized by Naddef, and by Edmonds, Lov{\'a}sz, and Pulleyblank. In this paper we describe the lattice generated by these vectors, i.e., the set of all integer linear combinations of perfect matchings. It turns out that the Petersen graph is, in a sense, the only difficult example. Our results also imply a characterization of the linear hull of perfect matchings over fields of characteristic different from 0. The main method is a decomposition theory developed by Kotzig, Lov{\'a}sz, and Plummer, which breaks down every graph into a number of graphs called bricks with very good matching properties. The number of Petersen graphs among these bricks will turn out to be an essential parameter of the matching lattice. Some refinements of the decomposition theory are also given. Among others, we show that the list of bricks obtained during the decomposition procedure is independent of the special choices made during the procedure.},
	author = {Lov{\'a}sz, L{\'a}szl{\'o}},
	date = {1987/10/01/},
	date-added = {2024-10-10 13:41:56 +0100},
	date-modified = {2024-10-10 13:42:02 +0100},
	doi = {https://doi.org/10.1016/0095-8956(87)90021-9},
	isbn = {0095-8956},
	journal = {Journal of Combinatorial Theory, Series B},
	number = {2},
	pages = {187--222},
	title = {Matching structure and the matching lattice},
	url = {https://www.sciencedirect.com/science/article/pii/0095895687900219},
	volume = {43},
	year = {1987},
	bdsk-url-1 = {https://www.sciencedirect.com/science/article/pii/0095895687900219},
	bdsk-url-2 = {https://doi.org/10.1016/0095-8956(87)90021-9}}

@article{Edmonds82,
	abstract = {The number of linearly independent perfect matchings of a graph ---or, equivalently, the dimension of the perfect matching polytope ---is determined in various senses. First it is shown that the fact that every linear objective function can be optimized over the perfect matchings in polynomial time implies the existence of a polynomial-time algorithm to determine the dimension of this set. This observation also yields polynomial algorithms to determine, among others, the number of linearly independent common bases of two matroids and the number of linearly independent maximum stable sets in claw-free or perfect graphs. For the case of perfect matchings, Naddef's minimax theorem for the dimension of the perfect matching polytope is strengthened and it is shown how the decomposition theory of matchings in graphs can be applied to derive a particularly simple formula for this dimension. This formula is based upon the number of constituents of a certain decomposition of the graph which we call a brick decomposition. Finally, these results are applied to obtain a description of the facets of the perfect matching polytope.},
	author = {Edmonds, J. and Pulleyblank, W. R. and Lov{\'a}sz, L.},
	date = {1982/09/01},
	date-added = {2024-10-10 13:40:57 +0100},
	date-modified = {2024-10-10 13:41:06 +0100},
	doi = {10.1007/BF02579233},
	id = {Edmonds1982},
	isbn = {1439-6912},
	journal = {Combinatorica},
	number = {3},
	pages = {247--274},
	title = {Brick decompositions and the matching rank of graphs},
	url = {https://doi.org/10.1007/BF02579233},
	volume = {2},
	year = {1982},
	bdsk-url-1 = {https://doi.org/10.1007/BF02579233}}

@inproceedings{Cornuejols24,
	abstract = {In a digraph, a dicut is a cut where all the arcs cross in one direction. A dijoin is a subset of arcs that intersects each dicut. Woodall conjectured in 1976 that in every digraph, the minimum size of a dicut equals to the maximum number of disjoint dijoins. However, prior to our work, it was not even known whether at least 3 disjoint dijoins exist in an arbitrary digraph whose minimum dicut size is sufficiently large. By building connections with nowhere-zero (circular) k-flows, we prove that every digraph with minimum dicut size τ contains τk disjoint dijoins if the underlying undirected graph admits a nowhere-zero (circular) k-flow. The existence of nowhere-zero 6-flows in 2-edge-connected graphs (Seymour 1981) directly leads to the existence of τ6 disjoint dijoins in a digraph with minimum dicut size τ, which can be found in polynomial time as well. The existence of nowhere-zero circular 2p+1p-flows in 6p-edge-connected graphs (Lov\'{a}sz et al. 2013) directly leads to the existence of τp2p+1 disjoint dijoins in a digraph with minimum dicut size τ whose underlying undirected graph is 6p-edge-connected.},
	address = {Berlin, Heidelberg},
	author = {Cornu\'{e}jols, G\'{e}rard and Liu, Siyue and Ravi, R.},
	booktitle = {Integer Programming and Combinatorial Optimization: 25th International Conference, IPCO 2024, Wroclaw, Poland, July 3--5, 2024, Proceedings},
	date-added = {2024-10-09 16:56:20 +0100},
	date-modified = {2024-10-09 16:58:00 +0100},
	doi = {10.1007/978-3-031-59835-7_6},
	isbn = {978-3-031-59834-0},
	keywords = {Woodall's conjecture, Nowhere-zero flow, Approximation algorithm},
	location = {Wroc\l{}aw, Poland},
	numpages = {14},
	pages = {71--84},
	publisher = {Springer-Verlag},
	title = {Approximately Packing Dijoins via Nowhere-Zero Flows},
	url = {https://doi.org/10.1007/978-3-031-59835-7_6},
	year = {2024},
	bdsk-url-1 = {https://doi.org/10.1007/978-3-031-59835-7_6}}

@book{Murota03,
	author = {Kazuo Murota},
	date-added = {2024-08-28 15:53:13 +0100},
	date-modified = {2024-08-28 15:55:22 +0100},
	publisher = {Society for Industrial and Applied Mathematics, Philadelphia},
	title = {Discrete Convex Analysis},
	year = {2003}}

@article{Gerards87,
	abstract = {We prove that any totally dual integral description of a full-dimensional polyhedron is locally strongly unimodular in every vertex.},
	author = {Gerards, Albertus M. H. and Seb{\H o}, Andr{\'a}s},
	date = {1987/07/01},
	date-added = {2024-08-28 13:33:46 +0100},
	date-modified = {2024-08-28 13:33:51 +0100},
	doi = {10.1007/BF02591852},
	id = {Gerards1987},
	isbn = {1436-4646},
	journal = {Mathematical Programming},
	number = {1},
	pages = {69--73},
	title = {Total dual integrality implies local strong unimodularity},
	url = {https://doi.org/10.1007/BF02591852},
	volume = {38},
	year = {1987},
	bdsk-url-1 = {https://doi.org/10.1007/BF02591852}}

@article{Abdi24-TDD,
	abstract = {A vector is dyadic if each of its entries is a dyadic rational number, i.e. of the form {\$}{\$}{$\backslash$}frac{\{}a{\}}{\{}2\^{}k{\}}{\$}{\$}for some integers a, k with {\$}{\$}k{$\backslash$}ge 0{\$}{\$}. A linear system {\$}{\$}Ax{$\backslash$}le b{\$}{\$}with integral data is totally dual dyadic if whenever {\$}{\$}{$\backslash$}min {$\backslash$}{\{}b\^{}{$\backslash$}top y:A\^{}{$\backslash$}top y=w,y{$\backslash$}ge {$\backslash$}textbf{\{}0{\}}{$\backslash$}{\}}{\$}{\$}for w integral, has an optimal solution, it has a dyadic optimal solution. In this paper, we study total dual dyadicness, and give a co-NP characterization of it in terms of dyadic generating sets for cones and subspaces, the former being the dyadic analogue of Hilbert bases, and the latter a polynomial-time recognizable relaxation of the former. Along the way, we see some surprising turn of events when compared to total dual integrality, primarily led by the density of the dyadic rationals. Our study ultimately leads to a better understanding of total dual integrality and polyhedral integrality. We see examples from dyadic matroids, T-joins, cycles, and perfect matchings of a graph.},
	author = {Abdi, Ahmad and Cornu{\'e}jols, G{\'e}rard and Guenin, Bertrand and Tun{\c c}el, Levent},
	date = {2024/07/01},
	date-added = {2024-08-28 13:32:34 +0100},
	date-modified = {2024-08-28 13:32:46 +0100},
	doi = {10.1007/s10107-023-01967-z},
	id = {Abdi2024},
	isbn = {1436-4646},
	journal = {Mathematical Programming},
	number = {1},
	pages = {125--143},
	title = {Total dual dyadicness and dyadic generating sets},
	url = {https://doi.org/10.1007/s10107-023-01967-z},
	volume = {206},
	year = {2024},
	bdsk-url-1 = {https://doi.org/10.1007/s10107-023-01967-z}}

@unpublished{Schrijver-note,
	author = {A. Schrijver},
	date-added = {2024-08-27 17:42:45 +0100},
	date-modified = {2024-08-27 17:45:51 +0100},
	note = {Accessed online at https://homepages.cwi.nl/~lex/files/woodall.pdf},
	title = {Observations on {W}oodall's conjecture}}

@book{Martinet03,
	author = {Jacques Martinet},
	date-added = {2024-08-23 11:43:34 +0100},
	date-modified = {2024-08-23 11:43:34 +0100},
	publisher = {Springer-Verlag Berlin Heidelberg},
	title = {Perfect Lattices in Euclidean Spaces},
	year = {2003}}

@unpublished{Guenin24+,
	author = {Bertrand Guenin and Steven Hwang},
	date-added = {2024-08-23 10:59:38 +0100},
	date-modified = {2024-08-23 11:00:19 +0100},
	note = {Submitted},
	title = {Dyadic packing of dijoins},
	year = {2024}}

@article{Abdi23-dijoins,
	author = {Ahmad Abdi and G{\'e}rard Cornu{\'e}jols and Michael Zlatin},
	date-added = {2022-11-25 11:34:35 +0000},
	date-modified = {2024-08-23 10:01:44 +0100},
	journal = {SIAM Journal on Discrete Mathematics},
	number = {4},
	pages = {2417-2461},
	title = {On packing dijoins in digraphs and weighted digraphs},
	volume = {37},
	year = {2023}}

@book{Frank11,
	author = {Andr{\'a}s Frank},
	date-added = {2022-01-24 13:25:19 +0000},
	date-modified = {2022-01-24 13:27:06 +0000},
	publisher = {Oxford University Press},
	series = {Oxford Lecture Series in Mathematics and Its Applications},
	title = {Connections in Combinatorial Optimization},
	year = {2011}}

@article{deWerra71,
	author = {D. de Werra},
	date-added = {2022-01-24 11:35:59 +0000},
	date-modified = {2022-01-24 11:37:57 +0000},
	journal = {R.A.I.R.O.},
	number = {R-3},
	pages = {3-8},
	title = {Equitable colorations of graphs},
	volume = {5},
	year = {1971}}

@book{LP09,
	author = {L{\'a}szl{\'o} Lov{\'a}sz and Michael D. Plummer},
	date-added = {2021-12-16 15:36:05 +0000},
	date-modified = {2021-12-16 15:37:56 +0000},
	publisher = {AMS Chelsea Publishing},
	title = {Matching Theory},
	volume = {367},
	year = {2009}}

@inproceedings{Edmonds77,
	author = {Edmonds, Jack and Giles, Rick},
	booktitle = {Studies in integer programming ({P}roc. {W}orkshop, {B}onn, 1975)},
	date-added = {2021-09-09 15:15:38 -0400},
	date-modified = {2022-01-16 15:10:07 +0300},
	mrclass = {05C20 (90C05)},
	mrnumber = {0460169},
	mrreviewer = {D. H. Younger},
	pages = {185--204. Ann. of Discrete Math., Vol. 1},
	title = {A min-max relation for submodular functions on graphs},
	year = {1977}}

@article{Lucchesi78,
	author = {Lucchesi, C. L. and Younger, D. H.},
	date-added = {2021-09-09 15:15:38 -0400},
	date-modified = {2022-01-25 13:24:24 +0000},
	doi = {10.1112/jlms/s2-17.3.369},
	fjournal = {Journal of the London Mathematical Society. Second Series},
	issn = {0024-6107},
	journal = {J. London Math. Soc. (2)},
	mrclass = {05C20},
	mrnumber = {500618},
	mrreviewer = {Jean H. Bevis},
	number = {3},
	pages = {369--374},
	title = {A minimax theorem for directed graphs},
	url = {https://doi-org.proxy.library.cmu.edu/10.1112/jlms/s2-17.3.369},
	volume = {17},
	year = {1978},
	bdsk-url-1 = {https://doi-org.proxy.library.cmu.edu/10.1112/jlms/s2-17.3.369},
	bdsk-url-2 = {https://doi.org/10.1112/jlms/s2-17.3.369}}

@incollection{Woodall78,
	author = {Woodall, D.R.},
	booktitle = {Theory and Applications of Graphs.},
	date-added = {2021-09-09 15:15:38 -0400},
	date-modified = {2021-09-09 21:15:38 +0200},
	editor = {Alavi, Y. and Lick, D.R.},
	publisher = {Springer, Berlin, Heidelberg},
	series = {Lecture Notes in Mathematics},
	title = {Menger and {K}{\"o}nig systems},
	volume = {642},
	year = {1978}}

@article{Schrijver80,
	author = {Schrijver, A.},
	date-added = {2021-09-09 15:15:38 -0400},
	date-modified = {2022-01-16 15:12:27 +0300},
	doi = {10.1016/0012-365X(80)90057-6},
	fjournal = {Discrete Mathematics},
	issn = {0012-365X},
	journal = {Discrete Math.},
	mrclass = {05C70 (90C10)},
	mrnumber = {592858},
	mrreviewer = {Edward Minieka},
	number = {2},
	pages = {213--215},
	title = {A counterexample to a conjecture of {E}dmonds and {G}iles},
	url = {https://doi-org.proxy.library.cmu.edu/10.1016/0012-365X(80)90057-6},
	volume = {32},
	year = {1980},
	bdsk-url-1 = {https://doi-org.proxy.library.cmu.edu/10.1016/0012-365X(80)90057-6},
	bdsk-url-2 = {https://doi.org/10.1016/0012-365X(80)90057-6}}

@article{Fujishige84,
	author = {Fujishige, S.},
	date-added = {2021-09-09 15:15:38 -0400},
	date-modified = {2021-09-09 15:15:38 -0400},
	journal = {Math. Programming},
	pages = {125-141},
	title = {Structures of polyhedra determined by submodular functions on crossing families},
	volume = {29},
	year = {1984}}

@book{Cornuejols01,
	author = {G{\'e}rard Cornu{\'e}jols},
	date-added = {2021-09-09 15:15:38 -0400},
	date-modified = {2022-02-02 12:54:42 +0000},
	publisher = {Society for Industrial and Applied Mathematics},
	title = {Combinatorial Optimization: Packing and Covering},
	year = {2001}}

@book{Schrijver03,
	author = {A. Schrijver},
	date-added = {2021-09-09 15:15:38 -0400},
	date-modified = {2021-09-09 15:15:38 -0400},
	publisher = {Springer, Berlin, Heidelberg},
	title = {Combinatorial Optimization. Polyhedra and Efficiency.},
	year = {2003}}

\end{document}